\newtheorem{theorem}{Theorem}[section]
\newtheorem{proposition}[theorem]{Proposition}
\newtheorem{corollary}[theorem]{Corollary}
\newtheorem{lemma}[theorem]{Lemma}
\newtheorem{remark}[theorem]{Remark}
\newtheorem{example}[theorem]{Example}
\newcommand{\A}{\mathcal{A}}
\newcommand{\C}{\mathcal{C}}
\newcommand{\I}{\mathrm{I}}
\newcommand{\ZZ}{\mathcal{Z}}
\newcommand{\U}{\mathcal{U}}
\newcommand{\B}{\mathbf{B}}
\newcommand{\T}{\mathrm{T}}
\newcommand{\s}{\mathrm{S}}
\newcommand{\HH}{\mathbb{H}}
\newcommand{\DD}{\mathbb{D}}
\newcommand{\Z}{\mathbb{Z}}
\newcommand{\BB }{\mathcal{B}}
\newcommand{\M}{\mathcal{M}}
\newcommand{\N}{\mathbb{N}}
\begin{document}
\title{Higher cohomologies of commutative monoids}
\author{M. Calvo-Cervera}
\author{A.M. Cegarra}
\address{Departamento de \'Algebra,  Universidad de
Granada, 18071 Granada, Spain}
\email{acegarra@ugr.es}
\email{mariacc@ugr.es}
\thanks{This work has been supported by DGI
of Spain, Project MTM2011-22554. Also, the first author by FPU
grant FPU12-01112.}

 \subjclass[2000]{20M14,20M50, 18B40, 18D10}
 \keywords{monoid, cohomology, bar construction, symmetric monoidal category.}

\maketitle

\begin{abstract}
Extending Eilenberg-Mac Lane's methods, higher level cohomologies for commutative monoids are introduced and studied. Relationships with pre-existing theories (Leech, Grillet, etc.) are stated. The paper includes a cohomological classification for symmetric monoidal groupoids and explicit computations for cyclic monoids.
\end{abstract}

\section{Introduction and summary}

In \cite[Chapter X, \S 12]{maclane}, Mac Lane explains how to define, for each integer $r\geq 0$,
the $r$th level cohomology groups of a (skew) commutative DGA-algebra (differential graded augmented algebra) over a commutative ring $K$, say $D$:
Take the commutative DGA-algebra $\B^{r}\!(D)$, obtained by iterating $r$ times the  reduced bar construction on $D$, and then, for any $K$-module $A$, define
$$
H^n(D,r;A)=H^n\big(\mathrm{Hom}_K(\B^{r}\!(D),A\big), \hspace{0.4cm} n=0,1,\ldots,
$$
where $\mathrm{Hom}_K(\B^{r}\!(D),A)$ is the cochain complex obtained by applying the functor $\mathrm{Hom}_K(-,A)$ to the underlying chain complex of $K$-modules  $\B^{r}\!(D)$.

This process may be applied, for example, when $D=\Z G$ is the group ring of an abelian group $G$, regarded as a trivially graded DGA-ring,
augmented by $\alpha:\Z G\to \Z$ with $\alpha(x)=1$ for all $x\in G$. Thus, the Eilenberg-Mac Lane $r$th level cohomology groups of the abelian group $G$ with coefficients in an abelian group $A$ are defined by \begin{equation}\label{eqint1}H^n(G,r;A)=H^n(\Z G,r;A).\end{equation}
In particular, the first level cohomology groups $H^n(G,1;A)=H^n(G,A)$ are the ordinary cohomology groups of $G$ with coefficients in the trivial $G$-module $A$ \cite[Chapter IV, Corollary 5.2]{maclane}.
These $r$th level cohomology groups of abelian groups were studied primarily with interest in Algebraic Topology. For instance, they have a topological interpretation in terms of the Eilenberg-Mac Lane spaces $K(G,r)$,
owing to the isomorphisms  $H^n(G,r;A)\cong H^n\big(K(G,r),A\big)$ \cite[Theorem 20.3]{E-M-I}.
However, they early found application in solving purely algebraic problems. For example,
we could recall that central group extensions of $G$ by $A$ are classified by cohomology classes in $H^2(G,1;A)$,
while abelian group extensions of $G$ by $A$ are classified by cohomology classes in $H^3(G,2;A)$ \cite[\S 26, (26.2), (26.3)]{E-M-II};
or that second level cohomology classes in $H^4(G,2;A)$ classify braided monoidal categorical groups  \cite[Theorem 3.3]{j-s}, while third
level cohomology classes in  $H^5(G,3;A)$ classify Picard categories \cite[II, Proposition 5]{sinh}.

Here, we  introduce a generalization of Eilenberg-Mac Lane's theory for abelian groups to commutative monoids.
The obtained  $r$th level cohomology groups of a commutative monoid $M$, denoted by $$H^n(M,r;\A),$$ enjoy many desirable properties,
whose study this work and its companion paper \cite{c-c-3} are mainly dedicated to.
In our development,  the role of coefficients
is now played by abelian group objects $\A$ in the comma category of commutative monoids over $M$.
We call them {\em $\HH M$-modules} since, as a result by Grillet \cite[Chapter XII, \S 2]{grillet},
they are the same as abelian group valued functors $\A:\HH M\to \mathbf{Ab}$ on the small category $\HH M$,
whose set of objects is $M$ and set of arrows $M\times M$, with $(x,y):x\to xy$.

For any given commutative monoid $M$, the category of  chain complexes of  $\HH M$-modules is an abelian category.
In Section \ref{sect2}, we show that it is also a symmetric monoidal category, with a distributive tensor product $\A\otimes_{\HH M}\BB$, and whose unit object is $\Z$, the concentrated in degree zero complex defined by the constant $\HH M$-module given by the abelian group $\Z$ of integers. Hence, commutative {\em DGA-algebras over $\HH M$} arise as internal commutative monoids $\A$ in the symmetric monoidal category of complexes of $\HH M$-modules, endowed with a morphism of internal monoids $\A\to \Z$.

Quite similarly as for ordinary commutative DGA-algebras over a commutative ring, a reduced bar construction $\A\mapsto \B(\A)$ works on these DGA-algebras over $\HH M$. Thus, $\B(\A)$ is obtained  from $\A$ by first totalizing the double complex of $\HH M$-modules
$$
\xymatrix{\bigoplus_{p\geq 0}\A/\Z\otimes_{\HH\! M}\overset{(p\text{ factors })}\cdots
\otimes_{\HH\! M}\A/\Z},
$$
and then enriching the (suitably graded) totalized complex of $\HH M$-modules with a multiplicative structure by a shuffle product. We do this in
Section \ref{sect3}, where we also define, for any $\HH M$-module $\BB$, the $r$th level cohomology groups of $\A$ with coefficients in $\BB$ by
\begin{equation*}
H^n(\A,r;\BB)=H^n\big(\mathrm{Hom}_{\HH\! M}(\B^{r}\!(\A),\BB)\big), \hspace{0.4cm} n=0,1,\ldots \, .
\end{equation*}

Next, in Section \ref{sect4} we briefly study  free $\HH M$-modules.
These arise as a left adjoint construction to a forgetful functor from the category of $\HH M$-modules to the comma category  of sets over the underlying
set of $M$.
In particular, in Section \ref{sect5} we introduce
the free $\HH M$-module on the identity map $id_M:M\to M$, denoted by $\ZZ M$.
This becomes a (trivially graded) commutative DGA-algebra over $\HH M$ and then, for each integer positive $r$, we define
the $r$th level cohomology groups of a commutative monoid $M$ with coefficients in a $\HH M$-module $\A$ by
\begin{equation}\label{eqint2}
H^n(M,r;\A)=H^n(\ZZ M,r;\A).
\end{equation}

When $M=G$ is an abelian group, for any integer $r\geq 0$, $\B^{r}\!(\ZZ G)$ is isomorphic to the constant DGA-algebra over $\HH G$ defined by the Eilenberg-Mac Lane DGA-ring $\B^{r}\!(\Z G)$ ($=A_{N}(G,r)$ in \cite[\S 14]{E-M-I}). Hence, for any abelian group $A$, viewed as a constant $\HH G$-module, the cohomology groups $H^n(G,r;A)$ defined as in \eqref{eqint2} are naturally isomorphic to those by Eilenberg and Mac Lane in \eqref{eqint1}, which, recall, compute the cohomology groups of the spaces $K(G,r)$ as $H^n(G,r;A)\cong H^n\big(K(G,r),A\big)$. In the companion paper \cite{c-c-3} we show that, for any commutative monoid $M$, there are isomorphisms $$H^n(M,r;\A) \cong H^n(\overline{W}^{\,r}\! M,\A),$$  where $H^n(\overline{W}^{\,r}\! M,\A)$, $n\geq 0$, are Gabriel-Zisman cohomology groups \cite[Appendix II]{G-Z} of the underlying simplicial set of the simplicial monoid $\overline{W}^{\,r}\! M$, obtained by iterating the $\overline{W}$ construction on the constant simplicial monoid defined by $M$.

An analysis of the complex $\B(\ZZ M)$, for $M$ any commutative monoid,
leads us in Proposition \ref{h1l} to identify the cohomology groups $H^n(M,1;\A)$ with the standard cohomology groups  $ H^n_{^\mathrm{L}}(M,\A)$  by Leech \cite{leech}. Recall that Leech cohomology groups of a (not necessarily commutative) monoid $M$ take coefficients in abelian group valued functors on the category $\DD M$, whose objects are the elements of $M$ and arrows triples $(x,y,z):y\to xyz$. When the monoid $M$ is commutative, there is a natural functor $\DD M\to \HH M$ which is the identity on objects and carries a morphism $(x,y,z)$ of $\DD M$ to the morphism $(y,xz)$ of $\HH M$. Via this functor, every $\HH M$-module $\A$ is regarded as a $\DD M$-module and we prove that,
for any commutative monoid $M$ and $\HH M$-module $\A$, there are natural isomorphisms
\begin{equation*}\label{comleech}H^n(M,1;\A)\cong H^n_{^{_\mathrm{L}}}(M,\A),  \hspace{0.4cm} n=0,1,\ldots \, .\end{equation*}

For any $r\geq 2$, we show  explicit descriptions of the complexes $\B^{r}\!(\ZZ M)$
truncated at dimensions $\leq r+3$, which are useful both for theoretical and computational interests concerning the cohomology groups $H^n(M,r;\A)$ for $n\leq r+2$. Some conclusions here summarize  as follows:
\begin{itemize}
\item $H^0(M,r;\A)\cong H^0(M,1;\A)\cong H^0_{^\mathrm{L}}(M,\A)\cong \A(e)$,
\end{itemize}
where $\A(e)$ is the abelian group attached by $\A$ at the identity $e$ of the monoid.
\begin{itemize}
\item $H^n(M,r;\A)= 0$,  for  $0<n<r$,
\vspace{0.2cm}
\item $H^r(M,r;\A)\cong H^1(M,1;\A)\cong H^1_{^\mathrm{L}}(M,\A)  \cong H^1_{^{_\mathrm{ G}}}(M,\A)$,

\vspace{0.2cm}
\item $H^{r+1}(M,r;\A)\cong H^{3}(M,2;\A)\cong H^2_{^{_\mathrm{ G}}}(M,\A)$.
\end{itemize}
where $H^n_{^\mathrm{G}}(M,\A)$ denotes the $n$-th cohomology group by Grillet \cite{grillet91, grillet}.

\begin{itemize}
\item $H^{4}(M,2;\A)\cong  H^3_{^{_\mathrm{C}}}(M,\A)$,
\end{itemize}
where $H^3_{^{_\mathrm{C}}}(M,\A)$ is the  third commutative cohomology group by the authors in \cite{c-c-1}.
\begin{itemize}
\vspace{0.1cm}
\item $H^{r+2}(M,r;\A)\cong H^5(M,3;\A)$,  for  $r\geq 3$.
\end{itemize}
\begin{itemize}
\item There are natural inclusions $H^3_{^{_\mathrm{G}}}(M,\A)\subseteq H^5(M,3;\A)\subseteq H^3_{^{_\mathrm{C}}}(M,\A)$.
\end{itemize}

Most of these cohomology groups above have known  algebraic interpretations. For example,
elements of $H^1(M,1;\A)=H^1_{^{_\mathrm{ L}}}(M,\A)$ are {\em derivations} \cite[Chapter II, 2.7]{leech}.
Cohomology classes in  $H^2(M,1;\A)=H^2_{^{_\mathrm{L}}}(M,\A)$ are isomorphism classes of {\em group coextensions}
\cite[Chapter V, \S2]{leech} (or \cite[Theorem 2]{wells}), while elements of $H^3(M,2;\A)=H^2_{^\mathrm{G}}(M,\A)$ classify {\em abelian group coextensions} \cite[Chapter V, \S 4]{grillet}.
Cohomology classes in $H^3(M,1;\A)=H^3_{^{_\mathrm{ L}}}(M,\A)$ are equivalence classes of  {\em monoidal abelian groupoids} \cite[Theorem 4.3]{c-c-h},
elements of $H^4(M,2;\A)=H^3_{^{_\mathrm{C}}}(M,\A)$ are equivalence classes of {\em braided monoidal abelian groupoids} \cite[Theorem 4.5]{c-c-1}, and elements of $H^3_{^{_\mathrm{G}}}(M,\A)$ are equivalence classes of {\em strictly commutative monoidal abelian groupoids} \cite[Theorem 3.1]{c-c-h2}.
Thus, among them, only the cohomology groups $H^5(M,3;\A)$ are pending of interpretation, and we solve this in Section \ref{sect6}.
Here we give a natural interpretation of the cohomology classes in $H^5(M,3;\A)$ in terms of equivalence
classes of {\em symmetric monoidal abelian groupoids}, that is, groupoids $\M$,  whose isotropy groups $\mathrm{Aut}_\M(x)$
are all abelian, endowed with a monoidal structure by a tensor functor $\otimes:\M\times \M\to \M$,
a unit object $\I$, and coherent associativity, unit and commutativity constraints
$\boldsymbol{a}:(x\otimes y)\otimes z \cong x\otimes(y\otimes z)$,
 $\boldsymbol{l}:\I \otimes x \cong x$, and $\boldsymbol{c}:x\otimes
y\cong y\otimes x$ which satisfy the symmetry condition $\boldsymbol{c}^2=id$. The classification of symmetric monoidal abelian groupoids we give extends that, above refereed, by Sinh  in \cite[II, Proposition 5]{sinh} for Picard categories.

In last Section \ref{sect7}, we compute the cohomology groups $H^n(M,r;\A)$, for $n\leq r+2$, when $M$ is any cyclic monoid.

\section{Commutative differential graded algebras over $\HH M$}\label{sect2}
Throughout this paper $M$ denotes a {\em commutative multiplicative
 monoid}, whose unit is $e$.

 As noted in the introduction, in \cite[Chapter XII, \S 2]{grillet} Grillet observes that the category of abelian group objects in the slice category of
commutative monoids over $M$, $\mathbf{CMon}\!\downarrow\!\!_M$, is
equivalent to the category of abelian group valued functors
 on the small category $\HH M$, whose  object set is $M$ and arrow set $M\times M$, where $(x,y):x\to xy$. Composition is given by $(xy,z)(x,y)=(x,yz)$, and
the identity of an object $x$ is $(x,e)$. The category of functors
from $\HH M $ into the category of abelian groups will be denoted by
$$
\HH M\text{-}\mathrm{Mod}
$$
and called the category of {\em $\HH M $-modules}. An $\HH M $-module, say  $ \A :\HH M \to \mathbf{Ab}$, then consists of abelian groups $ \A(x)$, one for each $x\in M$, and homomorphisms
$y_*: \A(x)\to  \A(xy)$, one for each $x,y\in M$, such that, for any $x,y,z\in M$,
$y_*z_*=(yz)_*:  \A(x)\to  \A(xyz)$ and, for any $x\in M$,
$e_*=id_{ \A(x)}: \A(x)\to \A(xe)= \A(x)$.

 For instance, let \begin{equation}\label{z}\Z:\HH M\to \mathbf{Ab}, \hspace{0.4cm}x\mapsto \Z(x)=\Z\{ x\},\end{equation} be the  $\HH M$-module
 which associates to each element $x\in M$ the free abelian
group on the generator $x$, and to each pair $(x,y)$ the isomorphism of abelian groups
$y_*: \Z(x)\to \Z( xy)$
given on the generator by $y_*x=xy$. This is
isomorphic to the  $\HH M$-module defined by the constant functor
on $\HH M$ which associates the abelian group of integers $\Z$ to any $x\in M$.

For two $\HH M $-modules $ \A $ and $ \BB $, a morphism between them (i.e., a natural transformation) $f: \A \to  \BB $ consists of homomorphisms $f_x: \A(x)\to  \BB(x)$, such that, for any $x,y\in M$, the square below commutes.
$$
\xymatrix{ \A(x)\ar[r]^{f_x}\ar[d]_{y_*}& \BB(x)\ar[d]^{y_*}\\  \A(xy)\ar[r]^{f_{xy}}& \BB(xy)}
$$

The category of  $\HH\!M$-modules is abelian and we refer to \cite[Chapter  IX, \S 3]{maclane} for details.
Recall that the set of morphisms between two  $\HH\!M$-modules $ \A $ and $ \BB $, denoted by $\mathrm{Hom}_{\HH M}( \A , \BB )$,
is an abelian group by objectwise addition, that is, if $f,g: \A \to  \BB $ are morphisms, then  $f+g: \A \to  \BB $
is defined by setting $(f+g)_x=f_x+g_x$, for each $x\in M$. The zero  $\HH M$-module is the constant functor
$0:\HH M\to \mathbf{Ab}$ defined by the trivial abelian group $0$, and the direct sum of two $\HH M$-modules $ \A $ and $ \BB $ is given by taking direct sum at each object, that is, $( \A \oplus \BB )(x)= \A(x)\oplus  \BB(x)$. Similarly, all limits and colimits (in
particular, kernels, images, cokernels, etc. ) in the category $\HH M\text{-}\mathrm{Mod}$
 are pointwise constructed.

\begin{remark}\label{eqabhhm}{\em Every abelian group $A$ defines a {\em constant} $\HH M$-module, equally denoted by $A$, such
that $A(x)=A$ and $y_*=id_A:A(x)\to A(xy)$, for any $x,y\in M$. In this way, the category of abelian groups becomes a full subcategory of the category of $\HH M$-modules.

When $M=G$ is an abelian group, then this inclusion $\mathbf{Ab}\hookrightarrow \HH G\text{-Mod}$ is actually an equivalence of categories. In the other direction, we have the functor associating to each $\HH G$-module $\A$ the abelian group $\A(e)$, and there is natural isomorphism of $\HH G$-modules $\A\cong \A(e)$ whose component at each $x\in G$ is the isomorphism of abelian groups $x_*^{-1}:\A(x)\to \A(e)$.}
\end{remark}

\subsection{Tensor product of $\HH M$-modules.}  For any two $\HH M$-modules $ \A$ , $\BB $, their {\em tensor product}, denoted
by $ \A \otimes_{\HH M} \BB $, is the $\HH M$-module defined as
follows: It attaches to any $x\in M$ the abelian group defined by
the coequalizer sequence of homomorphisms
\begin{equation*}
\xymatrix@C=22pt{\bigoplus\limits_{uvw=x}\Z( u)\otimes \A(v)\otimes  \BB(w)\ar@<2pt>[r]^-\phi
\ar@<-2pt>[r]_-\psi&\bigoplus\limits_{zt=x} \A(z)\otimes \BB(t)\ar@{->>}[r]&
( \A \otimes_{\HH M} \BB )(x),
}
\end{equation*}
where, for any two abelian groups $A$ and $B$,  $A\otimes B$ denotes their tensor product as $\Z$-modules, the direct sum on the left is taken over all triples $(u,v,w)\in M^3$ such that $uvw=x$, the direct sum on the middle is over all pairs $(z,t)\in M^2$ with $zt=x$, and the homomorphisms $\phi$ and $\psi$ are defined by
$$\begin{array}{ll} \phi\big(u\otimes a_v\otimes b_w\big)=u_*a_v\otimes b_w\in  \A(uv)\otimes \BB(w), \\[4pt]
\psi\big(u\otimes a_v\otimes b_w\big)=a_v\otimes u_*b_w\in  \A(v)\otimes \BB(uw),\end{array}$$
for all $u,v,w\in M$ with $uvw=x$, $a_v\in  \A(v)$, and $b_w\in  \BB(w)$. For any pair $(x,y)\in M^2$, the  homomorphism $$y_*:( \A \otimes_{\HH M} \BB )(x)\to
( \A \otimes_{\HH M} \BB )(xy)$$ is given on generators by
$$
y_*\big(a_z\otimes b_t\big)=y_*a_z\otimes b_t=a_z\otimes y_* b_t ,\hspace{0.6cm}  \big(a_z\in \A(z),  \, b_t\in  \BB(t), zt=x\big).
$$

If $f: \A \to  \A'$ and $g: \BB \to  \BB'$ are morphisms
of $\HH M$-modules, then there is an induced one $f\otimes
g: \A \otimes_{\HH M} \BB \to  \A'\otimes_{\HH M} \BB'$ such that, for each $x\in M$, the homomorphism
$$(f\otimes
g)_x:( \A \otimes_{\HH M} \BB )(x)\to ( \A '\otimes_{\HH M} \BB ')(x)$$ is given on generators by
$$(f\otimes g)_{x}\big(a_z\otimes b_t\big)=f_za_z\otimes g_tb_t,\hspace{0.6cm}  \big(a_z\in \A(z),  \, b_t\in  \BB(t), zt=x\big).$$
Thus, we have a distributive tensor functor
\begin{equation*}
-\otimes_{\HH M}-:\HH M\text{-}\mathrm{Mod} \times \HH M\text{-}\mathrm{Mod}
\to \HH M\text{-}\mathrm{Mod}.
\end{equation*}
Further,  there are canonical isomorphisms of $\HH M$-modules
\begin{equation*}\begin{array}{l}\boldsymbol{l}_\A:\Z\otimes_{\HH M} \A \cong  \A ,
\hspace{0.4cm}
\boldsymbol{c}_{\A,\BB}: \A \otimes_{\HH M} \BB \cong  \BB \otimes_{\HH M}  \A , \\
\boldsymbol{a}_{\A,\BB,\C}:  \A \otimes_{\HH M} ( \BB \otimes_{\HH M} \C)\cong
( \A \otimes_{\HH M} \BB )\otimes_{\HH M}\C,\end{array}
\end{equation*}
 respectively defined by the formulas
$$\begin{array}{l}\boldsymbol{l}_{zt}(z\otimes a_t)=z_*a_t,\hspace{0.4cm}
\boldsymbol{c}_{zt}(a_z\otimes
b_t)=b_t\otimes a_z,\\
\boldsymbol{a}_{yzt}( a_y\otimes(b_z\otimes c_t))=(a_y\otimes
b_z)\otimes c_t,\end{array}$$ which are easily proven to be natural and
coherent in the sense of \cite[Theorem 5.1]{Mac2}. Therefore, $\HH M\text{-}\mathrm{Mod}$  is a symmetric
monoidal category. We will usually treat the constraints above as
identities, so we think of $\HH M\text{-}\mathrm{Mod}$ as a
symmetric strict monoidal category.

\subsection{Tensor product of complexes of $\HH M$-modules.}
The  (positive) complexes  of $\HH M$-modules
$$\A=\cdots \to \A_2\overset{\partial}\to \A_1\overset{\partial}\to \A_0$$
and the morphisms between them also form an abelian symmetric monoidal category, where the tensor product
 $\A\otimes_{\HH M}\BB$ of two complexes of $\HH M$-modules $\A$ and $\BB$ is the graded $\HH M$-module
whose component of degree $n$ is
$$
\xymatrix{(\A\otimes_{\HH M}\BB)_n=\bigoplus\limits_{p+q=n}\A_p
\otimes_{\HH M}\BB_q,}
$$ and whose differential $\partial^\otimes$, at any $x\in M$, $$\partial^{^\otimes}_x:(\A\otimes_{\HH M}\BB)_n(x)\to (\A\otimes_{\HH M}\BB)_{n-1}(x),$$ is defined on generators by the Leibniz formula
$$
\partial^{^\otimes}_{x}(a_z\otimes b_t)=\partial_za_z
\otimes b_t+
(-1)^{p}\,\,a_z\otimes \partial_tb_t.
$$
for all $z,t\in M$ such that $zt=x$,  $a_z\in \A_p(z)$,  $b_t\in \BB_q(t)$, and $p,q\geq 0$ such that $p+q=n$.

In this monoidal category, the unit object is $\Z$, defined in \eqref{z}, regarded as a complex concentrated in degree zero. The structure constraints
\begin{equation}\label{isocan2}\begin{array}{l}\boldsymbol{l}_\A:\Z\otimes_{\HH M} \A \cong  \A ,
\hspace{0.4cm}
\boldsymbol{c}_{\A,\BB}: \A \otimes_{\HH M} \BB \cong  \BB \otimes_{\HH M}  \A , \\
\boldsymbol{a}_{\A,\BB,\C}:  \A \otimes_{\HH M} ( \BB \otimes_{\HH M} \C)\cong
( \A \otimes_{\HH M} \BB )\otimes_{\HH M}\C,\end{array}
\end{equation}
 are respectively defined by the formulas
$$\begin{array}{l}\boldsymbol{l}_{xy}(x\otimes a_y)=x_*a_y,\\
\boldsymbol{c}_{xy}\big(a_x\otimes
b_y\big)=(-1)^{pq}\ b_y\otimes a_x,\\
\boldsymbol{a}_{xyz}\big( a_x\otimes (b_y\otimes
c_z
)\big)=(a_x\otimes b_y)\otimes c_z,\end{array}$$
for any $x,y,z\in M$, $a_x\in \A_p(x)$, $b_y\in \BB_q(y)$, and $c_z\in \C_r(z)$. As for $\HH M$-modules, we will treat these constraints as identities.

\subsection{Commutative differential graded algebras over $\HH M$}\label{dgh-mod}

A  commutative {\em differential graded algebra} (DG-algebra) $\A$ over $\HH M$ is defined to be a commutative monoid in the symmetric monoidal category of complexes of $\HH M$-modules, see \cite[Chapter VII, \S3]{Mac3}. Hence, $\A$ is a
complex of $\HH M$-modules equipped with a  {\em multiplication morphism} of
complexes  $\circ:\A\otimes_{\HH M}\A\to \A$ satisfying the
associativity $\circ(\circ\otimes id)=\circ(id\otimes \circ)$ and
the commutativity $\circ\,\boldsymbol{c}=\circ$,  and a {\em unit
morphism} of complexes $\iota:\Z \to \A$ satisfying
$\circ(\iota\otimes id_\A)=\boldsymbol{l}_\A$. We  write $$1=\iota_e(e)\in
\A_{0}(e)$$ and, for any $x,y\in M$, $a_x\in \A_{p}(x)$, and
$a_y\in \A_{q}(y)$,
$$a_x\circ a_y=\circ_{xy}(a_x\otimes a_y)\in \A_{p+q}(xy),$$
so that the algebra structure on the complex $\A$ gives us
multiplication  homomorphisms of abelian groups
$$\xymatrix{\A_{p}(x)\otimes \A_{q}(y)\to \A_{p+q}(xy), \hspace{0.5cm}
a_x\otimes a_y\mapsto a_x\circ a_y,}$$ and a {\em
unit} $1\in \A_{0}(e)$, satisfying
\begin{align}\label{dga1}
 x_*a_y\circ a_{z}&= x_*(a_y\circ a_z)=
a_y\circ x_*a_z,\\[3pt] \label{dga2}
a_x\circ a_y&= (-1)^{pq}\,\,a_y\circ a_x,\\[3pt]\label{dga3}
1\circ a_x&=a_x=a_x\circ 1,
\\[3pt]\label{dga4}
a_x\circ (a_y\circ a_z)&= (a_x\circ a_y)
\circ
a_z,\\[3pt]
\label{dga5}
 \partial_{xy}(a_x\circ a_y)&=\partial_xa_x\circ a_y+ (-1)^{p}\,\,
a_x\circ \partial_ya_y,
\end{align}
for all $x,y,z\in M$, $a_x\in \A_p(x)$, $a_y\in \A_q(y)$, and $a_z\in \A_r(z)$.

In these terms, a morphism $f:\A\to \BB$ of commutative DG-algebras over $\HH M$  is a morphism of complexes of $\HH M$-modules such that $f_{xy}(a_x\circ a_y)
=f_xa_x\circ f_ya_y$, and $f_e(1)=1$.

The category of commutative DG-algebras over $\HH M$ is symmetric
monoidal. The
tensor product of two of them $ \A \otimes_{\HH M} \BB $ is given by their tensor product
as complexes of $\HH M$-modules endowed with multiplication such that, for
$u,v,x,y\in M$, $a_u\in  \A_p(u)$, etc.,
$$
(a_u\otimes b_x)\circ (a_y\otimes b_z)=(a_u\circ a_y)\otimes (b_x\circ b_z)
$$
and with unit $1\otimes 1\in ( \A \otimes_{\HH M} \BB )_0(e)$.
Observe that  the canonical isomorphisms in \eqref{isocan2} are actually
of DG-algebras whenever the data  $ \A $, $ \BB $ and
$\C$ therein are DG-algebras over $\HH M$.

Commutative DG-algebras over $\HH M$ which are concentrated in degree zero are the same as  commutative monoids in the symmetric monoidal category of $\HH M$-modules, and they are simply called {\em algebras over $\HH M$} or {\em $\HH M$-algebras}. For example,  $\Z$  is an
$\HH M$-algebra  with multiplication the unit constraint
$\boldsymbol{l}:\Z\otimes_{\HH M}\Z\cong \Z$  and unit the identity $id:\Z\to\Z$. In other words, $\Z$ is an
$\HH M$-algebra whose unit is $e\in \Z e$  and whose
multiplication homomorphisms $\Z(x)\otimes \Z( y)\to \Z(xy)$
are given by $mx\circ ny=(mn)xy$, where $mn$ is multiplication of $m$ and $n$ in the ring $\Z$.

The augmented case is relevant. A commutative {\em differential
graded augmented algebra} (DGA-algebra) $\A$ over $\HH M$
is a commutative DG-algebra over $\HH M$ as above equipped with a homomorphism of commutative DG-algebras  (the {\em  augmentation}) $\epsilon:\A\to \Z$. Such an augmentation is entirely
determined by its component of degree 0, which is a morphism of
$\HH M$-algebras $\epsilon:\A_0\to \Z$ such that $\epsilon
\,\partial=0$.
Morphisms of commutative DGA-algebras over $\HH M$ are those of commutative  DG-algebras
which are compatible with the augmentations (i.e., $\epsilon f=\epsilon$).

\begin{remark}\label{eqabhhm2}{\em  When  $M=G$ is a group, the equivalence between the category of abelian groups and the category of $\HH G$-modules, described in Remark \ref{eqabhhm}, is symmetric monoidal and, therefore, produces an equivalence between the category of commutative DGA-rings and the category of commutative DGA-algebras over $\HH G$. Thus every commutative DGA-ring $A$ defines a {\em constant} commutative DGA-algebra over $\HH G$, equally denoted by $A$, and each commutative DGA-algebra over  $\HH G$, $\A$, gives rise to the DGA-ring $\A(e)$, which comes with a natural isomorphism of DGA-algebras $\A\cong \A(e)$ whose component at each $x\in G$ is the isomorphism of augmented chain complexes $x_*^{-1}:\A(x)\to \A(e)$.
}
\end{remark}

\section{The Bar construction on  commutative  DGA-algebras over $\HH M$}\label{sect3}

Let $\A$ be any given commutative DGA-algebra over $\HH M$. As we explain below, $\A$ determines a new commutative DGA-algebra over $\HH M$, denoted by $\B(\A)$ and called the {\em bar construction} on $\A$.

Previously to describe $\B(\A)$,  let us introduce  complexes of $\HH M$-modules $\bar{\A}$, $\s\bar{\A}$, and $\T^p\s\bar{\A}$ for each integer $p\geq 0$, and a double complex  of $\HH M$-modules $\T^\bullet\s\A$, as follows:

The {\em reduced complex} $\bar{\A}=\cdots \to \bar{\A}_2\overset{\partial}\to \bar{\A}_1\overset{\partial}\to \bar{\A}_0$
is defined to be the cokernel of the unit morphism $\iota:\Z\to \A$. That is,
$
\bar{\A}=\cdots \to \A_2\overset{\partial}\to \A_1\overset{\partial}\to \A_0/\iota \Z
$.
Note that $\iota$
embeds  $\Z$ as a direct summand of the underlying complex $\A$, since, being $\epsilon:\A\to \Z$ the augmentation, $\epsilon\iota=id_{\Z}$. We will use below the following notation: For any $x\in M$ and each chain $a_x$ of the chain complex $\A(x)$, $\tilde{\epsilon}(a_x)$ is the integer which express $\epsilon_x(a_x)$ as a multiple of the generator $x$ of the abelian group $\Z(x)$, that is, such that
\begin{equation}\label{notep}
\epsilon_x(a_x)=\tilde{\epsilon}(a_x)x.
\end{equation}

The complex $\s\bar{\A}$ is the {\em suspension} of $\bar{\A}$, that is, the
complex of $\HH M$-modules defined by $(\s\bar{\A})_{p+2}=\A_{p+1}$, $(\s\bar{\A})_{1}=\A_0/\iota\Z$, $(\s\bar{\A})_0=0$, and differential $-\partial$. The {\em suspension map} is then the morphism of complexes $\s:\bar{\A}\to \s\bar{\A}$, of degree 1, defined by $$\s_p=id_{\bar{\A}_p}:\bar{\A}_p\to (\s\bar{\A})_{p+1}=\bar{\A}_p.$$ Note that the sign in the differential of $\s\bar{\A}$ is taken so that the equality $\partial \s =-\s \partial$ holds.

For each $p\geq 1$, let $\T^p\s\bar{\A}$ be the complex of $\HH M$-modules defined by the iterated tensor product
$$
\xymatrix{\T^p\s\bar{\A}=\s\bar{\A}\otimes{}_{\!\HH M}
\cdots  \otimes{}_{\!\HH M}\s\bar{\A}& (p \text{ factors}).}
$$
Thus, for any integer $n\geq 0$ and $x\in M$, the abelian group $(\T^p\s\bar{\A})_n(x)$ is generated by elements $\s\bar{a}_{x_1}\otimes \cdots\otimes \s\bar{a}_{x_p}$, that we write as
\begin{equation}\label{barnot}
[a_{x_1}\!\mid \cdots\mid \!a_{x_p}],
\end{equation}
where the $x_i\in M$ are elements of the monoid such that $x_1\cdots x_p=x$, and the $a_{x_i}\in \A_{r_i}(x_i)$ are chains of the complexes of abelian groups $\A(x_i)$
whose degrees satisfy that $p+r_1+\cdots +r_p=n$.  On such a generator \eqref{barnot}, the differential $\partial^{^{\otimes}}$ of $\T^p\s\bar{\A}$ at $x$,  $$\partial^{^\otimes}_x:(\T^p\s\bar{\A})_n(x)\to (\T^p\s\bar{\A})_{n-1}(x),$$ acts by
\begin{equation*}
\partial^{^{\otimes}}_x[a_{x_1}|\cdots|a_{x_p}]=
 -\sum_{i=1}^{p}(-1)^{e_{i-1}}[a_{x_1}|\cdots|a_{x_{i-1}}|\partial_{x_i}a_{x_i}
|a_{x_{i+1}}|\cdots|a_{x_p}],
\end{equation*}
where the exponents $e_i$ of the signs are $e_0=0$ and, for $i\geq 1$,
$$ e_i=i+r_1+\cdots+r_i,
$$
and $\partial_{x_i}:\A_{r_i}(x_i)\to \A_{r_i-1}(x_i)$ is the differential of $\A$ at $x_i$.
Remark that the elements \eqref{barnot}  are normalized, in the sense
that
$
[a_{x_1}\!\mid \cdots\mid \!a_{x_p}]=0
$
whenever some $a_{x_i}=x_{i*}1\in \A_0(x_i)$.

For $p=0$, we take $\T^0\s\bar{\A}$ to be $\Z$, but where we write $[\ \, ]$ for the unit $e\in \Z(e)$.
Thus, $\T^0\s\bar{\A}$ is the concentrated in degree 0 complex of $\HH M$-modules such that, for any $x\in M$,  $\T^0\s\bar{\A}(x)$ is
the free abelian group on the element $x_*[\ \, ]$ ($=[\, \ ]$ if $x=e$), and, for each $x,y\in M$, $y_*:\T^0\s\bar{\A}(x)\to \T^0\s\bar{\A}(xy)$ is determined by $y_*x_*[\ \, ]=(yx)_*[\ \, ]$.

The double complex  of $\HH M$-modules
$$
\T^\bullet\s\A= \cdots \to\T^2\s\bar{\A}\overset{\partial^\circ}\longrightarrow \T^1\s\bar{\A}\overset{\partial^\circ}\longrightarrow \T^0\s\bar{\A}
$$
is then constructed, thanks to the multiplication $\circ$ in $\A$, by the morphisms of complexes of $\HH M$-modules $\partial^\circ:\T^p\s\bar{\A}\to\T^{p-1}\s\bar{\A} $, which are of degree $-1$ (so that $\partial^\circ\partial^{^\otimes}=-\partial^{^\otimes}\partial^\circ$) and defined, at any $x\in M$, by the homomorphisms
$$
\partial^\circ_x: (\T^p\s\bar{\A})_n(x)\to (\T^{p-1}\s\bar{\A})_{n-1}(x)
$$
given on generators as in \eqref{barnot} by
\begin{align*}
\partial^\circ_x[a_{x_1}|\cdots|a_{x_p}]=&\
\tilde{\epsilon}_{x_1}\!(a_{x_1})\ x_{1*}[a_{x_2}|\cdots|a_{x_p}]\\
\nonumber
&+\sum_{i=1}^{p-1}(-1)^{e_i}[a_{x_1}|\cdots|a_{x_{i-1}}|a_{x_i}
\circ a_{x_{i+1}}|a_{x_{i-1}}|\cdots|a_{x_p}]\\[5pt] \nonumber
& +(-1)^{e_p}\  \tilde{\epsilon}_{x_p}\!(a_{x_p})\, x_{p\,*}[a_{x_1}|\cdots|
a_{x_{p-1}}]
\end{align*}
(recall the notation $\tilde{\epsilon}$ from \eqref{notep}, and note that the first (resp. last) summand in the above formula is zero whenever the degree $r_1$  of $a_{x_1}$ in the chain complex
$\A(x_1)$ (resp. ($r_p$ of $a_{x_p}$)) is higher than zero).

All in all, we are now ready to present the bar construction $\B(\A)$. As a graded $\HH M$-module
$$
\B(\A)= \cdots \to\B(\A)_2\overset{\partial}\longrightarrow \B(\A)_1\overset{\partial}\longrightarrow \B(\A)_0
$$
is defined by the $\HH M$-modules
$$
\xymatrix{\B(\A)_n=\bigoplus_{p\geq 0}(\T^p\s\bar{\A})_n}.
$$
Notice that $\partial^{^\otimes}\B(\A)_n\subseteq \B(\A)_{n-1}$,  $\partial^{\circ}\B(\A)_n\subseteq \B(\A)_{n-1}$, and that $(\partial^{^\otimes}+\partial^\circ)^2=0$. Thus, $\B(\A)$ becomes a complex of $\HH M$-modules with differential
$$
\partial=\partial^{^\otimes}+\partial^\circ:\B(\A)_{n}\to \B(\A)_{n-1}.
$$

\begin{proposition}  $\B(\A)$ is a commutative DGA-algebra over $\HH M$,
with  multiplication $$\circ:\B(\A)\otimes_{_{\HH\!M}}
\B(\A) \to \B(\A)$$
defined, for integers $m,n\geq 0$ and $x,y\in M$, by the
homomorphisms of abelian groups
$$
\xymatrix{\circ:\B(\A)_{m}(x)\otimes \B(\A)_{n}(y)\to
\B(\A)_{m+n}(xy)}
$$
given by the {\em shuffle products}
$$
[a_{x_1}|\cdots|a_{x_p}]\circ
[a_{x_{p+1}}|\cdots|a_{x_{p+q}}]=\sum_\sigma (-1)^{e(\sigma)}
\big[a_{x_{\sigma^{\scriptsize {\text -1}}(1)}}|\cdots|
a_{x_{\sigma^{ {\text -1}}(p+q)}}\big]
$$
for any $x_i\in M$ and $a_{x_i}\in \A_{r_i}(x_i)$, $i=1,\ldots,p+q$,  such that $x_1\cdots x_p=x$, $x_{p+1}\cdots x_{p+q}=y$, $p+\sum_{i=1}^{p} r_i=m$, and $q+\sum_{j=1}^{q} r_{p+j}=n$,
where the sum is taken over all $(p,q)$-shuffles $\sigma$ and, for each $\sigma$, the exponent of the sign is $e(\sigma)=\sum (1+r_i)(1+r_{p+j})$ summed over all pairs $(i,p+j)$
such that $\sigma(i)>\sigma(p+j)$.

The unit is $[\ \,]\in \B(\A)_0(e)$, that is, the unit morphism $\iota:\Z\to \B(\A)$ is
the isomorphism of $\HH M$-modules $\iota:\Z\cong \B(\A)_0$ given by $\iota_x(x)=x_*[\ \,]$, for any $x\in M$,  and the augmentation $\epsilon:\B(\A)\to \Z$ is defined by the isomorphism of $\HH M$-modules $\epsilon=\iota^{-1}:\B(\A)_0\cong \Z$ such that $\epsilon_x(x_*[\ \,])=x$, for any $x\in M$.
\end{proposition}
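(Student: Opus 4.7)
The plan is to verify, one by one, the commutative DGA-algebra axioms \eqref{dga1}--\eqref{dga5} for $\B(\A)$ equipped with the shuffle product $\circ$, together with the statements about the unit and the augmentation. Most of these reduce to standard combinatorics of shuffles and can be dispatched quickly.

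First I would check that the shuffle product is well-defined on the normalized bar complex: if any entry in either factor equals $x_{i*}1$, then every monomial in the shuffle expansion still contains that degenerate entry in some slot and is therefore zero in $\B(\A)$. The $\HH M$-module compatibility \eqref{dga1} is immediate, since the operators $y_*$ act diagonally on bar symbols and commute with rearrangements of tensor positions. Associativity \eqref{dga4} and graded commutativity \eqref{dga2} are the classical identities for shuffle multiplication: the former comes from the bijection between $(p,q,r)$-shuffles and iterated $(p,q)$- then $(p+q,r)$-shuffles; the latter from the block-reversing bijection between $(p,q)$- and $(q,p)$-shuffles, whose sign count yields exactly the factor $(-1)^{mn}$. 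Note that neither of these invokes the multiplication of $\A$, so the shuffle product on $\B(\A)$ is automatically graded commutative regardless of whether $\A$ is. The unit law \eqref{dga3} is immediate, since the only shuffle involving the empty bar $[\ \,]$ is the trivial one.

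The substantive step is the Leibniz rule \eqref{dga5}, which I would split along the two pieces of $\partial=\partial^{\otimes}+\partial^{\circ}$. For $\partial^{\otimes}$ this is the classical Eilenberg-MacLane fact that the shuffle product is a chain map on tensor powers, proved term-by-term by tracking the position of the differentiated entry through each shuffle. For $\partial^{\circ}$ one must examine the three types of terms produced when $\partial^{\circ}$ is applied to a shuffled expression: boundary terms in which the first or last entry of the shuffled word is extracted via $\tilde{\epsilon}$; internal terms in which two adjacent entries collapse via $\circ$; and the symmetric versions coming from both blocks. The ``mixed'' terms, where an entry from the first block is adjacent to one from the second block in the shuffled sequence and is then collapsed by $\circ$, must cancel in pairs; this is precisely where the graded commutativity of $\circ$ on $\A$ is used, to match $a\circ b$ with $\pm\, b\circ a$ coming from the shuffle in which their positions are swapped. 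The ``pure'' terms, in which both collapsed entries belong to a common block, reassemble into the product $\partial^{\circ}[a_{x_1}|\cdots|a_{x_p}]\circ[a_{x_{p+1}}|\cdots|a_{x_{p+q}}]$ plus its sign-corrected symmetric term.

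The unit and augmentation claims are immediate: $\B(\A)_0$ is generated over $\HH M$ by the single class $[\ \,]$, so $\iota:\Z\cong \B(\A)_0$ and its inverse $\epsilon$ are isomorphisms of $\HH M$-algebras; the equation $\epsilon\,\partial=0$ on $\B(\A)_1$ follows from the cancellation in $\partial^{\circ}[a_{x_1}]$ when $r_1=0$, since the first and last summands in that formula have opposite sign. The main obstacle of the whole argument is the sign bookkeeping in the $\partial^{\circ}$-Leibniz step: one must track how the exponents $e_i$ transform under a $(p,q)$-shuffle $\sigma$ and check that the mixed-block cancellations really are on-the-nose after invoking graded commutativity in $\A$. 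Once the sign conventions of the suspension, of $\partial^{\otimes}$, of $\partial^{\circ}$, and of the shuffle signs $e(\sigma)$ are all aligned, the verification reduces to a direct if somewhat lengthy computation.
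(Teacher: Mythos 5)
Your plan is correct in outline, but it takes a genuinely different route from the paper. You propose a direct verification of the axioms \eqref{dga1}--\eqref{dga5}: well-definedness on normalized chains, the classical shuffle combinatorics for associativity and graded commutativity, and the Leibniz rule split along $\partial^{\otimes}$ and $\partial^{\circ}$, with the mixed-term cancellation in the $\partial^{\circ}$ step correctly identified as the place where commutativity of $\A$ enters. This is essentially a transplantation of the Eilenberg--Mac Lane argument to the $\HH M$-module setting, and it works because the tensor product $\otimes_{\HH M}$ is computed by the same universal formulas as the ordinary tensor product. The paper instead gives an indirect proof: it introduces the exact, faithful, symmetric strict monoidal functor $\Gamma:\HH M\text{-}\mathrm{Mod}\to \Z M\text{-}\mathrm{Mod}$, $\Gamma\A=\bigoplus_{x\in M}\A(x)$, observes that $\Gamma\B(\A)=\B(\Gamma\A)$ compatibly with the multiplication, unit and augmentation, and then deduces all the required identities in $\B(\A)$ from the classical theorem for $\B(\Gamma\A)$ together with the faithfulness of $\Gamma$. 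The paper's route buys a short proof at the cost of invoking the classical result as a black box and of verifying the strict monoidality and compatibility of $\Gamma$ with the bar construction; your route is self-contained and makes the sign conventions explicit, but it commits you to redoing the full Eilenberg--Mac Lane sign bookkeeping, which is exactly the ``direct if somewhat lengthy computation'' you flag at the end. Both are valid; if you carry out your plan, the one point to be scrupulous about is how the exponents $e_i$ and the shuffle signs $e(\sigma)$ interact in the $\partial^{\circ}$-Leibniz step, since that is where an error would be hardest to detect.
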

\begin{proof}
We give an indirect proof, by using that the category of $\HH M$-modules is closely related
to the category $\Z M\text{-}\mathrm{Mod}$, of ordinary modules over the monoid ring $\Z M$.

There is
an exact faithful functor $\Gamma:\HH M\text{-}\mathrm{Mod} \to
\Z M\text{-}\mathrm{Mod}$, which carries any
$\HH M$-module $\A$ to the $\Z M$-module
defined by the abelian group
$
\xymatrix{\Gamma \A=\bigoplus_{x\in M} \A(x),}
$
with $M$-action of an element $y\in M$ on an element $a_x\in \A(x)$
given by
$
y\,a_x=y_*a_x\in \A(xy)
$. This functor $\Gamma $ is left adjoint to the functor which
associates to any $\Z M$-module $A$ the constant on objects $\HH M$-module defined by the underlying abelian group $A$, with $y_*:A\to A$, for any $y\in M$, the homomorphism
 of multiplication by $y$ \cite{kur-Pir}.

It is plain to see that $\Gamma $ is a symmetric strict
monoidal functor, that is, $\Gamma \Z=\Z M$,  for any $\HH M$-modules
$\A$ and $\BB$, $\Gamma (\A\otimes_{\HH M}\BB)=\Gamma \A\otimes_{\Z M}\Gamma \BB
$, and it carries the associativity, unit, and commutativity constraints of the monoidal category of $\HH M$-modules to the corresponding ones of the category of $\Z M$-modules. Then, the same properties hold for the induced functor $\Gamma$ from the symmetric monoidal category of complexes of $\HH M$-modules to the the symmetric monoidal category of complexes of $\Z M$-modules. It follows that $\Gamma$ transform commutative monoids in the category of complexes of $\HH M$-modules (i.e. commutative DG-algebras over $\HH M$) to commutative  monoids in the category of $\Z M$-modules (i.e., commutative DG-algebras over $\Z M$), and therefore $\Gamma$ also transform commutative DGA-algebras over $\HH M$ to commutative  DGA-algebras over the monoid ring $\Z M$.

Now, given $\A$, a commutative  DGA-algebra over $\HH M$, let $\B(\Gamma \A)$ be the commutative DGA-algebra over $\Z M$ obtained by applying the ordinary Eilenberg-Mac Lane bar construction on $\Gamma \A$ \cite[Chapter X, Theorem 12.1]{maclane}. A direct comparison shows that $\B(\Gamma \A)=\Gamma\B(\A)$ as complexes of $\Z M$-modules, and also that  its multiplication, unit, and augmentation  are, respectively, just the morphisms $$\xymatrix{\B(\Gamma A)\otimes_{\Z M}\B(\Gamma \A)=\Gamma\big(\B(\A)
\otimes_{\HH M}\B(\A)\big)\ar[r]^-{\textstyle \Gamma \circ}& \Gamma\B(\A)=\B(\Gamma \A)},$$
$$\xymatrix{\Z M=\Gamma \Z\ar[r]^-{\Gamma \iota}& \Gamma\B(\A)=\B(\Gamma \A),&\B(\Gamma \A)=\Gamma\B(\A)
\ar[r]^-{\Gamma \epsilon}&\Gamma \Z=\Z M. }
$$
Then, as $\B(\Gamma A)$ is actually a commutative  DGA-algebra over $\Z M$, it follows that the equalities
$$
\Gamma\big(\circ(\circ\otimes id_{\B(\A)})\big)=\Gamma\big(\circ(id_{\B(\A)}\otimes \circ)\big), \hspace{0.4cm}\Gamma(\circ\,\boldsymbol{c_{_{\B(\A),\B(\A)}}})=\Gamma \circ,$$
$$\Gamma(\circ(\iota\otimes id_{\B(\A)}))=\Gamma\boldsymbol{l}_{\B(\A)}, \hspace{0.4cm} \Gamma(\circ (\epsilon\otimes \epsilon))=\Gamma(\epsilon \circ), \hspace{0.4cm} \Gamma(\epsilon \iota)=\Gamma id_\Z.
$$
hold. Therefore, the result, that is, that $\B(\A)$ is a commutative  DGA-algebra over $\HH M$, follows since  the functor $\Gamma$ is faithful.
\end{proof}

\begin{remark}{\em
Observe, as in \cite[\S 7]{E-M-I},  that the shuffle product $\circ$ on $\B(\A)$ can also be expressed by the recursive formula below, where $\alpha=[a_{x_1}|\cdots|a_{x_p}]\in \B(\A)_{r}(x)$, $\beta=[b_{y_1}|\cdots|b_{y_q}]\in \B(\A)_{s}(y)$,  $a_{z}\in \A_{m}(z)$ and  $b_{t}\in \A_{n}(t)$.
\begin{equation}\label{forshufpro}
[\alpha\mid a_{z}]\circ [\beta\mid b_{t}]=[[\alpha\mid a_{z}]\circ \beta\mid b_{t}]
+(-1)^{r(n+s+1)}[\alpha\circ [\beta\mid b_{t}]\mid a_{z}]
\end{equation}
}

\end{remark}

Let us stress  the {\em
suspension} morphism of complexes of $\HH M$-modules, of degree $1$ (hence satisfying $\partial\, \s =-\s\, \partial$),
\begin{equation}\label{sus}
\s: \A\to \B(\A),
\end{equation}
which is defined, at any $x\in M$, by
$\s_xa_x=[a_x]\in \B(\A)(x)$, for any chain $a_x$ of $\A(x)$.

\vspace{0.2cm}
Such as Mac Lane did in \cite[Chapter X, \S 12]{maclane} for ordinary commutative DGA-algebras over a commutative ring, the cohomology of a commutative DGA-algebra over $\HH M$
can be defined in ``stages" or ``levels". If $\A$ is any commutative DGA-algebra over $\HH M$, then $\B(\A)$ is again a commutative DGA-algebra over $\HH M$,
so an iteration is possible to form $\B^{r}\!(\A)$ for each integer $r\geq 1$. Hence,  we define the {\em $r$th level cohomology groups of $\A$ with coefficients in a $\HH M$-module $\BB$}, denoted by $H^n(\A,r;\BB)$, as
\begin{equation*}
H^n(\A,r;\BB)=H^n\big(\mathrm{Hom}_{\HH M}(\B^{r}\!(\A),\BB)\big), \hspace{0.4cm} n=0,1,\ldots \, ,
\end{equation*}
where $\mathrm{Hom}_{\HH M}(\B^{r}\!(\A),\BB)$ is the cochain complex obtained by applying the functor $\mathrm{Hom}_{\HH M}(-,\BB)$ to the underlying chain complex of $\HH M$-modules  $\B^{r}\!(\A)$.

\begin{remark}\label{eqabhhm3} {\em
When the bar construction above is applied on the constant DGA-algebra over $\HH M$ defined by a commutative DGA-ring $A$, the result is just the
constant DGA-algebra over $\HH M$ defined by the commutative DGA-ring
obtained by applying on $A$ the Eilenberg-Mac Lane
reduced bar construction. Hence, the notation $\B(A)$ is not
confusing.}
\end{remark}

If $\A$ and $\BB$ are commutative DGA-algebras over $\HH M$, then we say that two morphisms of DGA-algebras $f:\A\to \BB$ and $g:\BB\to \A$ form a {\em contraction} whenever $fg=id_\BB$, and there exists a homotopy of morphisms of complexes $\Phi:gf\Rightarrow id_\A$ satisfying the conditions
\begin{equation}\label{contr2}
  \Phi g=0, \ f \Phi =0,\  \Phi\Phi=0.
 \end{equation}

Paralleling the proof by Eilenberg and Maclane of \cite[Theorem 12.1]{E-M-I}, one proves the following:

\begin{lemma}\label{iterate}
 If $f:\A\to \BB$ and $g:\BB\to \A$ form a contraction of commutative  DGA-algebras over $\HH M$, then the induced
 $\B(f):\B(\A)\to \B(\BB)$ and $\B(g):\B(\BB)\to \B(\A)$ also form a contraction.
\end{lemma}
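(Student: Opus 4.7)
The plan is to proceed in two parts. First, since $\B$ is a functor on commutative DGA-algebras over $\HH M$, we automatically have $\B(f)\B(g)=\B(fg)=\B(id_\BB)=id_{\B(\BB)}$, so this half of a contraction is free. The real work lies in constructing a homotopy $\Phi^{\B}:\B(g)\B(f)\Rightarrow id_{\B(\A)}$ satisfying the three side conditions $\Phi^{\B}\B(g)=0$, $\B(f)\Phi^{\B}=0$ and $\Phi^{\B}\Phi^{\B}=0$.

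The candidate I would take is the standard tensor-style lift of $\Phi$ to the bar construction, defined on a generator as in \eqref{barnot} by
$$\Phi^{\B}[a_{x_1}|\cdots|a_{x_p}]=\sum_{i=1}^{p}(-1)^{e_{i-1}}\big[gfa_{x_1}|\cdots|gfa_{x_{i-1}}|\Phi a_{x_i}|a_{x_{i+1}}|\cdots|a_{x_p}\big],$$
with the sign exponents $e_j$ as in the bar differential. The verification splits into three items: (a) well-definedness modulo the coequalizer defining $\otimes_{\HH M}$ and compatibility with the $\HH M$-module structure, both following from naturality of $\Phi$ with respect to the translations $y_\ast$; (b) the homotopy identity $\partial\Phi^{\B}+\Phi^{\B}\partial=id_{\B(\A)}-\B(g)\B(f)$, checked by splitting the bar differential as $\partial^{^\otimes}+\partial^\circ$ and matching cancellations slot by slot using $\partial\Phi+\Phi\partial=id_\A-gf$; and (c) the three side conditions, which propagate directly from the corresponding conditions on $\Phi$. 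Namely, $\Phi g=0$ and $f\Phi=0$ annihilate the ``mixed'' summands that would otherwise survive in $\Phi^{\B}\B(g)$ and $\B(f)\Phi^{\B}$, while $\Phi\Phi=0$ kills the diagonal terms in the double sum defining $\Phi^{\B}\Phi^{\B}$, the off-diagonal terms vanishing because an interior factor $gf$ meets either $\Phi g=0$ or $f\Phi=0$.

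To avoid wading through the sign bookkeeping by hand, I would actually push everything through the faithful, exact, strict symmetric monoidal functor $\Gamma:\HH M\text{-}\mathrm{Mod}\to \Z M\text{-}\mathrm{Mod}$ introduced in the proof of the previous proposition. Under $\Gamma$, a contraction $(f,g,\Phi)$ over $\HH M$ maps to a contraction $(\Gamma f,\Gamma g,\Gamma\Phi)$ of commutative DGA-algebras over the monoid ring $\Z M$; by the classical Eilenberg--Mac Lane result \cite[Theorem 12.1]{E-M-I}, the maps $\B(\Gamma f)$, $\B(\Gamma g)$ together with a canonical induced homotopy form a contraction of $\B(\Gamma\A)$ and $\B(\Gamma\BB)$. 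Since $\Gamma\B(\A)=\B(\Gamma\A)$ and $\Gamma$ transports the formula above verbatim, the image $\Gamma\Phi^{\B}$ coincides with that classical induced homotopy, and faithfulness of $\Gamma$ forces the homotopy identity and the three side conditions to hold already over $\HH M$. The main obstacle that genuinely requires attention in our setting is therefore only (a): checking that $\Phi^{\B}$ is well-defined on the $\otimes_{\HH M}$ coequalizer and is natural in the $\HH M$-action, which is a routine but indispensable use of the naturality of $\Phi$.
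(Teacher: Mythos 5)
Your two easy points are fine: functoriality of $\B$ does give $\B(f)\B(g)=\B(fg)=id$, and reducing to $\Z M$-modules via $\Gamma$ is exactly in the spirit of the paper's proof that $\B(\A)$ is a DGA-algebra (the paper offers no proof of this lemma beyond the instruction to parallel Eilenberg--Mac Lane). The gap is in your candidate homotopy. The formula $\Phi^{\B}[a_{x_1}|\cdots|a_{x_p}]=\sum_i(-1)^{e_{i-1}}[gfa_{x_1}|\cdots|gfa_{x_{i-1}}|\Phi a_{x_i}|a_{x_{i+1}}|\cdots|a_{x_p}]$ is the standard tensor-trick homotopy, and it does satisfy $\partial^{\otimes}\Phi^{\B}+\Phi^{\B}\partial^{\otimes}=id-\B(g)\B(f)$. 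But the bar differential is $\partial^{\otimes}+\partial^{\circ}$, and since $\Phi^{\B}$ preserves the filtration by the number of bars while $\partial^{\circ}$ strictly lowers it, the homotopy identity forces, in each negative filtration degree, the extra identity $\partial^{\circ}\Phi^{\B}+\Phi^{\B}\partial^{\circ}=0$. Carrying this out on $[a_{x_1}|a_{x_2}]$, the terms involving $\tilde{\epsilon}$ cancel, but one is left with the three terms $[\Phi(a_{x_1}\circ a_{x_2})]$, $[\Phi a_{x_1}\circ a_{x_2}]$ and $[gfa_{x_1}\circ\Phi a_{x_2}]$ (with signs), whose cancellation requires $\Phi(a\circ b)=\Phi a\circ b+(-1)^{|a|}\,gf(a)\circ\Phi b$. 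This derivation property of $\Phi$ is not part of the definition of contraction --- the conditions \eqref{contr2} say nothing of the sort --- and it is not verified, for instance, for the homotopy of Theorem \ref{amtheo1} to which the lemma is later applied. So step (b), ``matching cancellations slot by slot,'' does not go through.

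The repair, and what ``paralleling Eilenberg--Mac Lane'' actually amounts to, is to treat $\partial^{\circ}$ as a perturbation of $\partial^{\otimes}$: the correct homotopy is the sum $\sum_{n\geq 0}\Phi^{\B}(\partial^{\circ}\Phi^{\B})^{n}$, which is finite on each chain because $\partial^{\circ}$ lowers the number of bars while $\Phi^{\B}$ preserves it, and the side conditions \eqref{contr2} are precisely what guarantee that this corrected homotopy again satisfies \eqref{contr2}. Your $\Gamma$-reduction then also needs adjusting. Since $\Gamma$ is faithful but not full, a contraction of $\B(\Gamma\A)$ over $\Z M$ does not automatically descend to $\HH M$; one must exhibit an $\HH M$-morphism mapping onto the classical homotopy. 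You do attempt this, but by identifying the classical homotopy with $\Gamma\Phi^{\B}$ for the naive $\Phi^{\B}$, which is false in general. What does work is to observe that the corrected homotopy above is assembled from $f$, $g$, $\Phi$, $\partial$, $\circ$, $\iota$, $\epsilon$ by sums, composites and tensor products inside $\HH M$-Mod, hence is already defined over $\HH M$; applying $\Gamma$ and the classical theorem then transfers the homotopy identity and the side conditions back by faithfulness, exactly as in the paper's proof that $\B(\A)$ is a commutative DGA-algebra.
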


\section{Free $\HH M$-modules}\label{sect4}
 Let $\mathbf{Set}\!\!\downarrow\!\!_M$ be
the comma category of sets over
 the underlying set of $M$; that is, the category whose objects
 $S=(S,\pi)$ are sets $S$ endowed with a map $\pi:S\to M$, and
 whose morphisms are maps $\varphi:S\to T$ such that $\pi\varphi=\pi$. There is a {\em forgetful functor}
 $$\U:\HH M\text{-Mod}
 \to \mathbf{Set}\!\!\downarrow\!\!_M,$$
 which carries any $\HH\!M$-module $\A$ to the disjoint union set
 $$\xymatrix{\U \A=\bigcup\limits_{x\in M}\A(x)=\{(x,a_x)\mid \,  x\in M,\, a_x\in \A(x)\},}$$
endowed with the projection map $\pi:\U \A\to M$, $\pi(x,a_x)=x$. A morphism  $f:\A\to \BB$ is sent to the map $\U f:\U \A\to \U \BB$ given by $\U f(x,a_x)=(x,f_xa_x)$.
 There is also   a {\em free  $\HH\!M$-module}
functor
\begin{equation}\label{zz}\ZZ :\mathbf{Set}\!\!\downarrow\!\!_M \to
\HH\!M\text{-Mod}, \end{equation}
 which is defined as follows:
If $S$ is any set over $M$, then $\ZZ S$ is the $\HH M$-module such that, for each  $x\in M$,
$$ \ZZ S(x)=\Z\{(u,s)\in M\times S \mid u\,\pi(s)=x\}
$$
is the free abelian group with generators all pairs $(u,s)$, where
$u\in M$ and $s\in S$,  such that $u\,\pi(s)=x$.
We usually write $(e,s)$ simply by $s$; so that each
element of $s\in S$ is regarded as an element $s\in  \ZZ S(\pi s)$.
For any
$x,y\in M$,  the homomorphism
$$ y_*: \ZZ S(x)\to  \ZZ S(xy)$$
is defined on generators by $y_*(u,s)=(uy,s)$.
If $\varphi:S\to T$ is any map of sets over $M$, the induced
morphism  $ \ZZ \varphi: \ZZ S\to  \ZZ T$ is given, at
each $x\in M$, by the homomorphism $ (\ZZ \varphi)_x: \ZZ S(x)\to  \ZZ T(x)$ defined on generators by $( \ZZ\varphi)_x(u,s)=(u,\varphi s)$.

\begin{proposition}\label{adfu} The functor $\ZZ$  is left adjoint to the functor
 $\U$. Thus, for $S$ any set over $M$, to
each $\HH M$-module $\A$ and each list of elements $a_s\in
\A(\pi s)$, one for each $s\in S$, there is a unique morphism of
$\HH M$-modules $f:\ZZ S\to \A$ with
 $f_{\pi s}(s)=a_s$ for every $s\in S$.
\end{proposition}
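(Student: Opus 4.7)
The plan is to prove the universal-property statement (second sentence) directly, and then read off the adjunction from it. Given a set $S$ over $M$ with structure map $\pi\colon S\to M$, an $\HH M$-module $\A$, and a family $\{a_s\in \A(\pi s)\}_{s\in S}$, I would define, for each $x\in M$, a homomorphism $f_x\colon \ZZ S(x)\to \A(x)$ on the free generators by
$$
f_x(u,s)=u_*a_s\in \A(u\,\pi s)=\A(x),\qquad (u\,\pi s=x),
$$
and extend $\Z$-linearly.

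Next I would check that $f=\{f_x\}_{x\in M}$ is a morphism of $\HH M$-modules, i.e.\ that $f_{xy}\circ y_*=y_*\circ f_x$ on generators. This reduces to the identity $f_{xy}(uy,s)=(uy)_*a_s=y_*u_*a_s=y_*f_x(u,s)$, which follows from the functoriality of the action $y_*z_*=(yz)_*$ stated in Section~\ref{sect2}. By construction, $f_{\pi s}(e,s)=e_*a_s=a_s$, so $f$ realises the prescribed values.

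For uniqueness, the key remark is that in $\ZZ S$ every generator $(u,s)$ equals $u_*s$, where $s$ abbreviates $(e,s)\in \ZZ S(\pi s)$; indeed $u_*(e,s)=(eu,s)=(u,s)$ by definition of the action on $\ZZ S$. Hence for any morphism $f\colon \ZZ S\to \A$ of $\HH M$-modules satisfying $f_{\pi s}(s)=a_s$, naturality forces
$$
f_x(u,s)=f_{u\pi s}(u_*s)=u_*f_{\pi s}(s)=u_*a_s,
$$
so $f$ coincides with the one built above.

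Finally I would package this as the claimed adjunction. The universal property yields, for each $S$ and $\A$, a bijection
$$
\mathrm{Hom}_{\HH M}(\ZZ S,\A)\;\longleftrightarrow\;\mathrm{Hom}_{\mathbf{Set}\downarrow M}(S,\U\A),
$$
sending $f$ to the map $s\mapsto(\pi s, f_{\pi s}(s))$ and sending $\varphi\colon S\to \U\A$, $\varphi(s)=(\pi s, a_s)$, to the morphism determined by the family $\{a_s\}$. Naturality in $S$ and in $\A$ is a routine check on generators, using that both $\ZZ\varphi$ and $\U g$ are defined pointwise. Since no step involves any genuine obstacle, the only care needed is bookkeeping of the two sides' variances; I would keep the verification at the level of generators $(u,s)=u_*s$ throughout, which makes all diagrams collapse to the single identity $u_*a_s\mapsto u_*a_s$.
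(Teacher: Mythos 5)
Your proof is correct and follows essentially the same route as the paper: the paper likewise exhibits the unit $\nu\colon S\to \U\ZZ S$, $s\mapsto(\pi s,s)$, and observes that the unique extension of a family $\{a_s\}$ is forced to be $f_x(u,s)=u_*a_s$. Your write-up merely makes explicit the naturality and uniqueness checks that the paper leaves implicit.
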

\begin{proof} At any set $S$ over $M$, the unit of the
adjunction is the map
$$\nu:S\to \U\ZZ S=
\{(x,a_x)\mid x\in M,\, a_x\in  \ZZ S(x)\},
\hspace{0.5cm} s\mapsto (\pi s, s).
$$
If $\A$ is a $\HH M$-module and $\varphi:S\to \U \A$ is any map over
$M$, then, the unique morphism of $\HH M$-modules $f: \ZZ S\to \A$
such that $( \U f)\,\nu =\varphi$ is determined by the equations
$f_x(u,s)=u_*\varphi(s)$, for any $x\in M$ and  $(u,s)\in M\times S$ with $u\,\pi(s)=x$.
\end{proof}

 The category $\mathbf{Set}\!\downarrow\!_M$  has a symmetric monoidal structure, where the tensor product of two sets over $M$, say $S$ and $T$, is the cartesian
product set of $S\times T$  with $\pi(s,t)=\pi(s)\pi(t)$. The unit
object is provided by the unitary set $\{e\}$ with $\pi(e)=e\in M$, and
the associativity, unit,  and commutativity constraints
are the obvious ones. Hereafter, the category
$\mathbf{Set}\!\downarrow\!\!_M$ will be considered with this monoidal
structure\footnote{The category $\mathbf{Set}\!\downarrow\!_M$ has a different monoidal structure where the tensor product  is given by the fibre-product
$S\times_MT$ with $\pi(s,t)=\pi(s)=\pi(t)$.}.

\begin{proposition}\label{fsmon}
The free $\HH M$-module functor \eqref{zz} is symmetric monoidal, that is,
there are natural and coherent isomorphisms of $\HH M$-modules
\begin{equation*}\xymatrix{
\ZZ(S\times T)\cong \ZZ S\otimes_{\HH M}\ZZ T,
\hspace{0.4cm} \ZZ\{e\}\cong \Z ,}
\end{equation*}
for $S$ and $T$ any sets over $M$.
\end{proposition}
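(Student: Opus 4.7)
My plan is to build the two monoidal comparison morphisms using the universal property of Proposition~\ref{adfu}, and then verify the required identities by chasing generators through the coequalizer defining $\otimes_{\HH M}$.

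For the unit, the definition of $\ZZ$ in~\eqref{zz} gives $\ZZ\{e\}(x)=\Z\{(u,e)\mid ue=x\}$, freely generated by the single element $(x,e)$. The correspondence $(x,e)\leftrightarrow x$ furnishes an isomorphism $\ZZ\{e\}\cong \Z$ of $\HH M$-modules, as the action $y_*(x,e)=(xy,e)$ matches $y_*x=xy$ under~\eqref{z}.

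For the tensor product, I first invoke Proposition~\ref{adfu} to define
$$\beta_{S,T}:\ZZ(S\times T)\longrightarrow \ZZ S\otimes_{\HH M}\ZZ T$$
by prescribing, on each generator $(s,t)\in S\times T$, the image $s\otimes t\in \ZZ S(\pi s)\otimes \ZZ T(\pi t)$, which sits inside $(\ZZ S\otimes_{\HH M}\ZZ T)(\pi s\cdot\pi t)=(\ZZ S\otimes_{\HH M}\ZZ T)(\pi(s,t))$; on an arbitrary generator this reads $\beta_{S,T,x}(u,(s,t))=(u,s)\otimes (e,t)$. In the opposite direction I define
$$\alpha_{S,T}:\ZZ S\otimes_{\HH M}\ZZ T\longrightarrow \ZZ(S\times T)$$
on generators by $(u,s)\otimes (v,t)\mapsto (uv,(s,t))$. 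Well-definedness through the coequalizer defining $\otimes_{\HH M}$ is immediate from commutativity of $M$, since $\phi$ and $\psi$ applied to $u\otimes (a,s)\otimes (b,t)$ yield $(au,s)\otimes(b,t)$ and $(a,s)\otimes(bu,t)$, which land on $(aub,(s,t))=(abu,(s,t))$. A direct check on generators gives $\alpha_{S,T}\beta_{S,T}(u,(s,t))=(u,(s,t))$, and $\beta_{S,T}\alpha_{S,T}((u,s)\otimes (v,t))=(uv,s)\otimes (e,t)$, which coincides with $(u,s)\otimes (v,t)$ via the coequalizer identification $(u,s)\otimes (v,t)=(u,s)\otimes v_*(e,t)=v_*(u,s)\otimes (e,t)$.

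Naturality in $(S,T)$ is immediate on generators. For the coherence of the associativity, unit, and symmetry constraints from~\eqref{isocan2} (restricted to degree zero), I would evaluate each diagram on a generator $(s,t,u)\in S\times T\times U$ (or $(s,t)$, resp. $(e,s)$), where the comparison reduces to the corresponding coherence in $\mathbf{Set}\!\downarrow\!_M$; the universal property of Proposition~\ref{adfu} upgrades these equalities on generators to equalities of $\HH M$-module morphisms. The main obstacle is concentrated in the coequalizer identification $(u,s)\otimes (v,t)=(uv,s)\otimes (e,t)$, which is where the commutativity of $M$ is essential; once that step is in hand, the rest is pure bookkeeping.
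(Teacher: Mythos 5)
Your proposal is correct and follows essentially the same route as the paper: the comparison maps are defined on generators $(s,t)\mapsto s\otimes t$ and $(x,e)\mapsto x$ via the universal property of Proposition~\ref{adfu}, and the isomorphism claim comes down to the coequalizer identification $(u,s)\otimes(v,t)=(uv)_*(s\otimes t)$, with naturality and coherence checked on generators. The only difference is cosmetic: you exhibit an explicit two-sided inverse $\alpha_{S,T}$ and verify it factors through the coequalizer, where the paper simply observes that the relations reduce every generator of $(\ZZ S\otimes_{\HH M}\ZZ T)(x)$ to the basis $\{u_*(s\otimes t)\}$ and declares $f_x$ "clearly an isomorphism."
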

\begin{proof} For $S$, $T$ any given sets over $M$,  the isomorphism $f:\ZZ(S\times
T)\cong \ZZ S\otimes_{\HH M}\ZZ T$ is the morphism of $\HH M$-modules
such that, for any $(s,t)\in S\times T$,  $f_{\pi(s,t)}(s,t)=s\otimes t$. Observe  that, for any $x\in M$,  the abelian group $\ZZ(S\times
T)(x)$ is free with generators the elements $(u,s,t)=u_*(s,t)$, with $u\in M$, $s\in S$, and $t\in T$, such that  $u\,\pi(s)\,\pi(t)=x$, while
$(\ZZ S\otimes_{\HH M}\ZZ T)(x)$ is the abelian group generated by the
elements $(u,s)\otimes (v,t)=u_*s\otimes v_*t$, with $u,v\in M$, $s\in S$, and
$t\in T$, such  that $u\,\pi(s)\, v\, \pi(t)=x$, with the relations
$u_*s\otimes v_*t=(uv)_*(s\otimes t)$. Then, the
homomorphism $f_x:\ZZ(S\times T)(x)\to (\ZZ S\otimes_{\HH\! M}\ZZ T)(x)$,
which acts on elements of the basis  by
$f_x(u_*(s,t))=u_*(s\otimes t)$,
is clearly an isomorphism of abelian groups.

The isomorphism $f:\ZZ\{e\}\cong \Z $ is the
morphism of $\HH M$-modules such that  $f_e(e)=e$. Observe that, for any $x\in M$, the isomorphism $f_x$ is the composite
$$
\ZZ\{e\}(x)=\Z\{(u,e)\mid ue=x\}=\ZZ\{(x,e)\}\cong \Z\{x\}=\Z(x).
$$

It is straightforward to see that the isomorphisms $f$ above are
natural and coherent, so that $\ZZ$ is actually a symmetric monoidal
functor.
\end{proof}

\begin{corollary} For $S$ and $T$ any two sets over $M$, the tensor product $\HH M$-module $\ZZ S\otimes_{\HH M}\ZZ T$ is free  on the set of elements
$s\otimes t$,  $s\in S$, $t\in T$, with  $\pi(s\otimes t)=\pi(s)\pi(t)$.
\end{corollary}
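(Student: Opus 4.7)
The plan is to deduce this as an immediate consequence of Propositions \ref{adfu} and \ref{fsmon}. First I would invoke Proposition \ref{fsmon} to obtain the natural isomorphism of $\HH M$-modules
$$f:\ZZ(S\times T)\ \cong\ \ZZ S\otimes_{\HH M}\ZZ T,$$
under which each generator $(s,t)\in S\times T$, viewed as an element of $\ZZ(S\times T)(\pi(s)\pi(t))$, is sent to $s\otimes t$. In particular, $s\otimes t$ lies in the component $(\ZZ S\otimes_{\HH M}\ZZ T)(\pi(s)\pi(t))$, which verifies the projection condition $\pi(s\otimes t)=\pi(s)\pi(t)$.

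Next I would apply Proposition \ref{adfu} to $S\times T$, regarded as a set over $M$ via $\pi(s,t)=\pi(s)\pi(t)$: this asserts that, for any $\HH M$-module $\A$ and any family of elements $a_{(s,t)}\in \A(\pi(s)\pi(t))$, one for each $(s,t)\in S\times T$, there exists a unique morphism of $\HH M$-modules $\ZZ(S\times T)\to \A$ carrying $(s,t)$ to $a_{(s,t)}$. Transporting this universal property along $f$ yields exactly what the corollary claims: for any such family there is a unique morphism $\ZZ S\otimes_{\HH M}\ZZ T\to \A$ sending each generator $s\otimes t$ to $a_{(s,t)}$. Therefore $\ZZ S\otimes_{\HH M}\ZZ T$ is free on the set of elements $s\otimes t$ with the stated projection.

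No significant obstacle is anticipated. The content of the corollary is just a rebranding of the universal property of $\ZZ(S\times T)$ across the strong symmetric monoidal isomorphism of Proposition \ref{fsmon}, i.e.\ the general principle that a strong monoidal left adjoint takes free objects to free objects.
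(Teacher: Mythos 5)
Your argument is correct and is exactly the one the paper intends: the corollary is stated without proof as an immediate consequence of Proposition \ref{fsmon}, whose isomorphism $\ZZ(S\times T)\cong \ZZ S\otimes_{\HH M}\ZZ T$ carries the free generators $(s,t)$ to $s\otimes t$, and the freeness then transports across via the universal property of Proposition \ref{adfu}. Nothing is missing.
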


Since the functor $\ZZ$ is symmetric
monoidal, it transports commutative monoids in
$\mathbf{Set}\!\downarrow\!_M$ to commutative monoids in
$\HH M\text{-}\mathrm{Mod}$, that is, to algebras over $\HH M$.
As a commutative monoid in the symmetric monoidal category $\mathbf{Set}\!\downarrow\!_M$ is merely a commutative monoid over $M$, that is,  a commutative monoid $S$ endowed with a homomorphism $\pi:S\to M$, the corollary below follows.

 \begin{corollary}\label{zstruc} If $S$ is a commutative monoid over $M$, then  the free $\HH M$-module $\ZZ S$ is an algebra over $\HH M$.  The multiplication morphism $\circ:\ZZ S\otimes_{\HH M}\ZZ S\to \ZZ S$ is the  composite
 $$\xymatrix{\ZZ S\otimes_{\HH M}\ZZ S\cong \ZZ(S\times S)\ar[r]^-{\ZZ \mathrm{m}}& \ZZ S,}$$
where $\mathrm{m}:S\times S\to S$ is the homomorphism of multiplication in $S$, $\mathrm{m}(s,s')=ss'$,
and the unit morphism $\iota:\ZZ \to \ZZ S$ is the  composite
 $\xymatrix{\Z \cong \ZZ\{e\}\ar[r]^-{\ZZ \mathrm{i}}& \ZZ S}$,
where $\mathrm{i}:\{e\}\to S$ is the trivial homomorphism mapping the unit  of $M$ to the unit  of $S$.
\end{corollary}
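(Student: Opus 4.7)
The plan is to obtain the corollary as a direct consequence of the general principle that symmetric monoidal functors transport commutative monoid objects to commutative monoid objects, applied to the functor $\ZZ$ via Proposition \ref{fsmon}. All real content has already been established; what remains is essentially bookkeeping.

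The first step is to identify commutative monoid objects in the symmetric monoidal category $(\mathbf{Set}\!\downarrow\!_M,\times,\{e\})$. Such an object consists of an $(S,\pi)$ together with morphisms $\mathrm{m}:S\times S\to S$ and $\mathrm{i}:\{e\}\to S$ in $\mathbf{Set}\!\downarrow\!_M$ satisfying the usual associativity, unit, and commutativity diagrams. Since on the cartesian product the projection is $\pi(s,s')=\pi(s)\pi(s')$ and on the unit $\pi(e)=e$, the condition that $\mathrm{m}$ and $\mathrm{i}$ are morphisms over $M$ amounts to $\pi(ss')=\pi(s)\pi(s')$ and $\pi(1_S)=e$. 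Thus a commutative monoid object in $\mathbf{Set}\!\downarrow\!_M$ is precisely a commutative monoid $S$ endowed with a monoid homomorphism $\pi:S\to M$.

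The second step is to feed this monoid object through $\ZZ$. By Proposition \ref{fsmon}, $\ZZ$ is symmetric strong monoidal with structural isomorphisms $\ZZ(S\times S)\cong \ZZ S\otimes_{\HH M}\ZZ S$ and $\ZZ\{e\}\cong \Z$. The general fact that a symmetric monoidal functor sends commutative monoid objects to commutative monoid objects then produces on $\ZZ S$ the structure of an algebra over $\HH M$, with multiplication and unit given by the composites
$$\ZZ S\otimes_{\HH M}\ZZ S\cong \ZZ(S\times S)\overset{\ZZ\mathrm{m}}{\longrightarrow}\ZZ S,\qquad \Z\cong \ZZ\{e\}\overset{\ZZ\mathrm{i}}{\longrightarrow}\ZZ S,$$
which are exactly the formulas asserted. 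The associativity, unit and commutativity axioms for $\ZZ S$ are obtained by applying $\ZZ$ to the corresponding commutative diagrams for $(S,\mathrm{m},\mathrm{i})$ and pasting in the naturality and coherence squares for the monoidal structure of $\ZZ$.

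No step presents a genuine obstacle; the substance is entirely contained in Proposition \ref{fsmon}. If one wished to avoid invoking the abstract principle, one could alternatively give a direct check by writing out the multiplication on generators: for $(u,s)\in \ZZ S(x)$ and $(v,s')\in \ZZ S(y)$, the composite above evaluates to $(u,s)\circ(v,s')=(uv,ss')\in \ZZ S(xy)$, and from this description the axioms \eqref{dga1}--\eqref{dga4} (in degree zero) are immediate from those of the commutative monoid $S$ together with the defining relations of $\ZZ S$. Either route is routine.
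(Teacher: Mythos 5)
Your proposal is correct and follows essentially the same route as the paper: the corollary is deduced there from Proposition \ref{fsmon} together with the observation that commutative monoid objects in $\mathbf{Set}\!\downarrow\!_M$ are exactly commutative monoids over $M$, so that the symmetric monoidal functor $\ZZ$ transports them to algebras over $\HH M$ with precisely the stated multiplication and unit composites. Your added explicit formula $(u,s)\circ(v,s')=(uv,ss')$ on generators is a harmless (and useful) supplement, but the substance of the argument coincides with the paper's.
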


\section{The cohomology groups $H^n(M,r;\A)$}\label{sect5}
Let us consider the commutative monoid $M$ over itself with $\pi=id_M:M\to M$. Then, by Corollary \ref{zstruc},  the free $\HH M$-module $\ZZ M$ is an algebra over $\HH M$. Explicitly, this is described as follows: For each $x\in M$,
$$\ZZ M(x)=\Z\{ (u,v) \mid uv=x\}$$ is the free
abelian group with generators all pairs $(u,v)\in M\times M$
such that $uv=x$.  For any $x, y\in M$, the homomorphism $y_*:\ZZ M(x)\to \ZZ M(xy)$ is given on generators by
 $y_*(u,v)=(yu,v)$, and the homomorphism of multiplication $$\circ: \ZZ M(x)\otimes \ZZ M(y)\to \ZZ M(xy)$$ is defined on generators by
$\xymatrix{(u,v)\otimes (w,t)\mapsto (u,v)\circ (w,t)= (uw,vt),}$  for any $u,v,w,t\in M$ such that $uv=x$ and $wt=y$. The unit is  $(e,e)\in \ZZ M(e)$.
We see each element $x\in M$ as an element of $\ZZ M(x)$ by means of the identification $x=(e,x)$, so that that any generator $(u,v)$ of $\ZZ M(x)$ can be write as $u_*v$.

By Proposition \ref{adfu}, if $\A$ is any $\HH M$-module, for any list of elements $a_x\in \A(x)$, one for each $x\in M$, there is an unique morphism of $\HH M$-modules $f:\ZZ M\to \A$ such that each homomorphism $f_x:\ZZ M(x)\to \A(x)$ verifies that $f_x(x)=a_x$ (explicitly, $f_x$ acts on generators by $f_x(u,v)=u_*a_v$). Furthermore, it is plain to see that, if $\A$ is an algebra over $\HH M$, then $f$ is a morphism of algebras if and only if $a_e=1$ and $a_x\circ a_y=a_{xy}$ for all $x,y\in M$.

Hereafter, we regard $\ZZ M$ as a commutative  DGA-algebra over $\HH M$ with the
trivial grading, that is, with $(\ZZ M)_n=0$ for $n>0$ and
$(\ZZ M)_0=\ZZ M$, and with augmentation the morphism of
$\HH M$-algebras $$\epsilon :\ZZ M\to \Z ,$$
such that, for any $x\in M$, $\epsilon_x(x)=x\in \Z(x)$. Then,  we define, for each integer $r\geq 1$,
{\em the $r$th level cohomology groups of the commutative monoid $M$ with coefficients in a $\HH M$-module $\A$} by
\begin{equation}\label{defhmra}
H^n(M,r;\A)=H^n(\ZZ M,r;\A), \hspace{0.4cm} n=0,1,\ldots \, ,
\end{equation}
or, in other words,
$$
H^n(M,r;\A)=H^n\big(\mathrm{Hom}_{\HH\! M}(\B^{r}\!(\ZZ M),\A)\big),
$$
where $\mathrm{Hom}_{\HH\! M}(\B^r\!(\ZZ M),\A)$ is the cochain complex obtained by applying the abelian group valued functor $\mathrm{Hom}_{\HH\! M}(-,\A)$ to the neglected chain complex of $\HH M$-modules  $\B^r\!(\ZZ M)$.

\begin{remark}\label{eqabhhm4} {\em When $M=G$ is an abelian group, $\ZZ G$ is isomorphic to the constant DGA-algebra  over $\HH G$ defined by the commutative DGA-ring $\ZZ G(e)$ (see Remark \ref{eqabhhm2}), which is itself isomorphic to the trivially graded  DGA-ring defined by the group ring
$\Z G$ with augmentation the ring homomorphism $\alpha:\Z G\to \Z$ such that $\alpha(x)=1$ for any $x\in G$. To see this, observe that $\ZZ G(e)$ is the commutative ring whose underlying abelian
group is freely generated by the elements of the form $(x^{-1},x)$, $x\in G$, with multiplication such that $(x^{-1},x)\circ (y^{-1},y)=((xy)^{-1},xy)$, and
unit $(e,e)=e$. The map $(x^{-1},x)\mapsto x$ clearly
determines a ring isomorphism between $\ZZ G(e)$ and the group ring
$\Z G$, which is compatible with the corresponding augmentations.

Hence, for any integer $r\geq 1$, $\B^r\!(\ZZ G)\cong \B^r\!(\Z G)$  (see Remark \ref{eqabhhm3})\footnote{The commutative DGA-rings $ \B^r\!(\Z G)$ are denoted by  $ A_{_N}(G,r)$ in \cite{E-M-I}}, and therefore for any abelian group $A$, regarded as a constant $\HH G$-module,
there are natural isomorphisms $$\mathrm{Hom}_{\HH G}(\B^r\!(\ZZ G),A)\cong \mathrm{Hom}_{\HH G}(\B^r\!(\Z G),A)\cong \mathrm{Hom}(\B^r\!(\Z G),A)$$
showing that the $r$th level cohomology groups $H^n(G,r;A)$ in \eqref{defhmra} agree with those by Eilenberg and Mac Lane in \cite{E-M-I},
which compute the cohomology of the spaces $K(G,r)$ by means of natural isomorphisms $H^n(K(G,r),A)\cong H^n(G,r;A)$.
}
\end{remark}

From here on, this section is  dedicated to show explicit cochain descriptions for some of these cohomology groups, starting with those of first level
  $$H^n(M,1;\A)=H^n\big(\mathrm{Hom}_{\HH\! M}(\B(\ZZ M),\A)\big).$$
Let us analyze the underlying complex $\B(\ZZ M)$.
For any integer $n\geq 1$,
$$\B(\ZZ M)_n=\overline{\ZZ M}\xymatrix{\otimes_{\HH\!M}}
\overset{(n\text{ factors})}\cdots
\xymatrix{\otimes_{\HH \!M}}\overline{\ZZ M},$$ where
$\overline{\ZZ M}=\ZZ M/\iota\Z=\ZZ M/\ZZ \{e\}\cong \ZZ M^*$ is a free $\HH M$-module on
$M^*=M\setminus\{e\}$ with $\pi:M^*\to M$ the
inclusion map. Then, by construction and Proposition \ref{fsmon}, we have
that

\vspace{0.2cm}
\begin{quote} $\bullet$ {\em The $\HH M$-module $\B(\ZZ M)_0$ is free on the unitary set $\{[\ \,]\}$ with $\pi[\ \,]=e$ and,  for any $n\geq 1$,  $\B(\ZZ M)_n$ is a
free $\HH M$-module generated by the set over $M$ consisting of
$n$-tuples of elements of $M$
$$
\alpha_n=[x_1|\cdots|x_n], \hspace{0.4cm} \text{with }\ \pi\alpha_n=x_1\cdots
x_n,
$$
 which we call {\em generic $n$-cells} of $\B(\ZZ M)$, with the relations
$\alpha_n=0$ whenever some $x_i=e$.}
\end{quote}
\begin{quote} $\bullet$ {\em The differential
$\partial:\B(\ZZ M)_n\to \B(\ZZ M)_{n-1}$ is the morphism of
$\HH M$-modules such that, for each $x\in M$ and any generic $n$-cell $[x_1|\cdots|x_n]$ with $x_1\cdots x_n=x$,
\begin{align}\nonumber
\partial_x[x_1|\cdots|x_n]=&\
 x_{1*}[x_2|\cdots|x_n]
+\sum_{i=1}^{n-1}(-1)^{i}[x_1|\cdots|x_i x_{i+1}|\cdots|x_n]\\[4pt] \nonumber
& +(-1)^{n} x_{n*}[x_1|\cdots|
x_{n-1}].
\end{align}
}
\end{quote}

Hence, Proposition \ref{adfu} gives the following.

\begin{theorem}For any $\HH M$-module $\A$, the cohomology groups $H^n(M,1;\A)$
can be computed as the cohomology groups of the {\em cochain complex of normalized $1$st level cochains of $M$ with values in $\A$},
\begin{equation}\label{cm1a}C(M,1;\A):\
0\to C^0(M,1;\A)\overset{\partial^0}\longrightarrow C^1(M,1;\A)\overset{\partial^1}\longrightarrow C^2(M,1;\A)\overset{\partial^2}\longrightarrow \cdots,
\end{equation}
where

$\bullet$  $C^0(M,1;\A)=\A(e)$, and for $n\geq 1$, $C^n(M,1;\A)$ is the abelian group, under pointwise addition,
of functions $$\xymatrix{f:M^n\to \bigcup_{x\in M}\A(x)}$$ such that $f(x_1,\ldots,x_n)\in \A(x_1\cdots x_n)$ and $f(x_1,\ldots,x_n)=0$ whenever some $x_i=e$,

 $\bullet$  $\partial^0=0$, and for $n\geq 1$, the coboundary $\partial^n:C^n(M,1;\A)\to C^{n+1}(M,1;\A)$ is given by
\begin{align} \nonumber
(\partial^{n} f)(x_1,\cdots,x_{n+1})=&\
 x_{1*}f(x_2,\cdots,x_{n+1})
+\sum_{i=1}^{n}(-1)^{i}f(x_1,\cdots,x_i x_{i+1},\cdots,x_{n+1})\\[4pt] \nonumber
& +(-1)^{n+1} x_{n+1*}f(x_1,\cdots,x_{n}).
\end{align}

\end{theorem}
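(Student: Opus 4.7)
The plan is to apply the free $\HH M$-module description of each $\B(\ZZ M)_n$ together with the adjunction of Proposition \ref{adfu} to identify $\mathrm{Hom}_{\HH M}(\B(\ZZ M)_n,\A)$ with $C^n(M,1;\A)$, and then to dualize the explicit differential $\partial$ given just above the statement to read off the coboundary.

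First I would note that, by the two bulleted facts that immediately precede the theorem, $\B(\ZZ M)_n$ is the free $\HH M$-module on the set over $M$ of generic $n$-cells $[x_1|\cdots|x_n]$ (with $\pi[x_1|\cdots|x_n]=x_1\cdots x_n$), modulo the relations forcing $[x_1|\cdots|x_n]=0$ when some $x_i=e$. By Proposition \ref{adfu}, a morphism of $\HH M$-modules $f:\B(\ZZ M)_n\to\A$ corresponds bijectively to a choice, for each generic $n$-cell $\alpha_n=[x_1|\cdots|x_n]$, of an element $f(\alpha_n)\in\A(x_1\cdots x_n)$, subject to $f(\alpha_n)=0$ whenever some $x_i=e$. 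For $n=0$ this is an element of $\A(e)$, and for $n\geq 1$ this is exactly a function $f:M^n\to\bigcup_{x\in M}\A(x)$ satisfying the two conditions defining $C^n(M,1;\A)$. This identification is clearly an isomorphism of abelian groups, natural in $\A$, so
\[
\mathrm{Hom}_{\HH M}(\B(\ZZ M)_n,\A)\;\cong\;C^n(M,1;\A).
\]

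Next I would compute the induced coboundary. For $f\in C^n(M,1;\A)$ and a generator $[x_1|\cdots|x_{n+1}]$, the value $(\partial^n f)(x_1,\ldots,x_{n+1})$ is by definition $f\bigl(\partial_x[x_1|\cdots|x_{n+1}]\bigr)$ where $x=x_1\cdots x_{n+1}$. Substituting the explicit formula displayed just above the theorem for $\partial_x[x_1|\cdots|x_{n+1}]$, and applying the morphism $f$ (which, being a morphism of $\HH M$-modules, commutes with each $x_{i\,*}$), I get precisely the stated alternating sum. This is the only real calculation, and it is routine bookkeeping rather than a genuine obstacle.

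Finally, passing from the level of $\HH M$-modules to cochain complexes, the family of isomorphisms $\mathrm{Hom}_{\HH M}(\B(\ZZ M)_n,\A)\cong C^n(M,1;\A)$ is compatible with the coboundary maps (as just verified), hence defines an isomorphism of cochain complexes $\mathrm{Hom}_{\HH M}(\B(\ZZ M),\A)\cong C(M,1;\A)$. Taking cohomology yields $H^n(M,1;\A)\cong H^n(C(M,1;\A))$, as claimed. The main (and only) point requiring care is checking the normalization clause: one must confirm that the image of $\partial$ restricted to normalized cells lands in normalized cells, which follows directly from the normalization of the bar construction noted after \eqref{barnot}; no further argument is needed.
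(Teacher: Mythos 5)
Your proposal is correct and follows exactly the paper's own route: the paper derives this theorem in one line ("Hence, Proposition \ref{adfu} gives the following") from the freeness of each $\B(\ZZ M)_n$ on the normalized generic $n$-cells and the displayed formula for $\partial$, which is precisely the identification and dualization you carry out. You have simply made explicit the bookkeeping the paper leaves to the reader.
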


Let us now recall that the so-called Leech cohomology groups \cite{leech} of a (not necessarily commutative) monoid $M$,
which we denote here by $ H^n_{^\mathrm{L}}(M,\A)$, take coefficients in $\DD M$-modules, that is, in abelian group valued functors on the category $\DD M$,
whose set of objects is $M$ and set of arrows $M\times M\times M$, where $(x,y,z):y\to xyz$. Composition is given by $(u,xyz,v)(x,y,z)=(ux,y,zv)$,
and the identity morphism of any object $x$ is $(e,x,e):x\to x$.
For any $\DD M$-module $\A:\DD M\to \mathbf{Ab}$, if  we write $\A(x,y,z)=x_*z^*:\A(y)\to \A(xyz)$, then we see that $\A$ consists of abelian groups $\A(x)$, one for each $x\in M$, and homomorphisms
$$ x_*:\A(y)\to \A(xy), \hspace{0.2cm} x^*:\A(y)\to \A(yx),$$
 for each $x,y\in M$, such that the equations below hold.
$$\begin{array}{cc}x_*y_*=(xy)_*: \A(z)\to \A(xyz),& y^*x^*=(xy)^*: \A(z)\to \A(zxy),\\[3pt]
e_*=e^*=id_{\A(x)}:\A(x)\to \A(x),&
x_*y^*=y^*x_*:\A(z)\to \A(xzy).
\end{array}$$

When the monoid $M$ is commutative, as it is in our case, there is a full functor $\DD M\to \HH M$, which is the identity on objects and carries a morphism $(x,y,z):y\to xyz$ of $\DD M$ to the morphism $(y,xz):y\to xyz$ of $\HH M$. Composition with this functor induces a full embedding of $\HH M$-Mod into $\DD M$-mod, whose image consists of the {\em symmetric $\DD M$-modules}, that is, those satisfying that   $x_*=x^*:\A(y)\to \A(xy)$, for all $x,y\in M$ \cite[Chapter II, 7.15]{leech}. Thus, $\HH M$-modules and symmetric $\DD M$-modules are the same thing.

As a direct inspection shows that, for any $\HH M$-module $\A$,  the cochain complex $C(M,1;\A)$ in \eqref{cm1a} coincides with the {\em standard normalized cochain complex} of $M$ with coefficients in $\A$ by Leech \cite[Chapter II, 2.12]{leech}, next theorem follows.

\begin{proposition}\label{h1l} For any $\HH M$-module $\A$, there are natural isomorphisms
$$H^n(M,1;\A)\cong H^n_{^{_\mathrm{L}}}(M,\A),  \hspace{0.4cm} n=0,1,\ldots \, .$$
\end{proposition}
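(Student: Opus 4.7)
The plan is essentially to verify that the cochain complex $C(M,1;\A)$ exhibited in the preceding theorem is literally Leech's normalized cochain complex for $M$ with coefficients in $\A$, once $\A$ is regarded as a (symmetric) $\DD M$-module via the functor $\DD M\to \HH M$ described just before the statement. Since both cohomology theories are defined as the cohomology of these cochain complexes, identifying the complexes will give the natural isomorphisms at once.

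First I would recall Leech's standard normalized cochain complex from \cite[Chapter II, 2.12]{leech}: for a $\DD M$-module $\A$, an $n$-cochain ($n\ge 1$) is a function $f:M^n\to \bigcup_{x\in M}\A(x)$ with $f(x_1,\ldots,x_n)\in \A(x_1\cdots x_n)$, vanishing whenever some $x_i=e$, and the Leech coboundary is
\begin{equation*}
(\delta f)(x_1,\ldots,x_{n+1})= x_{1*}f(x_2,\ldots,x_{n+1})+\sum_{i=1}^{n}(-1)^i f(x_1,\ldots,x_ix_{i+1},\ldots,x_{n+1}) + (-1)^{n+1}x_{n+1}^{*}f(x_1,\ldots,x_n),
\end{equation*}
while in degree $0$ one takes $C^0=\A(e)$ with $\delta^0=0$. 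Second, I would invoke the remark recalled in the paragraph before the statement: a $\HH M$-module, viewed as a $\DD M$-module through the canonical full functor $\DD M\to \HH M$, is precisely a \emph{symmetric} $\DD M$-module, i.e.\ one with $x_*=x^{*}$ for every $x\in M$. Substituting $x_{n+1}^{*}=x_{n+1*}$ in the Leech coboundary above produces exactly the formula given for $\partial^{n}:C^n(M,1;\A)\to C^{n+1}(M,1;\A)$ in the theorem preceding the statement.

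Third, I would check the cochain groups themselves coincide: the normalization condition and the requirement $f(x_1,\ldots,x_n)\in \A(x_1\cdots x_n)$ are common to both descriptions, and the $0$-cochains are $\A(e)$ on both sides with zero coboundary. Thus the complexes $C(M,1;\A)$ and Leech's normalized complex $C_{\text{L}}^{\bullet}(M,\A)$ agree on the nose, which yields the desired natural isomorphisms $H^n(M,1;\A)\cong H^n_{\text{L}}(M,\A)$, naturality being immediate since both sides are functorial in $\A$ (in the category of $\HH M$-modules $=$ symmetric $\DD M$-modules) via the same formulas.

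The only genuine content is the bookkeeping in step one, namely rewriting Leech's two-sided coboundary in terms of the single family of morphisms $y_*$ provided by the $\HH M$-module structure. There is no real obstacle: the symmetry $x_*=x^{*}$ is exactly what makes the outer summands in Leech's coboundary collapse to the single left-action-style terms $x_{1*}f(\ldots)$ and $(-1)^{n+1}x_{n+1*}f(\ldots)$ appearing in $\partial^{n}$. So the ``proof'' is essentially the remark already made in the paragraph introducing the proposition.
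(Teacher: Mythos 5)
Your proposal is correct and is essentially the paper's own argument: the authors simply observe that ``a direct inspection shows'' the cochain complex $C(M,1;\A)$ obtained from $\B(\ZZ M)$ coincides with Leech's standard normalized cochain complex once $\A$ is viewed as a symmetric $\DD M$-module (so that $x_*=x^*$ collapses the two-sided coboundary to the formula in the preceding theorem). You have merely spelled out the inspection the paper leaves implicit.
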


 We now analyze the complex of $\HH M$-modules $\B^{r}\!(\ZZ M)$ for $r\geq 2$ any integer. By construction,
\begin{quote}
  $\bullet$  {\em $\B^{r}\!(\ZZ M)_0$
is the free $\HH M$-module on the unitary set  consisting of the 0-tuple $$[\ \,],\hspace{0.3cm}\text{ with } \pi[\ \,]=e,$$ which we call the generic $0$-cell of $\B^{r}\!(\ZZ M)$},
\end{quote}

\vspace{0.2cm}
\noindent and, for $n\geq 1$,
$$\B^{r}\!(\ZZ M)_n=\bigoplus\limits_{
p+\sum n_i=n}\overline{\B^{r-1}\!(\ZZ M)}_{n_1}\otimes_{\HH\!M} \cdots
\otimes_{\HH\!M}\overline{\B^{r-1}\!(\ZZ M)}_{n_p}.$$ Since
$\overline{\B^{r-1}\!(\ZZ M)}_0=0$ while, for $n_i\geq 1$,
$\overline{\B^{r-1}\!(\ZZ M)}_{n_i}=\B^{r-1}\!(\ZZ M)_{n_i}$, it follows by
induction on $r$ that

\begin{quote} $\bullet$ $\B^{r}\!(\ZZ M)_n=0$ {\em  for $0<n<r$,}
\end{quote}

\vspace{0.2cm}
\noindent and that, for any $r\leq n$,
$$\B^{r}\!(\ZZ M)_n=\bigoplus\limits_{\scriptsize \begin{array}{c}
n_1,\dots,n_p\geq r-1\\
p+\sum n_i=n
\end{array}}\hspace{-0.6cm} \B^{r-1}\!(\ZZ M)_{n_1}\otimes_{\HH\!M} \cdots
\otimes_{\HH\!M}\B^{r-1}\!(\ZZ M)_{n_p}.$$
Then, if
we denote by $|_{_r}$ the symbol $|$ used for the tensor product in
the construction of $\B^{r}\!(\ZZ M)$ from $\B^{r-1}\!(\ZZ M)$,  by Proposition
\ref{fsmon} and induction, we see that

\begin{quote} $\bullet$ {\em  $\B^{r}\!(\ZZ M)_n$, for  $r\leq n$,
is a free $\HH M$-module generated by the set over $M$ consisting
of all $p$-tuples, which we call {\em generic $n$-cells} of
$\B^{r}\!(\ZZ M)$,
$$\alpha_n=[\alpha_{n_1}|_{_r}\alpha_{n_2}|_{_r}
\cdots|_{_r} \alpha_{n_p}], \hspace{0.4cm} \text{ with }\
\pi\alpha_n= \pi\alpha_{n_1}\cdots
\pi\alpha_{n_p},$$ of generic $n_i$-cells of
$\B^{r-1}\!(\ZZ M)$, such that $n_i\geq r-1$ and $p+\sum n_i=n$, with the
relations $\alpha_n=0$ whenever some $\alpha_{n_i}=0$.}
\end{quote}

\vspace{0.2cm}Let us stress that a generic $n$-cell $\alpha_n$ of any  $\B^{r}\!(\ZZ M)$
is actually a  generator of the abelian group
$\B^{r}\!(\ZZ M)_{n}(\pi\alpha_n)$. Indeed, for each $x\in M$,
$\B^{r}\!(\ZZ M)_{n}(x)$ is the free abelian group generated by the elements
$u_*\alpha_n$ with $u$ an element of $M$ and the $\alpha_n$ any
non-zero generic $n$-cell of $\B^{r}\!(\ZZ M)$ such that
$u\,\pi\alpha_n=x$.  Arbitrary elements of the groups $\B^{r}\!(\ZZ M)_{n}(x)$, are referred as
{\em $n$-chains of $\B^{r}\!(\ZZ M)$}.

For any $r\geq 1$,  the  multiplication
$\circ_{^r}$ of $\B^{r}\!(\ZZ M)$ is given by the morphism
 of $\HH M$-modules $$
\circ_{^r}:{\B^{r}\!(\ZZ M)_{n}\otimes_{\HH\!M} \B^{r}\!(\ZZ M)_{m}\to
\B^{r}\!(\ZZ M)_{n+m}}$$ which, according to Proposition \ref{adfu}, are determined on generic cells
by the shuffle product
$$
[\alpha_{n_1}|_{_r}\cdots|_{_r}\alpha_{n_{p}}] \circ_{^r}
[\alpha_{n_{p+1}}|_{_r}\cdots|_{_r}\alpha_{n_{p+q}}]=\sum_\sigma (-1)^{e(\sigma)}
[\alpha_{n_{\sigma^{\scriptsize {\text -1}}(1)}}|_{_r}\cdots|_{_r}
\alpha_{n_{\sigma^{\scriptsize {\text -1}}(p+q)}}],
$$
where the sum is taken over all $(p,q)$-shuffles $\sigma$ and $
e(\sigma)=\sum (1+n_i)(1+n_{p+j})$ summed over all pairs $(i,p+j)$
such that $\sigma(i)>\sigma(p+j)$. In particular, for  $r=1$,
\begin{equation}\label{sp1}[x_1|\cdots|x_n]\circ_{^1} [x_{n+1}|\cdots|x_{n+m}]=\sum_\sigma
(-1)^{e(\sigma)} [x_{\sigma^{\scriptsize {\text -1}}(1)}|\cdots| x_{\sigma^{\scriptsize {\text -1}}(n+m)}],
\end{equation}
where the sum is taken over all $(n,m)$-shuffles $\sigma$ and
$e(\sigma)$ is the sign of the shuffle.

Then, for $r\geq 2$,
\begin{quote} $\bullet$ {\em the boundary $\partial:\B^{r}\!(\ZZ M)_n\to
\B^{r}\!(\ZZ M)_{n-1}$ is the morphism of $\HH M$-modules recursively
defined, on any generic $n$-cell
$\alpha_n=[\alpha_{n_1}|_{_r}\cdots|_{_r}\alpha_{n_{p}}]$ of
$\B^{r}\!(\ZZ M)$ with $\pi\alpha_n=x$ and $\pi\alpha_{n_i}=x_i$, by
\begin{align} \nonumber
\partial_x \alpha_n=&\
 -\sum_{i=1}^{p}(-1)^{e_{i-1}}[\alpha_{n_1}|_{_r}\cdots|_{_r}\alpha_{n_{i-1}}|_{_r}
 \partial_{x_i}\alpha_{n_i}
|_{_r}\alpha_{n_{i+1}}|_{_r}\cdots|_{_r}\alpha_{n_p}]\\
\nonumber
&+\sum_{i=1}^{p-1}(-1)^{e_i}[\alpha_{n_1}|_{_r}\cdots|_{_r}\alpha_{n_{i-1}}|_{_r}
\alpha_{n_i}\!
\circ_{^{\text{r-}1}}\! \alpha_{n_{i+1}}|_{_r}\alpha_{n_{i+2}}|_{_r}\cdots|_{_r}\alpha_{n_p}],
\end{align}
where the exponents $e_i$ of the signs are $e_i=i+\sum n_i$.}
\end{quote}
In the above formula, the term $\partial_{x_i}\alpha_{n_i}$, which refers to the
differential of $\alpha_{n_i}$ in $\B^{r-1}\!(\ZZ M)$, or $\alpha_{n_i}\!
\circ_{^{\text{r-}1}}\! \alpha_{n_{i+1}}$,  is not in general a generic cell of
$\B^{r-1}\!(\ZZ M)$ but a chain; the term is to be expanded by linearity.

Recall now that we have the embedding suspensions  \eqref{sus},
$\s:\B^{r-1}\!(\ZZ M)\hookrightarrow \B^{r}\!(\ZZ M)$, through which we
identify any generic $(n-1)$-cell $\alpha_{n-1}$ of $\B^{r-1}\!(\ZZ M)$ with
the generic $n$-cell $\s\alpha_{n-1}=[\alpha_{n-1}]$ of $\B^{r}\!(\ZZ M)$.
 Hence, by induction, one proves that
 any generic $n$-cell of any $\B^{r}\!(\ZZ M)$
   can be uniquely written in the form
$$
\alpha_n=[x_1|_{_{k_1}}x_2 |_{_{k_2}}\cdots|_{_{k_{m-1}}}x_m]
$$
with $x_i\in M$, $1\leq m$,  $1\leq k_i\leq r$, and
$r+\sum_{i=1}^{m-1}k_i=n$. So written, we have
$\pi\alpha_n=x_1\cdots x_m$, and $\alpha_n=0$ if  $x_i=e$
 for some $i$. Observe that if some $k_i=r$, then $n\geq 2r$. Indeed, the
generic $n$-cells of lowest $n$ appearing in $\B^{r}\!(\ZZ M)$  but not in
$\B^{r-1}\!(\ZZ M)$ are those generic $2r$-cells of the form
$[x_1|_{_{r}}x_2]$. Thus, via the suspension morphism,
$\B^{r-1}\!(\ZZ M)_{n-1}$ is identified with $\B^{r}\!(\ZZ M)_n$ for $r\leq n<2r$,
while $\B^{r-1}\!(\ZZ M)_{n-1}\varsubsetneq \B^{r}\!(\ZZ M)_n$ for $n\geq 2r$. In particular,
we have the commutative diagram of suspensions
$$
 \xymatrix@C=16pt{  \B(\ZZ M)_4\ar[r]\ar@{^(->}[d]_{\s}& \B(\ZZ M)_3
 \ar[r]\ar@{^(->}[d]_{\s}&\B(\ZZ M)_2\ar[r]\ar@{=}[d]_{\s}&\B(\ZZ M)_1
 \ar@{=}[d]_{\s}\ar[r]&\B(\ZZ M)_0\ar[d] \\
 \B^{2}\!(\ZZ M)_5\ar[r]\ar@{^(->}[d]_{\s}& \B^{2}\!(\ZZ M)_4\ar[r]\ar@{=}[d]_{\s}&
 \B^{2}\!(\ZZ M)_3\ar[r]\ar@{=}[d]_{\s}&\B^{2}\!(\ZZ M)_2\ar[r]\ar@{=}[d]_{\s}&0 \\
  \B^{3}\!(\ZZ M)_6\ar[r]\ar@{=}[d]_{\s^{r-3}}& \B^{3}\!(\ZZ M)_5\ar[r]\ar@{=}[d]_{\s^{r-3}}&
  \B^{3}\!(\ZZ M)_4\ar[r]\ar@{=}[d]_{\s^{r-3}}&\B^{3}\!(\ZZ M)_3\ar@{=}[d]_{\s^{r-3}}\ar[r]&0
   \\
   \B^{r}\!(\ZZ M)_{r+3}\ar[r]& \B^{r}\!(\ZZ M)_{r+2}\ar[r]&\B^{r}\!(\ZZ M)_{r+1}\ar[r]&\B^{r}\!(\ZZ M)_r\ar[r]&0
 }
$$
where in the bottom row is $r\geq 3$, and

\vspace{0.2cm}\begin{quote}
 $\bullet$ {\em $\B^{2}\!(\ZZ M)_4$ is the free $\HH M$-module on the set
 of suspensions of the non-zero generic $3$-cells
 $[x_1|x_2|x_3]$  of $\B(\ZZ M)$ together the  non-zero generic
 $4$-cells
$$[x_1|\!|x_2],$$
with $\pi [x_1|\!|x_2]=x_1x_2$, and whose differential is ($x=x_1x_2$)
 \begin{align}\nonumber
  \partial_x[x_1|\!|x_2]&=[x_1|x_2]-[x_2|x_1].
\end{align}}
\end{quote}

\begin{quote}
$\bullet$ {\em $\B^{2}(\ZZ M)_5$ is  the free $\HH M$-module on the set
  of suspensions of the non-zero generic $4$-cells
 $[x_1|x_2|x_3|x_4]$  of $\B(\ZZ M)$ together the non-zero generic
 $5$-cells
$$[x_1|\!|x_2|x_3],\ [x_1|x_2|\!|x_3],$$
 with $\pi [x_1|\!|x_2|x_3]=x_1x_2x_3=\pi[x_1|x_2|\!|x_3]$, and
whose differential is ($x=x_1x_2x_3$)
\begin{align}\nonumber
 \partial_x[x_1|\!|x_2|x_3]=&-x_{2*}[x_1|\!|x_3]+[x_1|\!|x_2x_3]-x_{3*}[x_1|\!|x_2]\\ \nonumber &+[x_1|x_2|x_3]-[x_2|x_1|x_3]+[x_2|x_3|x_1],
\end{align}
\begin{align}\nonumber
 \partial_x[x_1|x_2|\!|x_3]=&-x_{1*}[x_2|\!|x_3]+[x_1x_2|\!|x_3]-x_{2*}[x_1|\!|x_3]\\ \nonumber &-[x_1|x_2|x_3]+[x_1|x_3|x_2]-[x_3|x_1|x_2].
\end{align}}
\end{quote}

\begin{quote}
$\bullet$ {\em $\B^{3}\!(\ZZ M)_6$ is the free $\HH M$-module on the set
  of double suspensions of the non-zero generic $4$-cells
 $[x_1|x_2|x_3|x_4]$  of $\B(\ZZ M)$, together with the suspensions
 of the non-zero generic $5$-cells $[x_1|\!|x_2|x_3]$ and
 $[x_1|x_2|\!|x_3]$ of $\B^{2}\!(\ZZ M)$,  and the  non-zero generic
 6-cells
$$[x_1|\!|\!|x_2],$$
with $\pi[x_1|\!|\!|x_2]=x_1x_2$, whose differential is ($x=x_2x_2$)
 \begin{align}\nonumber
  \partial_x[x_1|\!|\!|x_2]=-[x_1|\!|x_2]-[x_2|\!|x_1].
\end{align}}
\end{quote}

Therefore, from Proposition \ref{adfu}, we get the following.

\begin{theorem}For any $\HH M$-module $\A$, the cohomology groups $H^n(M,r;\A)$, for $n\leq r+2$, are isomorphic to the cohomology groups of
the truncated {\em cochain complexes of normalized $r$th level cochains of $M$ with values in $\A$}, $C(M,r;\A)$,
\begin{equation}\label{cmra}\xymatrix@C=14pt@R=18pt{C(M,r;\A):\hspace{0.3cm} 0\ar[r]& C^0(M,r;\A)\ar[r]&0\longrightarrow\cdots\longrightarrow 0\ar[r]&C^r(M,r;\A)\ar[lld]\\ &
C^{r+1}(M,r;\A)\ar[r]&C^{r+2}(M,r;\A)\ar[r]&C^{r+3}(M,r;\A)
}
\end{equation}
where $C^0(M,r;\A)=\A(e)$, and the remaining non-trivial parts occur in the commutative diagram
\begin{equation}\label{dcs}\begin{array}{c}
 \xymatrix@C=16pt@R=18pt{ 0\ar[r]&C^1(M,1;\A)\ar[r]\ar@{=}[d]&C^2(M,1;\A)\ar[r]\ar@{=}[d]&C^3(M,1;\A)\ar[r]&C^4(M,1;\A)\\
 0\ar[r]&C^2(M,2;\A)\ar[r]\ar@{=}[d]&C^3(M,2;\A)\ar[r]\ar@{=}[d]&
 C^4(M,2;\A)\ar@{->>}[u]_{\s^*}\ar[r]\ar@{=}[d]&C^5(M,2;\A)\ar@{->>}[u]_{\s^*}\\
 0\ar[r]&C^3(M,3;\A)\ar[r]\ar@{=}[d]&C^4(M,3;\A)\ar[r]\ar@{=}[d]&C^5(M,3;\A)\ar[r]\ar@{=}[d]
 &C^6(M,3;\A)\ar@{=}[d]\ar@{->>}[u]_{\s^*}\\
 0\ar[r]&C^r(M,r;\A)\ar[r]&C^{r+1}(M,r;\A)\ar[r]&C^{r+2}(M,r;\A)\ar[r]&C^{r+3}(M,r;\A)
 }\end{array}
\end{equation}
where in the bottom row is $r\geq 3$, and

\vspace{0.2cm}
$\bullet$ $C^4(M,2;\A)$ is the abelian group, under pointwise addition, of pairs  of functions $(g,\mu)$, where
 $$
 \xymatrix{g:M^3\to \bigcup_{x\in M}\A(x)&\mu:M^2\to \bigcup_{x\in M}\A(x),}
 $$
 with $g(x,y,z)\in \A(xyz)$ and $\mu(x,y)\in \A(xy)$, which are normalized in the sense that they take the value 0 whenever some of their arguments are equal to the unit $e$ of the monoid.

 \vspace{0.2cm} $\bullet$ The coboundary $\partial:C^3(M,2;\A)=C^2(M,1;\A)\to C^4(M,2;\A)$ acts on a normalized $2$-cochain $f$ of $M$ in $\A$ by $\partial f=(g,\mu)$, where
\begin{align}\nonumber
 g(x,y,z)&=-x_*f(y,z)+f(xy,z)-f(x,yz)+z_*f(xy),\\ \nonumber
 \mu(x,y)&=f(x,y)-f(y,x).
 \end{align}

 $\bullet$ $C^5(M,2;\A)$ is the abelian group of triples  $(h,\gamma,\delta)$ consisting of normalized functions
 $$
 \xymatrix{h:M^4\to \bigcup_{x\in M}\A(x)&\gamma,\delta:M^3\to \bigcup_{x\in M}\A(x),}
 $$
 with $h(x,y,z,t)\in \A(xyzt)$ and $\gamma(x,y,z),\delta(x,y,z)\in \A(xyz)$.

\vspace{0.2cm}
 $\bullet$  The coboundary $\partial:C^4(M,2;\A)\to C^5(M,2;\A)$ acts on a $2$nd level $4$-cochain $(g,\mu)$  by $\partial(g,\mu)=(h,\gamma,\delta)$, where
\begin{align}\nonumber  h(x,y,z,t)&=
-x_*g(y,z,t)+g(xy,z,t)-g(x,yz,t)+g(x,y,zt)-
t_*g(x,y,z),\\ \nonumber
\gamma (x,y,z)&= -y_*\mu(x,z)+\mu(x,yz)-z_*\mu(x,y)+g(x,y,z)-g(y,x,z)+g(y,z,x),\\  \nonumber
\delta (x,y,z)&= -x_*\mu(y,z)+\mu(xy,z)-y_*\mu(x,z)-g(x,y,z)+g(x,z,y)-g(z,x,y).
  \end{align}

 $\bullet$  $C^6(M,3;\A)$ is the abelian group of quadruples  $(h,\gamma,\delta,\xi)$ consisting of normalized functions
 $$
 \xymatrix@C=5pt{h:M^4\to \bigcup_{x\in M}\A(x),&\gamma,\delta:M^3\to \bigcup_{x\in M}\A(x),&\xi:M^2\to \bigcup_{x\in M}\A(x),}
 $$
 with $h(x,y,z,t)\in \A(xyzt)$, $\gamma(x,y,z),\delta(x,y,z)\in \A(xyz)$, and $\xi(x,y)\in \A(xy)$.

\vspace{0.2cm}
 $\bullet$  The coboundary $\partial:C^5(M,3;\A)=C^4(M,2;\A)\to C^6(M,3;\A)$ acts on a $3$rd-level $5$-cochain by $\partial(g,\mu)=(h,\gamma,\delta,\xi)$, where
\begin{align}\nonumber  h(x,y,z,t)&=
x_*g(y,z,t)-g(xy,z,t)+g(x,yz,t)-g(x,y,zt)+
t_*g(x,y,z),\\ \nonumber
\gamma (x,y,z)&= y_*\mu(x,z)-\mu(x,yz)+z_*\mu(x,y)-g(x,y,z)+g(y,x,z)-g(y,z,x),\\  \nonumber
\delta (x,y,z)&= x_*\mu(y,z)-\mu(xy,z)+y_*\mu(x,z)+g(x,y,z)-g(x,z,y)+g(z,x,y)\\ \nonumber
\xi(x,y)&=-\mu(x,y)-\mu(y,x).
  \end{align}

\end{theorem}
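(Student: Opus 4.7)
The plan is to deduce everything from the explicit description of the generators of $\B^r(\ZZ M)_n$ for $n\le r+3$ that has already been worked out in the preceding paragraphs, combined with the adjunction of Proposition \ref{adfu}.

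First I would record the dualization dictionary. Since each $\B^r(\ZZ M)_n$ is a free $\HH M$-module on the set of non-zero generic $n$-cells (equipped with its canonical projection $\pi$ to $M$), Proposition \ref{adfu} identifies
$$
\mathrm{Hom}_{\HH M}\bigl(\B^r(\ZZ M)_n,\A\bigr)\;\cong\;\prod_{\alpha_n}\A(\pi\alpha_n),
$$
the product being over the non-zero generic $n$-cells. A morphism $f:\B^r(\ZZ M)_n\to \A$ thus corresponds to a tuple of elements $f(\alpha_n)\in \A(\pi\alpha_n)$, one per generic $n$-cell, vanishing on any cell with a coordinate equal to $e$. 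This yields the abelian-group descriptions in the statement: $C^n(M,1;\A)$ is a group of normalized functions indexed by $M^n$, while $C^4(M,2;\A)$, $C^5(M,2;\A)$ and $C^6(M,3;\A)$ are tuples of normalized functions indexed by the generic-cell types $[x_1|\cdots|x_n]$, $[x_1|\!|x_2]$, $[x_1|\!|x_2|x_3]$, $[x_1|x_2|\!|x_3]$, $[x_1|\!|\!|x_2]$ already enumerated.

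Next I would read off the coboundaries. By definition $\partial^{n}:C^n(M,r;\A)\to C^{n+1}(M,r;\A)$ is precomposition with the boundary $\partial:\B^r(\ZZ M)_{n+1}\to \B^r(\ZZ M)_n$. Hence I take the boundary formulas already displayed for the generic cells $[x_1|\cdots|x_n]$, $[x_1|\!|x_2]$, $[x_1|\!|x_2|x_3]$, $[x_1|x_2|\!|x_3]$ and $[x_1|\!|\!|x_2]$, evaluate a cochain on them, and collect the output; the homomorphism $y_*:\A(x)\to\A(xy)$ dualizes verbatim to the operator $y_\ast$ that appears in the stated formulas for $g,\mu,h,\gamma,\delta,\xi$. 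The only delicate step is the appearance of shuffle products $\circ_{^{r-1}}$ inside $\partial$: at the low dimensions in play these reduce either to a single concatenation $[x_i\,x_{i+1}]$ (in $\B(\ZZ M)$) or to a signed $(1,1)$- or $(1,2)$-shuffle such as $[x_1|x_2|x_3]-[x_2|x_1|x_3]+[x_2|x_3|x_1]$ via \eqref{sp1}, which is exactly what produces the alternating terms in the formulas for $\gamma$ and $\delta$.

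For the commutative diagram \eqref{dcs} I would invoke the already-established fact that the suspension $\s:\B^{r-1}(\ZZ M)\hookrightarrow \B^r(\ZZ M)$ is a degree-one monomorphism of complexes which is an equality in the range $r\le n<2r$ and a proper inclusion for $n\ge 2r$. Applying $\mathrm{Hom}_{\HH M}(-,\A)$ turns these into identities (in the equality range) or into the surjections labelled $\s^\ast$ (when $n\ge 2r$), producing precisely the shape of \eqref{dcs}. Finally, since $\B^r(\ZZ M)_n=0$ for $0<n<r$, the truncated complex \eqref{cmra} computes $H^n(M,r;\A)$ correctly for all $n\le r+2$: modifications to $\B^r(\ZZ M)_n$ for $n\ge r+4$ can only alter the cohomology in degrees $\ge r+3$.

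The main obstacle I expect is the sign and shuffle bookkeeping in the derivation of the formulas for $(g,\mu)$, $(h,\gamma,\delta)$ and $(h,\gamma,\delta,\xi)$. One must track the exponents $e_i=i+\sum n_i$ in the recursive boundary on $\B^r(\ZZ M)$, combine them with the shuffle signs $e(\sigma)$ in $\circ_{^{r-1}}$, and then reinterpret the resulting chains through the suspension identifications so that, for instance, the generic cell $[x_1|x_2|x_3]$ of $\B(\ZZ M)$ is matched with its image in $\B^2(\ZZ M)_4$. Once these signs are nailed down on the handful of cells enumerated above, the listed coboundary formulas fall out by direct evaluation, completing the proof.
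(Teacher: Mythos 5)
Your proposal is correct and follows essentially the same route as the paper, which derives the theorem in one line from Proposition \ref{adfu} after the explicit free-module descriptions of $\B^{r}\!(\ZZ M)_n$ and its differentials in degrees $\leq r+3$: you dualize those descriptions via the adjunction, read the coboundaries off as precomposition with the displayed boundary formulas, and obtain the diagram \eqref{dcs} from the suspension identifications. The only difference is that you spell out the sign and shuffle bookkeeping that the paper leaves implicit.
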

The following corollaries follow directly from the form of the cochain complex \eqref{cmra} and the commutativity of the diagram \eqref{dcs}.

\begin{corollary} For any $r\geq 1$, $H^0(M,r;\A)\cong \A(e)$.
\end{corollary}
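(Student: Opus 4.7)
The statement is essentially an immediate harvest from the explicit description of the cochain complex already set up, so the plan is short.

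The plan is to read off $H^0(M,r;\A)$ directly from the truncated complex \eqref{cmra}. The only two ingredients I need are (i) an identification of $C^0(M,r;\A)$ with $\A(e)$, and (ii) the vanishing of the coboundary $\partial^0$. The rest is definitional.

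For step (i), by construction, $\B^{r}\!(\ZZ M)_0$ is the free $\HH M$-module on the one-element set $\{[\ ]\}$ over $M$ with $\pi[\ ]=e$. Hence, by the freeness adjunction of Proposition \ref{adfu}, any morphism of $\HH M$-modules $\B^{r}\!(\ZZ M)_0\to \A$ is uniquely determined by the image of $[\ ]$, which is an arbitrary element of $\A(e)$. This yields the natural isomorphism $C^0(M,r;\A)=\mathrm{Hom}_{\HH M}(\B^{r}\!(\ZZ M)_0,\A)\cong \A(e)$.

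For step (ii), I would split into two cases. When $r\geq 2$, the excerpt establishes that $\B^{r}\!(\ZZ M)_n=0$ for $0<n<r$, so in particular $\B^{r}\!(\ZZ M)_1=0$, whence $C^1(M,r;\A)=0$ and $\partial^0$ is trivially zero; this is also visible as the first arrow of the complex \eqref{cmra}. When $r=1$, a direct check using the boundary formula for $\B(\ZZ M)$ on a generic $1$-cell gives
\[
\partial_x[x_1]=x_{1*}[\ ]+(-1)^1 x_{1*}[\ ]=0,
\]
so again every $0$-cochain is a cocycle.

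Combining, $H^0(M,r;\A)=\ker\partial^0=C^0(M,r;\A)\cong \A(e)$, naturally in $\A$. There is no real obstacle: once the structure of $\B^{r}\!(\ZZ M)$ in low degrees has been unraveled in the preceding theorem, the corollary is immediate. The only mild point to keep in mind is the $r=1$ case, where $C^1(M,1;\A)$ is itself nonzero, so the vanishing of $\partial^0$ must be verified from the explicit coboundary formula rather than from dimensional reasons.
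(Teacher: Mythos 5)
Your proof is correct and follows essentially the same route as the paper, which derives the corollary directly from the form of the cochain complexes \eqref{cm1a} and \eqref{cmra}: $C^0(M,r;\A)\cong\A(e)$ by freeness of $\B^{r}\!(\ZZ M)_0$ on $\{[\ \,]\}$, and $\partial^0=0$ (by the explicit boundary formula when $r=1$, and because $C^1(M,r;\A)=0$ when $r\geq 2$). Your explicit verification of the $r=1$ case is a detail the paper leaves implicit (it simply records $\partial^0=0$ in the statement of the theorem describing $C(M,1;\A)$), but the argument is identical in substance.
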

\begin{corollary} For any $0<n<r$, $H^n(M,r;\A)=0$.
\end{corollary}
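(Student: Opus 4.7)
The plan is to read the statement directly off the shape of the cochain complex displayed in \eqref{cmra}. Recall from the analysis of $\B^{r}\!(\ZZ M)$ carried out just before the theorem that, by induction on $r$, one has $\B^{r}\!(\ZZ M)_n = 0$ whenever $0<n<r$: indeed, the lowest-degree generic cells appearing in $\B^{r}\!(\ZZ M)$ beyond the $0$-cell $[\ \,]$ are the iterated suspensions $[x_1|_{_r}\cdots|_{_r}x_m]$, whose degree is at least $r$, because each suspension raises the minimal nontrivial degree by one, starting from $\B(\ZZ M)_1$ generated by the $1$-cells $[x_1]$.

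Consequently, by Proposition \ref{adfu}, the cochain group
\[
C^n(M,r;\A) \;=\; \mathrm{Hom}_{\HH\! M}\!\big(\B^{r}\!(\ZZ M)_n,\A\big)
\]
vanishes for every $n$ with $0<n<r$, which is exactly the long row of zeros visible in the complex $C(M,r;\A)$ displayed in \eqref{cmra}.

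Therefore, for each $n$ with $0<n<r$, both $C^n(M,r;\A)=0$ and $C^{n-1}(M,r;\A)=0$ (the latter either because $n-1=0<r$ is still in the vanishing range, or because $n-1=0$, in which case the coboundary $\partial^{0}:C^{0}(M,r;\A)\to 0$ is automatically zero). Hence the $n$-cocycles and $n$-coboundaries at such degrees are both trivial, and $H^n(M,r;\A)=0$, as required. There is no substantive obstacle here; the statement is an immediate consequence of the structural description of $\B^{r}\!(\ZZ M)$ obtained in the preceding analysis.
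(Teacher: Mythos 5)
Your proof is correct and follows the paper's own route: the corollary is read off directly from the vanishing $\B^{r}\!(\ZZ M)_n=0$ for $0<n<r$ established in the analysis preceding the theorem, which forces $C^n(M,r;\A)=0$ in that range and hence $H^n(M,r;\A)=0$. (The extra remark about $C^{n-1}$ is harmless but unnecessary, since $H^n$ is already a subquotient of $C^n=0$.)
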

\begin{corollary}\label{hrh1} For any $r\geq 2$, $H^r(M,r;\A)\cong H^1(M,1;\A)$.
\end{corollary}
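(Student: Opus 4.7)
The plan is to read the corollary directly off the diagram \eqref{dcs} together with the vanishing of the lower cochain groups. The key observation needed is that, for $r\geq 2$, the suspension $\s:\B^{r-1}(\ZZ M)\hookrightarrow \B^{r}(\ZZ M)$ is an isomorphism in the relevant degrees, so that the initial non-trivial part of the cochain complex $C(M,r;\A)$ is literally identified with that of $C(M,1;\A)$ shifted up by $r-1$ positions.

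First I would note that, since $\B^{r}(\ZZ M)_n=0$ for $0<n<r$, the cochain complex $C(M,r;\A)$ in \eqref{cmra} vanishes in degrees $1,\ldots,r-1$. Consequently
\[
H^r(M,r;\A)=\ker\bigl(\partial:C^r(M,r;\A)\to C^{r+1}(M,r;\A)\bigr).
\]
Second, I would invoke the identification of the suspension morphisms: for $r\leq n<2r$ the suspension $\s$ identifies $\B^{r-1}(\ZZ M)_{n-1}$ with $\B^{r}(\ZZ M)_n$ as noted in the text just before the large commutative diagram. Applied to $n=r$ and $n=r+1$ (both of which satisfy $n<2r$ precisely because $r\geq 2$), this yields
\[
C^r(M,r;\A)=C^{r-1}(M,r-1;\A)=\cdots=C^1(M,1;\A),
\]
and similarly $C^{r+1}(M,r;\A)=C^2(M,1;\A)$, these being exactly the equalities displayed in the two leftmost columns of \eqref{dcs}.

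Third, because $\s$ is a morphism of complexes, the coboundary $\partial:C^r(M,r;\A)\to C^{r+1}(M,r;\A)$ is carried to the coboundary $\partial:C^1(M,1;\A)\to C^2(M,1;\A)$ under these identifications; this is precisely the commutativity of the left-most square of \eqref{dcs}. Finally, since $\partial^0=0$ in $C(M,1;\A)$, we have
\[
H^1(M,1;\A)=\ker\bigl(\partial:C^1(M,1;\A)\to C^2(M,1;\A)\bigr),
\]
and combining the three displays yields the claimed isomorphism $H^r(M,r;\A)\cong H^1(M,1;\A)$. There is no real obstacle here: the only thing to verify carefully is that the identifications of cochain groups provided by iterated suspension are compatible with the coboundaries, and this has already been packaged into the diagram \eqref{dcs}.
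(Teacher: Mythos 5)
Your proof is correct and follows exactly the route the paper intends: the corollary is read off from the vanishing of $C^n(M,r;\A)$ for $0<n<r$ together with the identifications and commutativity in diagram \eqref{dcs}, so that both sides are the kernel of the same map $C^1(M,1;\A)\to C^2(M,1;\A)$. Nothing is missing.
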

\begin{corollary}\label{c58} For any $r\geq 2$, $H^{r+1}(M,r;\A)\cong H^3(M,2;\A)$, and there is a natural monomorphism $H^3(M,2;\A)\hookrightarrow H^2(M,1;\A)$.
\end{corollary}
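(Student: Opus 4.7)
The plan is to read both statements directly off the commutative diagram \eqref{dcs}. For the isomorphism $H^{r+1}(M,r;\A)\cong H^3(M,2;\A)$, I would note that the $r$-th row of \eqref{dcs} expresses $H^{r+1}(M,r;\A)$ as the quotient
\[
\ker\!\big(\partial\colon C^{r+2}(M,r;\A)\to C^{r+3}(M,r;\A)\big)\big/\mathrm{im}\!\big(\partial\colon C^{r+1}(M,r;\A)\to C^{r+2}(M,r;\A)\big).
\]
The vertical equalities in the columns indexed $r+1$, $r+2$, $r+3$ identify these three cochain groups with $C^3(M,2;\A)$, $C^4(M,2;\A)$, $C^5(M,2;\A)$ respectively, and commutativity of the diagram matches the three coboundaries. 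The sub-quotient therefore coincides literally with the one defining $H^3(M,2;\A)$, yielding the first claim, and the identification is manifestly natural in $\A$.

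For the monomorphism $H^3(M,2;\A)\hookrightarrow H^2(M,1;\A)$, I would exploit the upward surjection $\s^*\colon C^4(M,2;\A)\twoheadrightarrow C^4(M,1;\A)$ sitting in the top-right square of \eqref{dcs}, together with the column equalities $C^1(M,1;\A)=C^2(M,2;\A)$ and $C^2(M,1;\A)=C^3(M,2;\A)$. Commutativity of that square says that, under these identifications, the level-$1$ coboundary $C^2(M,1;\A)\to C^3(M,1;\A)$ factors as $\s^*$ composed with the level-$2$ coboundary $C^3(M,2;\A)\to C^4(M,2;\A)$, while the level-$1$ coboundary $C^1(M,1;\A)\to C^2(M,1;\A)$ is literally equal to the level-$2$ coboundary $C^2(M,2;\A)\to C^3(M,2;\A)$. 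Two consequences follow: the subgroup of coboundaries inside $C^2(M,1;\A)=C^3(M,2;\A)$ is the same at both levels, and every level-$2$ cocycle is automatically a level-$1$ cocycle, because $\s^*$ is a homomorphism. Passing to the quotient by the common image of coboundaries produces a well-defined injection of cohomology groups, natural in $\A$.

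The only substantive ingredient is the commutativity of the diagram \eqref{dcs} itself. I expect this to be the main obstacle in principle, though in practice it has already been absorbed into the explicit coboundary formulas in the preceding theorem, together with the fact that the suspension $\s\colon\B^{r-1}(\ZZ M)\hookrightarrow\B^r(\ZZ M)$ is a chain map compatible with the bar differentials and the shuffle products. Given these, the corollary reduces to a short diagram chase.
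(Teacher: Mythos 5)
Your overall strategy---reading both claims off the commutative diagram \eqref{dcs}---is exactly what the paper intends, and your treatment of the monomorphism $H^3(M,2;\A)\hookrightarrow H^2(M,1;\A)$ is correct: the images of the coboundaries agree because the leftmost square has equalities on both verticals, and level-$2$ cocycles are level-$1$ cocycles because the level-$1$ coboundary factors through the level-$2$ one via $\s^*$. (One slip there: the surjection in the third column is $\s^*\colon C^4(M,2;\A)\twoheadrightarrow C^3(M,1;\A)$, not onto $C^4(M,1;\A)$; this does not affect your argument.)

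The first half, however, contains a genuine off-by-one error. The group $H^{r+1}(M,r;\A)$ is
$$\ker\big(C^{r+1}(M,r;\A)\to C^{r+2}(M,r;\A)\big)\big/\mathrm{im}\big(C^{r}(M,r;\A)\to C^{r+1}(M,r;\A)\big),$$
whereas the subquotient you write down, $\ker(C^{r+2}\to C^{r+3})/\mathrm{im}(C^{r+1}\to C^{r+2})$, computes $H^{r+2}(M,r;\A)$. This matters twice over. First, your argument addresses the wrong cohomology group. Second, the identification you then invoke, $C^{r+3}(M,r;\A)=C^5(M,2;\A)$, is false: the fourth column of \eqref{dcs} provides only a surjection $\s^*\colon C^6(M,3;\A)\twoheadrightarrow C^5(M,2;\A)$, since $C^6(M,3;\A)$ carries the extra component $\xi$. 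Were your identifications available, they would yield $H^{r+2}(M,r;\A)\cong H^4(M,2;\A)$, which is strictly stronger than what the paper establishes (Corollary \ref{c59} gives only a monomorphism $H^5(M,3;\A)\hookrightarrow H^4(M,2;\A)$, precisely because that last vertical map is not an equality). The correct reading uses columns one through three: for $r\geq 3$ the vertical equalities give $C^{r}(M,r;\A)=C^2(M,2;\A)$, $C^{r+1}(M,r;\A)=C^3(M,2;\A)$ and $C^{r+2}(M,r;\A)=C^5(M,3;\A)=C^4(M,2;\A)$, with matching differentials by commutativity, so the subquotient defining $H^{r+1}(M,r;\A)$ coincides literally with the one defining $H^3(M,2;\A)$; the case $r=2$ is a tautology.
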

\begin{corollary}\label{c59} For any $r\geq 3$, $H^{r+2}(M,r;\A)\cong H^5(M,3;\A)$,  and there is a natural monomorphism $H^5(M,3;A)\hookrightarrow H^4(M,2;\A)$.
\end{corollary}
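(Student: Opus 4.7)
My plan is to extract both assertions directly from the commutative diagram \eqref{dcs}, exactly as Corollary \ref{hrh1} and Corollary \ref{c58} are extracted from the same picture. No new machinery is required beyond careful bookkeeping with the left vertical equalities of \eqref{dcs}.

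For the isomorphism $H^{r+2}(M,r;\A)\cong H^{5}(M,3;\A)$, I would note that $H^{r+2}(M,r;\A)$ depends only on the three-term stretch $C^{r+1}(M,r;\A)\to C^{r+2}(M,r;\A)\to C^{r+3}(M,r;\A)$ of the complex \eqref{cmra}. The left vertical equalities in \eqref{dcs} arise from the suspension isomorphism $\s:\B^{r-1}(\ZZ M)_{n-1}\to \B^{r}(\ZZ M)_n$, which by the discussion immediately preceding the theorem is an isomorphism precisely when $r\leq n<2r$. For $r\geq 3$ the four relevant degrees $r, r{+}1, r{+}2, r{+}3$ all lie in this stable range (since $r+3<2r$ for $r\geq 4$, and the case $r=3$ is tautological), so iterating the suspension strictly identifies the stretch above with $C^{4}(M,3;\A)\to C^{5}(M,3;\A)\to C^{6}(M,3;\A)$. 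Taking cohomology yields the claimed isomorphism.

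For the monomorphism $H^{5}(M,3;\A)\hookrightarrow H^{4}(M,2;\A)$, I would use the equalities $C^{5}(M,3;\A)=C^{4}(M,2;\A)$ and $C^{4}(M,3;\A)=C^{3}(M,2;\A)$ together with the explicit coboundary formulas recorded in the theorem. Direct comparison shows that the three components $h,\gamma,\delta$ of $\partial(g,\mu)$ at level $3$ are termwise the negatives of their level-$2$ counterparts, while level $3$ imposes the additional condition $\xi=-\mu(x,y)-\mu(y,x)=0$. Consequently $Z^{5}(M,3;\A)$ is precisely the subgroup of $Z^{4}(M,2;\A)$ consisting of cocycles $(g,\mu)$ whose second component is antisymmetric. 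On the other hand, the coboundary $\partial:C^{3}(M,2;\A)\to C^{4}(M,2;\A)$, which under the above identifications coincides with $\partial:C^{4}(M,3;\A)\to C^{5}(M,3;\A)$, sends $f$ to $(g,\mu)$ with $\mu(x,y)=f(x,y)-f(y,x)$, so $\mu$ is automatically antisymmetric and $B^{4}(M,2;\A)=B^{5}(M,3;\A)$. The inclusion of cocycles therefore descends to a natural monomorphism $H^{5}(M,3;\A)\hookrightarrow H^{4}(M,2;\A)$.

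The one point requiring genuine verification, and the only mild obstacle, is that the vertical equalities of \eqref{dcs} intertwine the coboundary operators, so that $\partial:C^{4}(M,3;\A)\to C^{5}(M,3;\A)$ really equals $\partial:C^{3}(M,2;\A)\to C^{4}(M,2;\A)$ after identification. This follows by inspection of the recursive formula for $\partial=\partial^{\otimes}+\partial^{\circ}$ on generic cells of $\B^{r}(\ZZ M)$ given just before the theorem: in the stable range $n<2r$ the suspension transports both pieces of the differential verbatim, only relabeling the outer separator $|_{_r}$. This is routine and purely combinatorial; once it is in hand, both conclusions of the corollary follow at once.
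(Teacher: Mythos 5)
Your proposal is correct and takes essentially the same route as the paper, which derives this corollary directly from the shape of the truncated complex \eqref{cmra} and the commutativity of diagram \eqref{dcs}: the bottom two rows of \eqref{dcs} agree termwise for $r\geq 3$, giving the isomorphism, and the identifications $C^{4}(M,3;\A)=C^{3}(M,2;\A)$, $C^{5}(M,3;\A)=C^{4}(M,2;\A)$ together with the surjection $C^{6}(M,3;\A)\twoheadrightarrow C^{5}(M,2;\A)$ give $Z^{5}(M,3;\A)\subseteq Z^{4}(M,2;\A)$ and $B^{5}(M,3;\A)=B^{4}(M,2;\A)$, hence the monomorphism. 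Your explicit comparison of the coboundary formulas merely fills in details the paper leaves implicit, and the overall sign discrepancy you note between the level-$2$ and level-$3$ coboundaries is harmless since it changes neither kernels nor images.
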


Let us now recall that the so-called Grillet cohomology groups  $H^n_{^\mathrm{G}}(M,\A)$, for $1\leq n\leq 3$,  can be computed as the cohomology groups of the truncated
cochain complex $C_{^{_\mathrm{G}}}(M,\A)$,
%\begin{equation}\label{sccomplex}
$$ 0\to
C^1_{^\mathrm{G}}(M,\A)\overset{\delta^1}\longrightarrow
C^2_{^\mathrm{G}}(M,\A)\overset{\delta^2} \longrightarrow
 C^3_{^\mathrm{G}}(M,\A)\overset{\delta^3}\longrightarrow C^4_{^\mathrm{G}}(M,\A),
 $$
%%\end{equation}
called the complex of (normalized on $e\in M$) {\em symmetric cochains}
on $M$ with values in $\A$  \cite[Chapters XII, XIII,
XIV]{grillet}, where

\vspace{0.2cm}$\bullet$ $C^1_{^\mathrm{G}}(M,\A)$ consists of normalized functions $f:M\to \bigcup_{x\in M}\A(x)$, with  $f(x)\in \A(x)$.

\vspace{0.2cm} $\bullet$ $C^2_{^\mathrm{G}}(M,\A)$ consists of normalized functions $f:M^2\to \bigcup_{x\in M}\A(x)$,
with $f(x,y)\in \A(xy)$, such that
$f(x,y)=f(y,x)$.

\vspace{0.2cm} $\bullet$ $C^3_{^\mathrm{G}}(M,\A)$ consists of normalized functions
$f:M^3\to \bigcup_{x\in M}\A(x)$ with $f(x,y,z)\in \A(xyz)$, such
that
$$f(x,y,z)+f(z,y,x)=0,\hspace{0.3cm}
f(x,y,z)+f(y,z,x)+f(z,x,y)=0.$$

\vspace{0.2cm} $\bullet$ $C^4_{^\mathrm{G}}(M,\A)$ consists of normalized functions $f:M^4\to \bigcup_{x\in M}\A(x)$ with $f(x,y,z,t)\in
\A(xyzt)$, such that
$$\begin{array}{l} f(x,y,y,x)=0,\hspace{0.3cm}f(t,z,y,x)+f(x,y,z,t)=0,\\
f(x,y,z,t)-f(y,z,t,x)+f(z,t,x,y)-f(t,x,y,z)=0,\\
f(x,y,z,t)-f(y,x,z,t)+f(y,z,x,t)-f(y,z,t,x)=0.
\end{array}$$

\vspace{0.2cm} $\bullet$ the
coboundary homomorphisms are defined by
$$
\left\{\begin{array}{l}
(\delta^1 f)(x,y)= -x_*f(y)+f(xy)-y_*f(x),\\[4pt]
(\delta^2 f)(x,y,z)= -x_*f(y,z)+f(xy,z)-f(x,yz)+z_*f(x,y), \\[4pt]
 (\delta^3 f)(x,y,z,t)= -x_*f(y,z,t)+f(xy,z,t)-f(x,yz,t)+f(x,y,zt)-
t_*f(x,y,z).
\end{array}
\right.
$$

There is natural injective cochain map \begin{equation}\label{ingr}\begin{array}{l}
\xymatrix{0\ar[r]&
C^1_{^\mathrm{G}}(M,\A)\ar[r]^{\delta^1}\ar@{=}[d]_{i_1=id}&
C^2_{^\mathrm{G}}(M,\A)\ar[r]^{\delta^2} \ar@{_{(}->}[d]_{i_2}&
C^3_{^\mathrm{G}}(M,\A)\ar[r]^{\delta^3}
\ar@{_{(}->}[d]_{i_3}& C^4_{^\mathrm{G}}(M,\A)\ar@{_{(}->}[d]_{i_4}\\
0\ar[r]&  C^3(M,3;\A)\ar[r]^{\partial^3}&
C^4(M,3;\A)\ar[r]^{\partial^4}&C^5(M,3;\A)\ar[r]^{\partial^5}&
C^6(M,3;\A), }
\end{array}
\end{equation}
which is the identity map, $i_1(f)=f$, on symmetric 1-cochains, the
map $i_2(f)=-f$ on symmetric 2-cochains, and on symmetric 3- and 4-cochains is defined
by the simple formulas $i_3(f)=(f,0)$ and $i_4(f)=(-f,0,0,0)$,
respectively. The only non-trivial verification here concerns the
equality $\partial^5i_3=i_4\delta^3$, that is,
$\partial^5(f,0)=(-\delta^3f,0,0,0)$, for any $f\in C^3_{^\mathrm{G}}(M,\A)$,
but it easily follows from Lemma \ref{eleob} below.

\begin{lemma}\label{eleob} Let  $f:M^3\to \bigcup_{x\in M}\A(x)$ be a function with
$f(x,y,z)\in \A(xyz)$. Then $f$ satisfies the symmetry conditions
\begin{equation}\label{easc1}f(x,y,z)+f(z,y,x)=0,\hspace{0.3cm}
f(x,y,z)+f(y,z,x)+f(z,x,y)=0,\end{equation}
if and only if it satisfies either \eqref{easc2} or \eqref{easc3}
below.
\begin{equation}\label{easc2}
f(x,y,z)-f(y,x,z)+f(y,z,x)=0
\end{equation}
\begin{equation}\label{easc3}
f(x,y,z)-f(x,z,y)+f(z,x,y)=0
\end{equation}
\end{lemma}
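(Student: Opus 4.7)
My plan is to verify the two-way equivalence $(\ref{easc1})\Leftrightarrow (\ref{easc2})$ and $(\ref{easc1})\Leftrightarrow (\ref{easc3})$ by purely formal manipulation on the six permutations of the three letters $x,y,z$. Since no $\HH M$-action is involved, the statement is really an identity among the twelve formal symbols $f(\sigma x,\sigma y,\sigma z)$ as $\sigma$ runs over $S_3$; I will denote the two antisymmetry/cyclicity hypotheses of $(\ref{easc1})$ by (S1) and (S2) in what follows.

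For the forward direction $(\ref{easc1})\Rightarrow(\ref{easc2})$, I would rewrite $f(y,x,z)$ using (S1) in the form $f(y,x,z)=-f(z,x,y)$, so that
\[
f(x,y,z)-f(y,x,z)+f(y,z,x)=f(x,y,z)+f(z,x,y)+f(y,z,x),
\]
which is precisely $(\ref{easc1})$'s cyclicity (S2). The forward direction $(\ref{easc1})\Rightarrow(\ref{easc3})$ is entirely analogous, using (S1) to replace $f(x,z,y)$ by $-f(y,z,x)$.

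For the reverse direction $(\ref{easc2})\Rightarrow(\ref{easc1})$, the idea is to feed $(\ref{easc2})$ into itself under the transposition $(x,y)\leftrightarrow(y,x)$: substituting $(x,y,z)\mapsto(y,x,z)$ in $(\ref{easc2})$ gives $f(y,x,z)-f(x,y,z)+f(x,z,y)=0$, so $f(y,x,z)=f(x,y,z)-f(x,z,y)$. Plugging this back into $(\ref{easc2})$ yields
\[
f(x,z,y)+f(y,z,x)=0,
\]
which after the relabeling $y\leftrightarrow z$ is exactly the antisymmetry (S1). Having (S1) in hand, the original $(\ref{easc2})$ rewrites as $f(x,y,z)+f(z,x,y)+f(y,z,x)=0$, which is (S2). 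The implication $(\ref{easc3})\Rightarrow(\ref{easc1})$ is strictly dual: substitute $(x,y,z)\mapsto(x,z,y)$ in $(\ref{easc3})$, eliminate $f(x,z,y)$, obtain (S1), then recover (S2) from $(\ref{easc3})$ itself.

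There is no real obstacle — everything is a matter of choosing the right cyclic/transposition substitution so that three of the six permutation values cancel in pairs. The only bookkeeping care needed is to verify consistently that the two conclusions $(\ref{easc2})$ and $(\ref{easc3})$ really are equivalent under $(\ref{easc1})$ (they are related by the reversal permutation $(x,y,z)\mapsto(z,y,x)$ composed with sign flipping via (S1)), and to check that the argument does not implicitly use commutativity of $M$ or any $\HH M$-module structure — it does not, as the identities are purely additive in $\A(xyz)$ for each fixed unordered triple $\{x,y,z\}$.
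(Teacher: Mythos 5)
Your proposal is correct and follows essentially the same route as the paper: the forward implications are immediate from the antisymmetry condition, and the reverse implication is obtained by substituting a permuted instance of \eqref{easc2} (resp.\ \eqref{easc3}) back into itself to first extract the antisymmetry condition and then the cyclic one. The only cosmetic difference is the choice of transposition used in the self-substitution (you use $x\leftrightarrow y$ where the paper uses $x\leftrightarrow z$), which changes nothing of substance.
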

\begin{proof} The implications $\eqref{easc1}\Rightarrow \eqref{easc2}$ and
$\eqref{easc1}\Rightarrow \eqref{easc3}$ are easily seen. To see
that $\eqref{easc2}\Rightarrow \eqref{easc1}$, observe that, making
the permutation $(x,y,z)\mapsto (z,y,x)$, equation \eqref{easc2} is
written as  $f(y,z,x)=f(z,y,x)+f(y,x,z)$. If we carry this to
\eqref{easc2}, we obtain
$$
f(x,y,z)-f(y,x,z)+f(z,y,x)+f(y,x,z)=f(x,y,z)+f(z,y,x)=0,
$$
that is, the first condition in \eqref{easc1} holds. But then, we
get also the second one simply by replacing the term $f(y,x,z)$ with
$-f(z,x,y)$ in \eqref{easc2}. The proof that
$\eqref{easc3}\Rightarrow \eqref{easc1}$ is parallel.
\end{proof}

\begin{proposition}\label{comgc} For any $\HH M$-module $\A$, the injective cochain map \eqref{ingr} induces natural isomorphisms
$$\xymatrix@C=10pt{
 H^1_{^\mathrm{G}}(M,\A)\cong H^1(M,1;\A),
&
 H^2_{^\mathrm{G}}(M,\A)\cong H^3(M,2;\A),
}$$
and a natural monomorphism
$$
H^3_{^\mathrm{G}}(M,\A)\hookrightarrow H^5(M,3;\A)
.$$
\end{proposition}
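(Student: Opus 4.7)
The strategy is to exploit the injective cochain map \eqref{ingr} together with the identifications $H^3(M,3;\A)\cong H^1(M,1;\A)$ and $H^4(M,3;\A)\cong H^3(M,2;\A)$ from Corollaries \ref{hrh1} and \ref{c58}. Each component $i_n$ induces a homomorphism $H^n_{^{_\mathrm{G}}}(M,\A)\to H^{n+2}(M,3;\A)$, and I then have to check this is an isomorphism for $n=1,2$ and merely injective for $n=3$. In each case the verification reduces to comparing cocycle and coboundary groups using the explicit formulas for the differentials of $C(M,r;\A)$ given in the preceding theorem.

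For $H^1_{^{_\mathrm{G}}}(M,\A)\cong H^1(M,1;\A)$, I would simply note that $C^1_{^{_\mathrm{G}}}(M,\A) = C^1(M,1;\A)$, that Grillet's $\delta^1$ has the same formula as the Leech/first-level coboundary $\partial^1$, and that both complexes have trivial $0$-coboundary. Hence $i_1 = \mathrm{id}$ identifies the cocycle groups (and trivially the coboundary groups), yielding the isomorphism.

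For $H^2_{^{_\mathrm{G}}}(M,\A)\cong H^3(M,2;\A)$, the key observation is that the differential $\partial : C^3(M,2;\A)\to C^4(M,2;\A)$ sends a cochain $f$ to the pair $\left(\delta^2 f,\; f(x,y)-f(y,x)\right)$. The vanishing of its second coordinate is precisely the Grillet symmetry condition defining $C^2_{^{_\mathrm{G}}}$, and the vanishing of the first is the Grillet 2-cocycle condition; therefore $Z^3(M,2;\A) = Z^2_{^{_\mathrm{G}}}(M,\A)$ as subgroups of $C^2(M,1;\A)$. Moreover $B^3(M,2;\A) = \delta^1\bigl(C^1_{^{_\mathrm{G}}}(M,\A)\bigr) = B^2_{^{_\mathrm{G}}}(M,\A)$, since $C^2(M,2;\A) = C^1(M,1;\A) = C^1_{^{_\mathrm{G}}}(M,\A)$ and the two differentials coincide. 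The map $i_2(f) = -f$ then induces the asserted isomorphism on cohomology.

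For the injection $H^3_{^{_\mathrm{G}}}(M,\A)\hookrightarrow H^5(M,3;\A)$, I would proceed as follows. Granting that $i_3$ is a cochain map (which the excerpt reduces to Lemma \ref{eleob}), suppose $f\in Z^3_{^{_\mathrm{G}}}(M,\A)$ and $i_3(f) = (f,0)$ is a coboundary in $C^5(M,3;\A) = C^4(M,2;\A)$; write $(f,0)=\partial\phi$ for some $\phi\in C^4(M,3;\A) = C^3(M,2;\A) = C^2(M,1;\A)$. Using the formula $\partial\phi = \bigl(\delta^2\phi,\;\phi(x,y)-\phi(y,x)\bigr)$, the equation $(f,0)=\partial\phi$ forces $\phi$ to be symmetric, hence $\phi\in C^2_{^{_\mathrm{G}}}(M,\A)$, and then $f=\delta^2\phi\in B^3_{^{_\mathrm{G}}}(M,\A)$, giving $[f]=0$ in $H^3_{^{_\mathrm{G}}}$. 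The only substantive point is that the $\mu$-component of the 2nd-level differential is exactly the skew-symmetrizer on 2-cochains, so any bounding cochain for an element in the image of $i_3$ is automatically symmetric; I do not expect a genuine obstacle beyond this book-keeping.
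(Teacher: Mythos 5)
Your proposal is correct and follows essentially the same route as the paper: both arguments compare kernels and images term by term using the explicit coboundary formulas, observe that the $\mu$-component of the second-level differential is the skew-symmetrizer (so that vanishing of $\partial$ on a $2$-cochain forces Grillet symmetry), and prove injectivity in degree $3$ by noting that any bounding cochain for $i_3(f)=(f,0)$ must be symmetric, whence $f$ is already a Grillet coboundary. The only cosmetic difference is that the paper phrases the first two comparisons inside the third-level complex ($\ker\partial^3$, $\ker\partial^4$, $\operatorname{Im}\partial^3$) and then invokes $H^3(M,3;\A)\cong H^1(M,1;\A)$ and $H^4(M,3;\A)\cong H^3(M,2;\A)$, whereas you work one level down via the identifications of diagram \eqref{dcs}; these are the same computation.
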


\begin{proof}
From diagram \eqref{ingr}, it follows directly that $\ker{\delta^1}=\ker{\partial^3}$ and $i_2\mathrm{Im}\,{\delta^1}=\mathrm{Im}\,{\partial^3}$. Further, $i_2\ker{\delta^2}=\ker{\partial^4}$, since the condition $\partial^4f=0$ on a cochain $f\in C^4(M,3;\A)=C^2(M,1;\A)$ implies the symmetry condition $f(x,y)=f(y,x)$. Then,
$$
H^1_{^\mathrm{G}}(M,\A)=\ker{\delta^1}=\ker{\partial^3}\cong H^3(M,3;\A)\cong H^1(M,1;\A),
$$
and
$$
H^2_{^\mathrm{G}}(M,\A)=\frac{\textstyle \ker{\delta^2}}{\textstyle \mathrm{Im}\,{\delta^1}}\cong
\frac{\textstyle i_2\ker{\delta^2}}{\textstyle i_2\mathrm{Im}\,{\delta^1}}=
\frac{\textstyle \ker{\partial^4}}{\textstyle \mathrm{Im}\,{\partial^3}}\cong H^4(M,3;\A)\cong H^3(M,2;\A).
$$

To prove that the induced homomorphism  $H^3_{^\mathrm{G}}(M,\A)\to H^5(M,3;\A)$ is injective, suppose $f\in
C^3_{^\mathrm{G}}(M,\A)$ is a symmetric 3-cochain such that $i_3f=\partial^4g$ for some $g\in C^4(M,3;\A)=C^2(M,1;\A)$. This means that
the equalities
$$ f(x,y,z)= x_*g(y,z)-g(xy,z)+g(x,yz)-z_*g(x,y),\hspace{0.3cm}0=g(x,y)-g(y,x),$$
hold. Then, $g\in  C^2_{^\mathrm{G}}(M,\A)$ is a symmetric 2-cochain, and
$f=-\delta^2g$ is actually a symmetric
2-coboundary. It follows that the injective map $i_3:
\ker{\delta^3}\hookrightarrow  \ker{\partial^5}$ induces a injective
map in cohomology $H^3C_{^{_\mathrm{G}}}(M,\A)\big)\hookrightarrow H^5
C(M,3;\A)$, as required.
\end{proof}

To complete the list of relationships between the cohomology groups $H^n(M,r;\A)$ with those already known in the literature, let us note that a direct comparison of the cochain complex \eqref{cmra} with the cochain complex in \cite[(6)]{c-c-1}, which computes the lower {\em commutative cohomology groups} $H^n_{^{_\mathrm{ C}}}(M,\A)$, gives the following.

\begin{proposition}\label{p512}
 For any $\HH M$-module $\A$, there are  natural isomorphisms
$$\xymatrix@C=10pt{H^{1}(M,1;\A)\cong  H^1_{^{_\mathrm{C}}}(M,\A),& H^{3}(M,2;\A)\cong  H^2_{^{_\mathrm{C}}}(M,\A), & H^{4}(M,2;\A)\cong  H^3_{^{_\mathrm{C}}}(M,\A).}$$
\end{proposition}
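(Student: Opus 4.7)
The proof should be a direct, low-tech verification that the truncated cochain complex $C(M,r;\A)$ displayed in diagram \eqref{dcs} agrees, degree by degree in the relevant range, with the cochain complex from \cite[(6)]{c-c-1} that defines the lower commutative cohomology groups $H^n_{^{_\mathrm{C}}}(M,\A)$ for $n=1,2,3$. So the plan is to unwind the definitions on both sides and exhibit explicit cochain maps that are isomorphisms in the stated degrees.

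First I would handle the easiest case: $H^1(M,1;\A)\cong H^1_{^{_\mathrm{C}}}(M,\A)$. By construction $C^1(M,1;\A)$ is the group of normalized functions $f:M\to\bigcup_{x\in M}\A(x)$ with $f(x)\in \A(x)$ and coboundary $(\partial f)(x,y)=x_*f(y)-f(xy)+y_*f(x)$; this is literally the cochain group and coboundary used in \cite[(6)]{c-c-1} in degree 1 (there is no symmetry condition yet), so the two groups of 1-cocycles are equal and there are no coboundaries to quotient by. In particular the isomorphism can already be read off from Proposition \ref{h1l}.

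Next, for $H^3(M,2;\A)\cong H^2_{^{_\mathrm{C}}}(M,\A)$, I would use the equality $C^3(M,2;\A)=C^2(M,1;\A)$ and the explicit formula for $\partial: C^3(M,2;\A)\to C^4(M,2;\A)$ given in the theorem: a 3-cochain $f$ of $2$nd level is a cocycle iff both $-x_*f(y,z)+f(xy,z)-f(x,yz)+z_*f(x,y)=0$ and $f(x,y)=f(y,x)$ hold. These are precisely the conditions defining a normalized symmetric commutative $2$-cocycle in \cite[(6)]{c-c-1}, and the coboundary image coming from $C^2(M,2;\A)=C^1(M,1;\A)=0$ (after the shift) recovers the commutative 2-coboundaries in the same way as the already-established isomorphism $H^3(M,2;\A)\cong H^2_{^{_\mathrm{G}}}(M,\A)$. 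So this reduces to observing that Grillet's symmetric 2-cochain complex in that degree and the commutative cochain complex of \cite{c-c-1} coincide.

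The real content is the third isomorphism $H^4(M,2;\A)\cong H^3_{^{_\mathrm{C}}}(M,\A)$, and this will be the main obstacle, because here the pair $(g,\mu)\in C^4(M,2;\A)$ must be compared with the single 3-cochain used in \cite{c-c-1}, up to matching the sign and shuffle conventions produced by the bar construction against the ones used there. I would write out $\partial(g,\mu)=(h,\gamma,\delta)$ as given in the theorem, read the cocycle equations
\begin{align*}
h(x,y,z,t)&=x_*g(y,z,t)-g(xy,z,t)+g(x,yz,t)-g(x,y,zt)+t_*g(x,y,z)=0,\\
\gamma(x,y,z)&=y_*\mu(x,z)-\mu(x,yz)+z_*\mu(x,y)+g(x,y,z)-g(y,x,z)+g(y,z,x)=0,\\
\delta(x,y,z)&=x_*\mu(y,z)-\mu(xy,z)+y_*\mu(x,z)-g(x,y,z)+g(x,z,y)-g(z,x,y)=0,
\end{align*}
and check that they are exactly the cocycle identities defining a normalized commutative 3-cocycle in \cite[(6)]{c-c-1}, with $g$ playing the role of the associator part and $\mu$ of the braiding part. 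The same direct comparison at the level of coboundaries (the image of $\partial: C^3(M,2;\A)\to C^4(M,2;\A)$ versus the group of commutative 3-coboundaries) closes the argument. Once the dictionary $g\leftrightarrow$ associator, $\mu\leftrightarrow$ braiding is fixed, the verification is mechanical; the only care needed is to track the signs produced by the shuffle product formula \eqref{forshufpro} and confirm they match the ones in \cite{c-c-1}.
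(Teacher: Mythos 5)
Your proposal is correct and follows essentially the same route as the paper, which gives no argument beyond asserting that a direct comparison of the truncated complex \eqref{cmra} with the cochain complex in \cite[(6)]{c-c-1} yields the isomorphisms; you simply carry out that comparison degree by degree. One small slip: $C^2(M,2;\A)=C^1(M,1;\A)$ is not zero (it is the group of normalized $1$-cochains), so the $2$nd-level $3$-coboundaries form a genuinely nontrivial subgroup that must be matched against the commutative $2$-coboundaries, as you in fact go on to do.
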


\section{Cohomology classification of symmetric monoidal abelian groupoids}\label{sect6}
This section  is dedicated to showing a precise
classification for symmetric monoidal abelian groupoids, by means of the 3rd level cohomology groups of commutative monoids $H^5(M,3;\A)$.

Symmetric monoidal categories have been studied extensively in the literature
and we refer to Mac Lane \cite{Mac2} and Saavedra \cite{Saa} for the background.  Recall that a {\em groupoid} is a small category all whose morphisms are
invertible. A groupoid $\M$ is said to be {\em abelian} if its isotropy (or vertex) groups
$\mathrm{Aut}_\M(x)$, $x\in \mathrm{Ob}\M$, are all abelian. We will use additive notation for abelian groupoids. Thus, the identity morphism of an object $x$ of an abelian groupoid
$\M$ will be denoted by $0_x$; if $a:x\to y$, $b:y\to z$ are
morphisms,  their composite is written by $b+a:x\to z$, while the
inverse of $a$ is $-a:y\to x$.

\vspace{0.1cm}A {\em symmetric monoidal abelian groupoid}
$$\M=(\M, \otimes, \I,
\boldsymbol{a},\boldsymbol{l},\boldsymbol{r},\boldsymbol{c})$$
consists of an abelian groupoid $\M$, a functor $ \otimes:
\M\times\M\to\M$ (the {\em tensor product}), an object $\I$ (the
{\em unit object}), and  natural isomorphisms
$\boldsymbol{a}_{x,y,z}:(x\otimes y)\otimes z \to x\otimes(y\otimes
z)$, $\boldsymbol{l}_{x}:\I \otimes x \to x$, $\boldsymbol{r}_{x}:x \otimes \I \to x$ (called the {\em associativity} and {\em unit constraints}, respectively) and $\boldsymbol{c}_{x,y}: x\otimes
y \to y\otimes x$ (the {\em symmetry}), such that the four
coherence conditions below hold.
\begin{align}\label{eq5.1}
&\boldsymbol{a}_{{x,y,z\otimes t}}+\boldsymbol{a}_{{x\otimes y,z,t}}=(0_x\!\otimes\!
\boldsymbol{a}_{{y,z,t}})+ \boldsymbol{a}_{{x,y\otimes z,t}}+
(\boldsymbol{a}_{{x,y,z}}\!\otimes\! 0_t),\\
&\label{eq5.2} (0_x\!\otimes\! \boldsymbol{l}_{y})+\boldsymbol{a}_{{x,I,y}}= \boldsymbol{r}_{x}
\!\otimes \!0_y,\\
&\label{eq5.3} (0_y\!\otimes\!\boldsymbol{c}_{x,z})+\boldsymbol{a}_{y,x,z}+
(\boldsymbol{c}_{x,y}\!\otimes\! 0_z)=\boldsymbol{a}_{y,z,x}+\boldsymbol{c}_{x,y\otimes
z}+\boldsymbol{a}_{x,y,z},\\
&\label{eq5.4}  \boldsymbol{c}_{y,x}+\boldsymbol{c}_{x,y}=0_{x\otimes y}.
\end{align}

For further use, we recall that, in any symmetric monoidal abelian
groupoid $\M$, the equalities below hold (see
\cite[Propositions 1.1 and 2.1]{j-s}).
\begin{align}\label{eqad1}
&\boldsymbol{l}_{x\otimes y}+\boldsymbol{a}_{\I,x,y}=\boldsymbol{l}_{x}\otimes 0_y,\hspace{0.3cm}
0_x\otimes \boldsymbol{r}_{y}+\boldsymbol{a}_{x,y,\I}=\boldsymbol{r}_{x\otimes y},
\\
\label{eqad2}
&\boldsymbol{l}_{x}+\boldsymbol{c}_{x,\I}=\boldsymbol{r}_{x},\hspace{0.3cm}
\boldsymbol{r}_{x}+\boldsymbol{c}_{\I,x}=\boldsymbol{l}_{x}.
\end{align}

\begin{example}[{\em $2$-dimensional crossed products}] \label{2dcp}{\em
Every 3rd level 5-cocycle  $(g,\mu)\in
Z^5(M,3;\A)$, gives rise to a symmetric monoidal abelian groupoid
\begin{equation*} \A\rtimes_{g,\mu}\!M=(\A\rtimes_{g,\mu}\!M,\otimes,\mathrm{I},\boldsymbol{a},\boldsymbol{l},\boldsymbol{r}, \boldsymbol{c}),
\end{equation*}
that should  be thought of as a {\em $2$-dimensional crossed product
of $M$ by $\A$}, and it is built as follows:  Its underlying
groupoid has as set of objects the set $M$; if $x\neq y$ are
different elements of the monoid $M$, then there are no morphisms in
$\A\rtimes_{g,\mu}M$ between them, whereas its isotropy group at any
$x\in M$ is $\A(x)$.

The tensor product $\otimes:(\A\rtimes_{g,\mu}\!M)\times
(\A\rtimes_{g,\mu}\!M)\to \A\rtimes_{g,\mu}\!M$ is given on objects  by
multiplication in $M$, so $x\otimes y=xy$, and on
morphisms by the group homomorphisms
%\begin{equation}\label{ot}
$$
\otimes:\A(x)\times \A(y)\to \A(xy), \hspace{0.4cm} a_x\otimes a_y=y_* a_x+ y_*a_x.
$$
%\end{equation}
The unit object is $\I=e$, the unit of the monoid $M$, and the
structure constraints are
\begin{align}\nonumber
\boldsymbol{a}_{x,y,z}=\ &g(x,y,z):(xy)z\to x(yz),\\ \nonumber
\boldsymbol{c}_{x,y}=\ &\mu(x,y):xy\to yx, \\ \nonumber
\boldsymbol{l}_{x}= \ & 0_x:ex=x\to x \\ \nonumber
\boldsymbol{r}_{x}=\ &0_x:xe=x\to x,
\end{align}
which are easily seen to be natural since $\A$ is an abelian group
valued functor.  The coherence conditions $(\ref{eq5.1})$,
$(\ref{eq5.3})$,  and $(\ref{eq5.4})$ follow from the 5-cocycle
condition $\partial^5(h,\mu)=(0,0,0,0)$, while the coherence condition
$(\ref{eq5.2})$ holds due to the normalization conditions
$h(x,e,y)=0$.}
\end{example}

\vspace{0.2cm}If $\M$, $\M'$ are symmetric monoidal abelian groupoids, then a
{\em symmetric monoidal functor} $
F=(F,\varphi,\varphi_0):\M\to \M'$ consists of a functor between the
underlying groupoids $F:\M\to \M'$, natural isomorphisms $
\varphi_{x,y}:Fx\otimes' Fy\to F(x\otimes y)$, and an isomorphism
$\varphi_0:\I'\to F\I $, such that the following coherence
conditions hold:
\begin{align}\label{eq5.5}
&\varphi_{x,y\otimes z}+(0_{Fx}\!\otimes'\! \varphi_{y,z})+\boldsymbol{a}'_{Fx,Fy,Fz}=
F(\boldsymbol{a}_{x,y,z})+\varphi_{x\otimes
y,z}+(\varphi_{x,y}\!\otimes'\! 0_{Fz}),
\\
\label{eq5.6}& F(\boldsymbol{l}_x)+\varphi_{\I,x}+( \varphi_0 \!\otimes'\! 0_{Fx})=
\boldsymbol{l}'_{Fx}, \
F(\boldsymbol{r}_x)+\varphi_{x,\I}+(0_{Fx}\!\otimes'\! \varphi_0)=
\boldsymbol{r}'_{Fx}\,,
\\
\label{eq5.7}& \varphi_{y,x} +\boldsymbol{c}'_{Fx,Fy}=F(\boldsymbol{c}_{x,y})+\varphi_{x,y}.
\end{align}
%The symmetric monoidal functor is called {\em strict} when each of the
%isomorphisms $\varphi_{x,y}$ and $\varphi_0$ is an identity.

Suppose $F':\M\to \M'$ is another symmetric monoidal functor. Then, a {\em
symmetric monoidal isomorphism} $\theta:F\Rightarrow F'$ is a natural
isomorphism between the underlying functors, $\theta:F\Rightarrow F'$,
such that the following coherence conditions hold:
\begin{equation*}
\varphi'_{x,y}+(\theta_x\otimes'\theta_y)=\theta_{x\otimes y}+\varphi_{x,y}, \ \
\theta_\I+\varphi_0=\varphi'_0.
\end{equation*}

\begin{comment}
\begin{example} {\em Let $(g,\mu),(g',\mu')\in Z^5(M,3:\A)$
be 3rd level $5$-cocycles of a commutative monoid. Then, any
$4$-cochain $f\in C^4(M,3;\A)=C^2(M,1;\A)$ such that
$(h,\mu)=(h',\mu')+\partial^4g$ induces a symmetric monoidal isomorphism
\begin{equation}\label{isoco}
F(f)=(id,f,0_1):\A\rtimes_{g,\mu}\!M\cong \A\rtimes_{g',\mu'}\!M
\end{equation}
which is the identity functor on the underlying groupoids. Its structure isomorphisms are given by $\varphi_{x,y}=f(x,y):xy \to xy$
and  $\varphi_0=0_e:e\to e$, respectively. Since the groups
$\A(xy)$ are abelian, these isomorphisms $\varphi_{x,y}$ are
natural.  The equality $(g',\mu')=(g,\mu)+\partial^4f$ implies coherence conditions
\eqref{eq5.5} and \eqref{eq5.7}, while
the conditions in \eqref{eq5.6} trivially hold because of the
normalization conditions  $f(x,e)=0_{x}$.

Moreover, any $f^1\in C^3(M,3;\A)=C^1(M,1;\A)$ 3rd level $3$-cochain verifying
$f'=f+\partial^3f^1\in C^2(M,1;\A)$, induced a symmetric monoidal isomorphism
$ \theta(f^1): F(f)\Rightarrow F(f') $. Indeed, $\theta(f^1)$ is defined
by $\theta(f^1)_x=f^1(x):x\to x$, for each $x\in M$. So defined,
$\theta$ is natural because the groups
$\A(x)$ are abelian; the first  condition in \eqref{eq5.8} holds owing to the
equality $f'=f+\partial^1f^1$, and the second one thanks to the
normalization condition $f^1(e)=0_e$ of $f^1$.}
\end{example}
\end{comment}

With compositions defined in a natural way, symmetric monoidal abelian
groupoids, symmetric monoidal functors, and symmetric monoidal isomorphisms form a
2-category \cite[Chaper V, \S 1]{G-Z}.
 A symmetric monoidal functor  $F:\M\to \M'$ is called a {\em symmetric monoidal
equivalence} if it is an equivalence in this 2-category.

\begin{comment}
\begin{remark}\label{remar}  From the Coherence Theorem
for monoidal categories \cite{Mac2,Mac3}, it follows that
 every symmetric monoidal abelian groupoid is symmetric monoidal
equivalent to a symmetric monoidal  strict one, that is, to one in
which all the structure constraints $\boldsymbol{a}_{x,y,z}$,
$\boldsymbol{l}_x$, and $\boldsymbol{r}_x$ are identities.
\end{remark}
\end{comment}

 Our goal is to show a classification for symmetric monoidal abelian groupoids,
 where two symmetric monoidal abelian groupoids connected by a symmetric monoidal
 equivalence are considered the same, as stated in the theorem below. Recall that any homomorphism of monoids $i:M\to M'$ induces a functor $ \HH M\to \HH M'$ in a obvious way, and then, by composition with it, a functor $i^*: \HH M'\text{-Mod}\to \HH M\text{-Mod}$.

\begin{theorem}[Classification of Symmetric Monoidal Abelian Groupoids]\label{mainthcl}
$(i)$ For any symmetric monoidal abelian groupoid  $\M$, there exist
a commutative monoid $M$, a $\HH M$-module $\A$, a
$3$rd level $5$-cocycle $(g,\mu)\in Z^5(M,3;\A)$, and a symmetric monoidal
equivalence
$$
\A\rtimes_{g,\mu}\!M \simeq \M.
$$

$(ii)$ For any two $3$rd level $5$-cocycles $(g,\mu)\in Z^5(M,3;\A)$ and $(g',\mu')\in Z^5(M',3;\A')$, there is a symmetric monoidal equivalence
$$
\A\rtimes_{g,\mu}\!M \simeq \A'\rtimes_{g'\!,\mu'}\!M'
$$
 if and and
only if there exist an isomorphism of monoids $i:M\cong M'$ and a
natural isomorphism  $\psi:\A\cong i^*\A'$,  such that the equality of
cohomology classes below holds.
$$
[g,\mu]=\psi_*^{-1}i^*[g',\mu']\in  H^5(M,3;\A)
$$
\end{theorem}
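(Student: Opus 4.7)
The plan is to extract from any symmetric monoidal abelian groupoid a skeletal model of the form $\A\rtimes_{g,\mu}\!M$ as in Example \ref{2dcp}, and then to translate a symmetric monoidal equivalence between two such models into an equality of cohomology classes in $H^5(M,3;\A)$.

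For part $(i)$, I would let $M=\pi_0(\M)$, a commutative monoid under the tensor product with unit the class of $\I$, and choose a representative object $x$ in each class, with $e:=\I$ chosen for the unit class. For each pair $(x,y)\in M^2$ I would pick an isomorphism $\xi_{x,y}:x\otimes y\to xy$ in $\M$, normalized so that $\xi_{e,y}=\boldsymbol{l}_y$ and $\xi_{x,e}=\boldsymbol{r}_x$. The $\HH M$-module $\A$ is then defined by $\A(x)=\mathrm{Aut}_\M(x)$ with transport maps $y_*(a_x)=\xi_{x,y}+(a_x\otimes 0_y)-\xi_{x,y}$, functorial by the pentagon and unit axioms. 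Transporting the constraints $\boldsymbol{a}$ and $\boldsymbol{c}$ across the $\xi$'s yields automorphisms $g(x,y,z)\in\A(xyz)$ and $\mu(x,y)\in\A(xy)$, and I would verify that coherence axioms \eqref{eq5.1}, \eqref{eq5.3}, \eqref{eq5.4} translate, term by term, into the $5$-cocycle condition $\partial^5(g,\mu)=0$ in the complex $C(M,3;\A)$; normalization of $(g,\mu)$ will follow from the normalization of $\xi$ together with \eqref{eq5.2} and \eqref{eqad1}--\eqref{eqad2}. The tautological assignment $x\mapsto x$ on representatives, equipped with the $\xi$'s as structural isomorphisms, then furnishes a symmetric monoidal equivalence $\A\rtimes_{g,\mu}\!M\simeq\M$.

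For the ``if'' direction of $(ii)$, given $i:M\cong M'$, $\psi:\A\cong i^*\A'$, and a $4$-cochain $f\in C^4(M,3;\A)$ with $\partial^4 f=\psi_*^{-1}i^*(g',\mu')-(g,\mu)$, I would define a symmetric monoidal functor $(F,\varphi,\varphi_0):\A\rtimes_{g,\mu}\!M\to \A'\rtimes_{g',\mu'}\!M'$ by $F(x)=i(x)$, $F(a_x)=\psi_x(a_x)$, $\varphi_{x,y}=\psi_{xy}(f(x,y))$ and $\varphi_0=0_e$. Coherence axioms \eqref{eq5.5} and \eqref{eq5.7} then become precisely the component equations defining $\partial^4 f=\psi_*^{-1}i^*(g',\mu')-(g,\mu)$, while \eqref{eq5.6} holds by normalization of $f$. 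For the ``only if'' direction, any symmetric monoidal equivalence produces (after passing to skeletal models via part $(i)$) a bijection $i:M\to M'$ on objects, natural group isomorphisms $\psi_x:\A(x)\cong\A'(i(x))$ from its action on automorphism groups, and a $4$-cochain $f(x,y):=\psi_{xy}^{-1}(\varphi_{x,y})\in\A(xy)$. Naturality of $\varphi$ together with \eqref{eq5.5} forces $i(xy)=i(x)i(y)$, so $i$ is a monoid isomorphism and $\psi$ is a morphism of $\HH M$-modules; and \eqref{eq5.5} together with \eqref{eq5.7} yield $\partial^4 f=\psi_*^{-1}i^*(g',\mu')-(g,\mu)$, giving the required equality of cohomology classes.

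The principal obstacle will be the combinatorial bookkeeping in part $(i)$: choosing the $\xi_{x,y}$ so carefully that the resulting $(g,\mu)$ is a normalized cochain, and then matching the pentagon, hexagon, and $\boldsymbol{c}^2=0$ identities term by term with the explicit $3$rd level coboundary formula for $\partial^5$ recorded in the description of $C^6(M,3;\A)$. One can mitigate this by first invoking Mac Lane's coherence theorem to replace $\M$ by a strictly associative and unital model, so that only the symmetry $\boldsymbol{c}$ contributes to $\mu$ and only the associator $\boldsymbol{a}$ contributes to $g$. Once the dictionary is established, both directions of part $(ii)$ reduce to a straightforward reinterpretation of the coherence axioms for symmetric monoidal functors and monoidal natural isomorphisms as coboundary and cocycle relations in the truncated complex $C(M,3;\A)$.
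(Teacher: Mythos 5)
Your proposal is correct and follows essentially the same route as the paper: reduce $\M$ to a totally disconnected, strictly unitary model with object monoid $\pi_0(\M)$ and vertex groups $\A(x)=\mathrm{Aut}_\M(x)$, read off $(g,\mu)$ from the constraints, and translate the coherence axioms for symmetric monoidal functors and isomorphisms into the coboundary relations in $C(M,3;\A)$. The only difference is presentational: where you carry out the skeletalization by hand via chosen isomorphisms $\xi_{x,y}$, the paper first strictifies the units by Mac Lane's coherence theorem and then invokes the generalized Brandt theorem together with Saavedra's transport of structure, so that $\M$ becomes literally equal to $\A\rtimes_{g,\mu}\!M$ and no conjugation formulas are needed (and in part $(ii)$ it likewise normalizes $F$ to be strictly unitary by citing a lemma, a small reduction you should make explicit so that the extracted $4$-cochain $f$ is normalized).
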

\begin{proof} $(i)$  Let $\M=(\M, \otimes, \I,
\boldsymbol{a},\boldsymbol{l}, \boldsymbol{r},\boldsymbol{c})$ be any
given symmetric monoidal abelian groupoid.

By the coherence theorem \cite{Mac2}, there is no loss of generality in assuming that $\M$ is itself strictly unitary, that is, where both  unit constraints $\boldsymbol{l}$ and $\boldsymbol{r}$ are identities. Then, we observe that $\M$ is symmetric monoidal equivalent to another one that is totally disconnected, that is,  where there is no morphism between different objects. Indeed, by the generalized Brandt's theorem \cite[Chapter 6, Theorem 2]{hig},  there is a totally disconnected groupoid, say $\M'$, with an equivalence of groupoids $\M\to \M'$. Hence, by
Saavedra \cite[I,
4.4]{Saa}, we can transport the symmetric monoidal structure
along this equivalence so that $\M'$ becomes a strictly unitary symmetric monoidal abelian groupoid and the equivalence a symmetric monoidal one.

Hence, we assume that $\M$ is totally disconnected and
 strictly unitary. Then,
a triplet $(M,\A,(g,\mu))$, such that $\A\rtimes_{g,\mu}\!M =
\M$ as symmetric monoidal abelian groupoids, can be defined as follows:

\vspace{0.2cm} $\bullet$ {\em The monoid $M$}. Let $M=\mathrm{Ob}\M$ be the
set of objects of $\M$. The tensor
functor $\otimes:\M\times \M\to\M$ determines a multiplication on
$M$, simply by $xy=x\otimes y$, for any $x,y\in M$. Since $\M$ is strictly unitary, this multiplication on $M$ is
unitary with $e=\I$, the unit object of $\M$. Moreover, it is
associative and commutative since $\M$ being totally disconnected implies that $(xy)z=x(yz)$ and
$xy=yx$. Thus, $M$ becomes a commutative monoid.

\vspace{0.2cm} $\bullet$ {\em The $\HH M$-module $\A$}.
For each $x\in M=\mathrm{Ob}\M$, let $\A(x)=\mathrm{Aut}_\M(x)$ be the
vertex group of the underlying abelian groupoid at $x$.
Since the diagrams below commute, due to the
naturality of the structure constraints and the symmetry,
$$
\xymatrix@R=26pt{(xy)z\ar[d]_{(a_x\otimes a_y)\otimes a_z}
\ar[r]^{\boldsymbol{a}_{x,y,z}}&x(yz)\ar[d]^{a_x\otimes (a_y\otimes a_z)}\\
(xy)z\ar[r]^{\boldsymbol{a}_{x,y,z}}&x(yz)
}\hspace{0.2cm}
\xymatrix@R=27pt{xy\ar[d]_{a_x\otimes a_y}
\ar[r]^{\boldsymbol{c}_{x,y}}&yx\ar[d]^{a_y\otimes a_x}\\
xy\ar[r]^{\boldsymbol{c}_{x,y}}&yx
}\hspace{0.2cm}
\xymatrix@R=26pt{ex=x\ar[d]_{0_e\otimes a_x}
\ar[r]^-{0_x}&x\ar[d]^{ a_x}\\
ex=x\ar[r]^-{0_x}&x
}
$$
it follows that the equations below hold.
\begin{equation}\label{acubeha}
(a_x\otimes a_y)\otimes a_z = a_x\otimes ( a_y\otimes a_z),\hspace{0.2cm} a_x\otimes a_y=a_y\otimes a_x,
\hspace{0.2cm} 0_e\otimes a_x=a_x,
\end{equation}

Then, if we write $y_*:\A(x)\to \A(xy)$ for the homomorphism such that
$$y_*a_x=0_y\otimes a_x=a_x\otimes 0_y,$$ the equalities
$$
\begin{array}{l} (yz)_*(a_x)=0_{yz}\otimes a_x=(0_y\otimes 0_z)\otimes a_x\overset{(\ref{acubeha})}=
0_y\otimes (0_z\otimes a_x )=y_*(z_*a_x), \\
e_*a_x=0_e\otimes a_x\overset{(\ref{acubeha})}=a_x,
\end{array}
$$
implies that the assignments $x\mapsto \A(x)$, $(x,y)\mapsto y_*:\A(x)\to
\A(xy)$, define an $\HH M$-module. Observe that this $\HH M$-module $\A$ determines indeed the tensor product $\otimes$ of $\M$,
since
\begin{align}\nonumber a_x\otimes a_y\ =\ & (a_x+0_x)\otimes (0_y+
a_y) = (a_x\otimes 0_y)+ (0_x\otimes a_y)
=\  y_*a_x+x_*a_y.
\nonumber
\end{align}

$\bullet$ {\em The $3$rd level $5$-cocycle $(g,\mu)\in Z^5(M,3;\A)$}. The
associativity constraint  and the symmetry of $\M$  can be
written in the form $\boldsymbol{a}_{x,y,z}=g(x,y,z)$ and
$\boldsymbol{c}_{x,y}=\mu(x,y)$, for some given lists
$\big(g(x,y,z)\in \A(xyz)\big)_{^{x,y,z\in M}}$ and
$\big(\mu(x,y)\in \A(xy)\big)_{^{x,y\in M}}$. Since $\M$ is
strictly unitary, equations in \eqref{eq5.2} and \eqref{eqad1}
give the normalization conditions  $g(x,e,y)=0=g(e,x,y)=g(x,y,e)$
for $g$, while equations in \eqref{eqad2} imply the normalization
conditions  $\mu(x,e)=0=\mu(e,x)$ for $\mu$. Thus, $(g,\mu)\in
C^5(M,3;\A)$ is a 3rd level 5-cochain. By the coherence conditions \eqref{eq5.1},
\eqref{eq5.3}, and \eqref{eq5.4} we have that
\begin{align}\nonumber
&g(x,y,zt)+g(xy,z,y)=x_*g(y,z,y)+ g(x,yz,y)+t_*g(x,y,z)\\ \nonumber
& y_*\mu(x,z)+g(y,x,z)+ z_*\mu(x,y)=g(y,z,x)+\mu(x,yz)+g(x,y,z),\\ \nonumber
& \mu(x,y)+\mu(y,x)=0,
\end{align}
and combining the last two equations we also have
$$
-y_*\mu(z,x)+g(y,x,z)- z_*\mu(y,x)=g(y,z,x)-\mu(yz,x)+g(x,y,z).
$$
Hence, we obtain the required cocycle condition $\partial^3(g,\mu)=(0,0,0)$.
Since a direct comparison shows that
$\M=\A\rtimes_{g,\mu}\!M$  as symmetric monoidal abelian groupoids, the proof of this part is complete.

\vspace{0.2cm} $(ii)$ Suppose there exist an
isomorphism of monoids $i:M\cong M'$ and a natural isomorphism
$\psi:\A\cong i^*\A'$ such that $\psi_*[g,\mu]=i^*[g',\mu']\in
H^5(M,3;i^*\A') $. This  implies that there is a 3rd level 4-cochain
$f\in C^4(M,3;i^*\A')=C^2(M,1;i^*\A')$ such that
\begin{align} \label{qa1}
  \psi_{xyz} g(x,y,z)=& g'(ix,iy,iz)+(ix)_*f(y,z)-f(xy,z)+f(x,yz)-(iz)_*f(x,y),\\[4pt] \label{qa2}
   \psi_{xy}\mu(x,y)=&\mu'(ix,iy)-f(x,y)+f(y,x).
\end{align}

Then, a symmetric monoidal isomorphism
\begin{equation*}\label{fg}F(f)=(F,\varphi,\varphi_0):\A\rtimes_{g,\mu}\!M \to
\A'\rtimes_{g'\!,\mu'}\!M'.\end{equation*}
can be defined as follows: The
underlying functor acts by $F(a_x:x\to x)=(\psi_xa_x:ix\to ix)$.
The constraints of $F$ are given by
$\varphi_{x,y}=f(x,y):(ix)\, (iy) \to i(xy)$, and
$\varphi_0=0_e:e\to ie=e$. So defined, it is easy to see that $F$ is
an isomorphism between the underlying groupoids.
The naturality of the isomorphisms $\varphi_{x,y}$, that is,
\begin{equation}\label{natfi}
 \psi_{xy}(x_*a_y+y_*a_x) + \varphi_{x,y}=\varphi_{x,y}+(ix)_*\psi_{y}a_y+ (iy)_*\psi_{x}a_x
 \end{equation}
for  $a_x\in \A(x)$, $a_y\in\A(y)$, holds owing to the commutativity of
$\A'(i(xy))$ and the naturality of $\psi:\A\cong i^*\A'$, which says
that
\begin{equation}\label{ea3}\psi_{xy}(x_*a_y)=(ix)_*\psi_ya_y.\end{equation}
 The coherence
conditions $(\ref{eq5.5})$ and $(\ref{eq5.7})$ are obtained as a consequence of equations \eqref{qa1} and
\eqref{qa2}, respectively, whereas the conditions in \eqref{eq5.6} trivially follow from the
normalization conditions $f(x,e)=0_{ix}=f(e,x)$.

Conversely, suppose we have
$F=(F,\varphi,\varphi_0):\A\rtimes_{g,\mu}\!M \to
\A'\rtimes_{g'\!,\mu'}\!M'$ a symmetric monoidal equivalence. By
\cite[Lemma 18]{cegarra2}, there is no loss of generality in
assuming that $F$ is strictly unitary in the sense that
$\varphi_0=0_e:e\to e=Fe$. As the underlying functor establishes an
equivalence between the underlying groupoids, and these are
totally disconnected, it is an isomorphism.

We write $i:M\cong M'$ for the bijection established by $F$ between the object sets; that is, such that $ix=Fx$, $x\in M$.
Then, $i$ is actually an isomorphism of monoids, since the existence
of the structure isomorphisms $\varphi_{x,y}: (ix)(iy)\to i(xy)$
implies $(ix)(iy)=i(xy)$.

Let us write now $\psi_x:\A(x) \cong \A'(ix)$ for the isomorphism such that
$Fa_x=\psi_xa_x$, for each automorphism $a_x\in \A(x)$, and  $x\in M$. The
naturality of the automorphisms $\varphi_{x,y}$ tell us that the
equalities \eqref{natfi} hold.
In particular, when $a_x=0_x$, we obtain the equation
\eqref{ea3} and so $\psi:\A\cong i^*\A'$ is indeed a natural
isomorphism.

Finally, if we write $f(x,y)=\varphi_{x,y}$, for each $x,y\in M$, we have a 3rd level $4$-cochain
$
f(F)=\big(f(x,y)\in \A'(i(xy))\big)_{^{x,y\in M}},
$
since the equations $f(x,e)=0_{ix}=f(e,x)$ hold due to \eqref{eq5.6}.
Equations \eqref{qa1} and \eqref{qa2} follow from
to the coherence equations \eqref{eq5.5} and \eqref{eq5.7}. This means that $\psi_*(g,\mu) =
i^*(g',\mu')+\partial^4f$ and, therefore,  we have that
$\psi_*[g,\mu] = i^*[g',\mu']\in  H^5(M,3;i^*\A')$, whence  $[g,\mu]
= \psi_*^{-1}i^*[g',\mu']\in H^5(M,3;\A)$.
\end{proof}

\section{Cohomology of cyclic monoids}\label{sect7}
In this section we compute the cohomology groups $H^n(C,r;\A)$, for $n\leq r+2$, when $C$ is any cyclic monoid. The method we employ follows similar lines to the one used by Eilenberg and Mac Lane in \cite[\S 14 and \S 15]{E-M-II}, for computing higher level cohomology of cyclic groups, though the generalization to monoids is highly nontrivial.
\subsection{Cohomology of finite cyclic monoids}$~$

The structure of finite cyclic monoids was first stated by Frobenius \cite{Frobenius}.
Briefly, let us recall that if $\equiv$ is any not equality congruence on the additive monoid
$\N=\{0,1,\dots\}$ of natural numbers, then the least $m\geq 0$ such that $m\equiv x$ for some $x\neq m$ is called the {\em index} of the congruence, and the least
$q\geq 1$ such that $m\equiv m+q$ is called its {\em period}. Hence,
$$
x\equiv y \text{\ \  if and only if either \ } x=y<m, \text{\ or\ } x,y\geq m \text{\ and\  } x\equiv y \!\!\! \mod{q}.
$$
The quotient $
\N/\equiv
$
is called the {\em cyclic monoid of index $m$ and period $q$}, and denoted here $C_{m,q}$.
As $\N$ is a free monoid on the generator 1, every finite cyclic monoid is isomorphic to a proper quotient of $\N$ and, therefore, to a monoid  $C_{m,q}$ for some $m$ and $q$.

\vspace{0.2cm}
From now on, $C=C_{m,q}$ denotes the finite cyclic monoid of index $m$ and period $q$. We assume that $m + q \geq 2$, so that $C$ is not the zero monoid.
\vspace{0.2cm}

Since every element of $C$ can be written uniquely in the form $[x]$ with $0\leq x<m+q$,  this monoid can be described as the set
$$
C=\{0,1,\dots, m,m+1,\dots, m+q-1\},
$$
with addition
$$
x\oplus y = \wp(x+y),
$$
where $\wp:\N\to C$ is the projection map given by
\begin{equation*}
\wp( x) =\left\{\begin{array}{lll}x&\text{if}&  x<m+q\\
x-kq&\text{if}& m+kq\leq x<m+(k+1)q.
\end{array}\right.
\end{equation*}

To any pair $x,y\in C$, we can associate the useful integer
$$
s(x,y)=\frac{ (x+y)-(x\oplus y)}{ q},
$$
which satisfies $s(x,y)\geq 1$ if $x+y\geq m+q$, whereas $s(x,y)=0$ if $x+y<m+q$. It follows directly from the associativity in $C$ that the cocycle property below holds.
\begin{equation}\label{sisco}
s(y,z)+s(x,y\oplus z)=s(x\oplus y,z)+s(x,y).
\end{equation}

Next, we construct a specific commutative DGA-algebra over $\HH C $, denoted by $$\mathcal{R}=\mathcal{R}(C),$$
which is homologically equivalent to $\B(\ZZ C)$ but algebraically simpler and more lucid.
For each integer $k=0,1,\dots$, let us choose unitary sets over $ C$, $\{ v_k\}$ and $\{ w_k\}$, with
\begin{equation}\label{a'1}\begin{array}{cc}\pi v_k=\wp(km),& \pi w_k=\wp(km+1),\end{array}\end{equation}
and define
\begin{equation}\label{a'2}\left\{
\begin{array}{lcc}
\mathcal{R}_{2k}&=& \text{the free } \HH C\text{-module on } \{v_k\},\\[4pt]
\mathcal{R}_{2k+1}&=& \text{the free } \HH C\text{-module on } \{w_k\}.
\end{array}\right.
\end{equation}
The augmentation $\alpha:\mathcal{R}_0\to \Z$, the differential $\partial:\mathcal{R}_n\to\mathcal{R}_{n-1}$, and the multiplication  $\circ:\mathcal{R}\otimes_{\HH C }\mathcal{R}\to \mathcal{R}$
are determined by the equations
 \begin{equation}\label{difcep}
\begin{array}{lcl}
\alpha v_0=1,&
\partial v_{k+1} = (m+q)\big((m+q-1)_*w_k\big)-m\big((m-1)_*w_k\big),&
\partial w_k= 0,
\end{array}
\end{equation}
\begin{equation}\label{mulep}\begin{array}{lcl}
v_k\circ v_l={\scriptsize \begin{pmatrix}k+l\\k  \end{pmatrix}}v_{k+l},&
w_k\circ w_l=0,  &
v_k\circ w_l={\scriptsize \begin{pmatrix}k+l\\k  \end{pmatrix}}w_{k+l}=w_l\circ v_k,
\end{array}
\end{equation}
 and the unit is $v_0$.
 \begin{proposition} $\mathcal{R}$, defined as above, is a commutative DGA-algebra over $\HH C $.
 \end{proposition}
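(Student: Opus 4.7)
The plan is to verify the DGA-algebra axioms \eqref{dga1}--\eqref{dga5}, together with well-definedness of $\partial$ and $\circ$ as morphisms of $\HH C$-modules and the compatibility of the augmentation $\alpha$ with both. Since every $\mathcal{R}_n$ is a free $\HH C$-module on a single generator, Proposition \ref{adfu} and Proposition \ref{fsmon} reduce every verification to a check on the chosen generators $v_k$, $w_k$, provided the relevant projections in $C$ agree.

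First I would verify that $\partial$ is well-defined. The crucial observation is $\pi\big((m+q-1)_*w_k\big)=\wp(km+1+m+q-1)=\wp((k+1)m+q)=\wp((k+1)m)=\pi v_{k+1}$, where the equality $\wp((k+1)m+q)=\wp((k+1)m)$ holds because both integers are at least $m$ and differ by the period $q$; likewise $\pi\big((m-1)_*w_k\big)=\wp((k+1)m)$. Hence both summands of $\partial v_{k+1}$ lie in $\mathcal{R}_{2k+1}(\pi v_{k+1})$. The identity $\partial^2=0$ then reduces to the triviality $\partial^2 v_{k+1}=(m+q)(m+q-1)_*\partial w_k-m(m-1)_*\partial w_k=0$, using $\partial w_k=0$.

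Next I would verify that $\circ$ is a well-defined morphism of $\HH C$-modules from $\mathcal{R}\otimes_{\HH C}\mathcal{R}$ to $\mathcal{R}$: by the Corollary to Proposition \ref{fsmon}, the tensor products $\mathcal{R}_p\otimes_{\HH C}\mathcal{R}_q$ are free on $v_k\otimes v_l$, $v_k\otimes w_l$, $w_k\otimes w_l$, so one need only check on these generators. The projections match, e.g.\ $\wp(km)\oplus\wp(lm)=\wp((k+l)m)=\pi v_{k+l}$ and $\wp(km)\oplus\wp(lm+1)=\wp((k+l)m+1)=\pi w_{k+l}$. Associativity and graded-commutativity follow on generators from $\binom{k+l}{k}\binom{k+l+n}{n}=(k+l+n)!/(k!\,l!\,n!)$ and $\binom{k+l}{k}=\binom{k+l}{l}$; the Koszul sign $(-1)^{(2k)(2l+1)}=+1$ makes $v_k\circ w_l=w_l\circ v_k$ consistent with \eqref{mulep}, and the sign $(-1)^{(2k+1)(2l+1)}=-1$ in $w_k\circ w_l=-w_l\circ w_k$ is vacuously satisfied since both sides vanish. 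The unit law \eqref{dga3} is $\binom{k}{0}=1$.

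The main obstacle is the Leibniz rule \eqref{dga5}. The mixed and odd-odd cases are essentially automatic because $\partial w_k=0$ and $w_i\circ w_j=0$: for instance $\partial(v_k\circ w_l)=\binom{k+l}{k}\partial w_{k+l}=0$ matches $\partial v_k\circ w_l+v_k\circ\partial w_l$, as $(m+q-1)_*w_{k-1}\circ w_l=(m+q-1)_*(w_{k-1}\circ w_l)=0$, and similarly for the $m$-term. The non-trivial case is $\partial(v_k\circ v_l)=\binom{k+l}{k}\partial v_{k+l}$ versus $\partial v_k\circ v_l+v_k\circ\partial v_l$. Using that $\circ$ commutes with the shifts via $(y_*a)\circ b=y_*(a\circ b)$ together with $w_{k-1}\circ v_l=\binom{k+l-1}{k-1}w_{k+l-1}$ and $v_k\circ w_{l-1}=\binom{k+l-1}{k}w_{k+l-1}$, the sum $\partial v_k\circ v_l+v_k\circ\partial v_l$ becomes
\[
\bigl[\tbinom{k+l-1}{k-1}+\tbinom{k+l-1}{k}\bigr]\!\bigl[(m+q)(m+q-1)_*w_{k+l-1}-m(m-1)_*w_{k+l-1}\bigr],
\]
which equals $\binom{k+l}{k}\partial v_{k+l}$ by Pascal's identity. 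Finally, compatibility of the augmentation is immediate: $\alpha\partial w_0=\alpha(0)=0$ and $\alpha(v_0\circ v_0)=\alpha(v_0)=1=\alpha(v_0)\alpha(v_0)$.
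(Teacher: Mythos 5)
Your proof is correct and follows essentially the same route as the paper's: well-definedness of $\partial$ and $\circ$ via \ref{adfu} and the projection identities, associativity from the multinomial coefficient identity, and the Leibniz rule from Pascal's identity $\binom{k+l-1}{k-1}+\binom{k+l-1}{k}=\binom{k+l}{k}$. You merely make explicit a few points the paper declares "quite obviously verified" (the graded signs, $\partial^2=0$, and compatibility with the augmentation), which is fine.
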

 \begin{proof}
 By Proposition \ref{adfu}, the mapping in \eqref{difcep},
 $v_{k+1}\mapsto \partial v_{k+1}$,
 determines a morphism of $\HH C $-modules $\partial: \mathcal{R}_{2k+2}\to \mathcal{R}_{2k+1}$ since
 $$\begin{array}{l}
(m+q-1)\oplus \pi w_k \overset{\eqref{a'1}}=(m+q-1)\oplus \wp(km+1)=\wp(m+q+km)=\wp(m+km)=\pi v_{k+1},\\[4pt]
(m-1)\oplus \pi w_k  \overset{\eqref{a'1}}= (m-1)\oplus \wp(km+1)=\wp(m+km)=\pi v_{k+1},
\end{array}
 $$
 and therefore $\partial v_{k+1}\in \mathcal{R}_{2k+1}(\pi v_{k+1})$. Similarly,
 by Proposition \ref{adfu}, we see that the formulas in \eqref{mulep} determine a multiplication morphism
 of $\HH C $-modules since $\wp(km)\oplus \wp(lm)=\wp((k+l)m)$ and $\wp(km)\oplus \wp(lm+1)=\wp((k+l)m+1)$.
 Associativity condition \eqref{dga4} follows  from the equality on combinatorial numbers
 $${\scriptsize \begin{pmatrix}k+ l+t\\k\end{pmatrix}}+ {\scriptsize \begin{pmatrix} l+t\\t\end{pmatrix}}=
 \frac{(k+l+t)!}{k!\,l!\,t!}={\scriptsize  \begin{pmatrix}k+ l+t\\k+l\end{pmatrix}}+ {\scriptsize  \begin{pmatrix} k+l\\l\end{pmatrix}},
 $$
 while condition \eqref{dga5} holds thanks to the equality
 $$
 {\scriptsize \begin{pmatrix} k+l-1\\k-1\end{pmatrix}}+ {\scriptsize  \begin{pmatrix} k+l-1\\k\end{pmatrix}}= {\scriptsize \begin{pmatrix} k+l\\ k\end{pmatrix}},$$
 and the remaining conditions in \eqref{dga1}-\eqref{dga3} are quite obviously verified.
  \end{proof}

In  next proposition we shall define a morphism  $f:\B(\ZZ C)\to \mathcal{R}$.
Previously, observe that the graded $\HH C $-module $\{\mathcal{R}_n\}$
admits another structure of commutative graded algebra over $\HH C $ (although it does not respect
the differential structure), whose multiplication is determined by the simpler formulas
$$\begin{array}{lcl}
v_k\bullet v_l=v_{k+l},&w_k\bullet  w_l=0,&v_k\bullet  w_l=w_{k+l}=w_l\bullet  v_k.
\end{array}$$

\begin{proposition}\label{gep1}   A  morphism $f:\B(\ZZ C)\to \mathcal{R}$, of DGA-algebras over $\HH C $,
 may be defined by the recursive formulas
\begin{equation}\label{deff}\left\{\begin{array}{ccl}f[\ ]&=&v_0,\\ f[x]&=&x((x-1)_*w_0), \\[5pt]
 f[x\mid y]&=&\left\{ \begin{array}{lcl}0&\text{if}&x+y<m+q,\\[4pt] ((x\oplus y)\text{-}m)_*\Big(\sum\limits_{i=0}^{s(x,y)\text{-}1} (iq)_*v_1\Big) &\text{if}& x+y\geq m+q,\end{array}\right.\\[20pt]
 f[x\mid y\mid \sigma]&=& f[x\mid y]\bullet f[\sigma],
 \end{array}\right.
\end{equation}
where $\sigma=[z|\cdots]$ is any cell of dimension 1 or greater.
\end{proposition}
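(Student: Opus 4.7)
My plan is to verify in turn: (i) well-definedness of $f$ as a morphism of $\HH C$-modules, (ii) preservation of unit and augmentation, (iii) compatibility with the differential, and (iv) multiplicativity with respect to $\circ$. Step (i) is a matter of tracking $\pi$ through each formula. The key observation is that $\wp(m+iq)=m$ for every $i\ge 0$, so each summand $(iq)_*v_1$ of $f[x|y]$ lies in $\mathcal{R}_2(m)$, and since $x+y\ge m+q$ forces $x\oplus y\ge m$, the shift $((x\oplus y)-m)_*$ lands the sum in $\mathcal{R}_2(x\oplus y)=\mathcal{R}_2(\pi[x|y])$. The $\bullet$-recursion then inherits well-definedness from $\bullet$ on $\mathcal{R}$, and Proposition~\ref{adfu} extends $f$ uniquely to a morphism of $\HH C$-modules. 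Step (ii) is immediate: $f[\ ]=v_0$ is the unit of $\mathcal{R}$, and $\alpha v_0=1$ matches the augmentation of $\B(\ZZ C)$.

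For step (iii), I would verify $\partial f=f\partial$ on generic cells by induction on length. The case $[x]$ is trivial (both sides vanish, since $\partial w_0=0$ and $\partial[x]=x_*[\ ]-x_*[\ ]=0$). The case $[x|y]$ splits on whether $x+y<m+q$ (both sides zero) or $x+y\ge m+q$; in the latter, the calculation rests on the telescoping identity
\begin{equation*}
\partial\Bigl(\sum_{i=0}^{s-1}(iq)_*v_1\Bigr)=(m+sq)(m+q-1)_*w_0-m(m-1)_*w_0,
\end{equation*}
which follows from $\wp(iq+m+q-1)=m+q-1$ for all $i\ge 0$ together with $\wp(iq+m-1)=m-1$ if $i=0$ and $=m+q-1$ if $i\ge 1$. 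One then matches this against $f\partial[x|y]=x_*f[y]-f[x\oplus y]+y_*f[x]$ by a brief case analysis on whether $x\oplus y=m$ or $x\oplus y>m$. For longer cells one uses the recursion and the inductive hypothesis, peeling off the first two entries of the cell.

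Step (iv), multiplicativity, is where the recursion's use of the auxiliary product $\bullet$ must be reconciled with the multiplication $\circ$ on $\mathcal{R}$. These differ by binomial coefficients, $v_k\circ v_l=\binom{k+l}{k}\,v_k\bullet v_l$ and analogously for $v_k\circ w_l$, whereas on the $\B(\ZZ C)$ side the shuffle product of generic cells expands into many summands. The plan is to show that under $f$ the surviving contributions from the shuffle expansion reproduce exactly these binomial factors; the cocycle identity~\eqref{sisco} is essential here for tracking how $s(x,y)$ accumulates across iterated products.

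The main obstacle in both (iii) and (iv) is that a naive Leibniz rule (resp.\ distributivity) fails for the auxiliary product $\bullet$: one checks at once that $\partial(v_1\bullet v_1)=\partial v_2=\tfrac12\bigl(\partial v_1\bullet v_1+v_1\bullet\partial v_1\bigr)$, so the identities cannot be propagated to higher cells by a routine Leibniz expansion. Instead one must exploit the very specific form of $f[x|y]$---a sum of $s(x,y)$ shifted copies of $v_1$---together with the cocycle identity for $s$ and standard binomial identities, in order to extract the correct combinatorial factors at every stage. This combinatorial bookkeeping, more than any conceptual step, is where essentially all the work of the proof is concentrated.
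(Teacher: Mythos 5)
Your skeleton (well-definedness via $\pi$-tracking and Proposition \ref{adfu}, then $\partial f=f\partial$, then multiplicativity) matches the paper's, and your telescoping formula for $\partial\bigl(\sum_{i=0}^{s-1}(iq)_*v_1\bigr)$ is correct and is exactly the computation the paper carries out for $2$-cells. But your diagnosis of the ``main obstacle'' is mistaken, and it leads you to leave the two genuinely hard steps unproved. The failure of the symmetric Leibniz rule for $\bullet$ is a red herring: what holds on the chains that arise here, and what the paper uses (its identity \eqref{forpoint}), is the \emph{one-sided} rule $\partial(c\bullet d)=c\bullet\partial d$, simply because $\partial w_k=0$ and any $\bullet$-product with a $w$-factor is again a translate of some $w$ or zero. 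Combined with the identity $\partial[a\mid x\mid b]=[\partial[a\mid x]\mid b]+[a\mid\partial[x\mid b]]$ for $a$ an even chain (the paper's \eqref{fpch}), this makes the inductive step for cells of dimension $\geq 4$ completely routine; no ``combinatorial bookkeeping at every stage'' is needed.

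What that induction does require, and what your plan omits, is the base case in dimension $3$: the reduction produces the term $f\partial[x\mid y\mid z]$, and one must prove outright that $x_*f[y\mid z]+f[x\mid y\oplus z]=z_*f[x\mid y]+f[x\oplus y\mid z]$ (the paper's \eqref{f3-cocy}). This is a nontrivial case analysis, and it is here --- not in the multiplicativity step, as you claim --- that the cocycle identity \eqref{sisco} for $s$ is actually used. Second, for multiplicativity you assert that ``the surviving contributions from the shuffle expansion reproduce exactly these binomial factors'' without saying why anything fails to survive. The mechanism (Eilenberg--Mac Lane's) is that a \emph{mixed} term of the shuffle product, one in which some consecutive pair $t_{2i-1},t_{2i}$ consists of an entry of $\sigma$ and an entry of $\tau$, is sent by $f$ to the same element as the term obtained by transposing $t_{2i-1}$ and $t_{2i}$, because $f[x\mid y]$ is symmetric in $x,y$; since the two shuffles carry opposite signs, all mixed terms cancel in pairs, and the $\binom{r+r'}{r}$ unmixed shuffles each contribute $f(\sigma)\bullet f(\tau)$ with sign $+1$. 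Without this cancellation argument the binomial factors cannot be extracted, so as it stands your step (iv) is a statement of intent rather than a proof.
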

\begin{proof} This is divided into four parts. Note first that, from the inequalities
 $$
m+s(x,y)q\leq (x\oplus y)+s(x,y)q=x+y<2m+2q-1,
$$
 it follows that  $s(x,y)q<m+2q-1$. Therefore, for any $0\leq i<s(x,y)$, we have $iq=\wp(iq)\in C$ and the formula above for $f[x\mid y]$ is well defined.

 {\em Part 1.}  We prove in this step that the assignment in \eqref{deff} extends to a morphism of complexes of
 $\HH C $-modules.
 This follows from  Proposition \ref{adfu}, since
 one verifies recursively that
 $$f[x_1\mid\cdots\mid x_n]\in \mathcal{R}_{n}(x_1\oplus \cdots\oplus x_n)$$
 as follows: The case when $n=0$ is obvious. When $n=1$, it holds since $w_0\in \mathcal{R}_1$ and
 $(x_1-1)\oplus \pi w_0=(x_1-1)\oplus 1=x_1$, and for $n=2$ since $v_1\in \mathcal{R}_2$ and
$$((x_1\oplus x_2)\text{-}m)\oplus \pi v_1= ((x_1\oplus x_2)\text{-}m)\oplus m=x_1\oplus x_2.$$
Then, for $n\geq 3$, induction gives
$$f[x_1|\cdots| x_n]=f[x_1| x_2]\bullet f[x_3|\cdots|x_n]\in \!\mathcal{R}_{2}(x_1\oplus x_2)\bullet \mathcal{R}_{n-2}(x_3\oplus \cdots\oplus x_n)
\subseteq \!
 \mathcal{R}_{n}(x_1\oplus \cdots\oplus x_n).$$

\noindent {\em Part 2.}  We prove now that  $\partial f=f\partial$.

\underline{For a 1-cell} $[x]$ of $\B(\ZZ C)$, we have $\partial f[x]=x((x-1)_*\partial w_0)\overset{\eqref{difcep}}=0= f\partial[x]$.

\underline{For a 2-cell} $[x\mid y]$, we have
$$
f\partial [x\mid y]=x_*f[y]-f[x\oplus y]+y_*f[x].
$$
To compare with $\partial f[x\mid y]$, we shall distinguish three cases:

- {\em Case $x+y<m+q$.} In this case $\partial f[x\mid y]=0$, and also
\begin{align}\nonumber
f\partial [x\mid y]&=y((x+y-1)_*w_0)-(x+y)((x+y-1)_*w_0)+x((x+y-1)_*w_0)=0.
\end{align}

-{\em Case $x+y\geq m+q$ and $x\oplus y=m$.} Here, $(x-1)\oplus y=m+q-1 =x\oplus (y-1)$. Then,
\begin{align*}\nonumber
\partial f[x\mid y]&= \sum\limits_{i=0}^{s(x,y)\text{-}1} (m+q)((iq\oplus (m+q-1))_*w_0)\text{-}m((iq\oplus (m-1))_*w_0)\\
\nonumber &=(m+q)((m+q-1)_*w_0)\text{-}m((m-1)_*w_0)\\ &\hspace{0.4cm}+\sum\limits_{i=1}^{s(x,y)\text{-}1} (m+q)( (m+q-1)_*w_0)\text{-}m((m+q-1))_*w_0)\\
\nonumber &=(m+q)((m+q-1)_*w_0)\text{-}m((m-1)_*w_0)+ (s(x,y)-1)q((m+q-1)_*w_0)\\
\nonumber &=(m+s(x,y)q)((m+q-1)_*w_0)\text{-}m((m-1)_*w_0)\\
\nonumber &=(x+y)((m+q-1)_*w_0)\text{-}m((m-1)_*w_0)=f\partial [x\mid y].
\end{align*}

-{\em Case $x+y\geq m+q$ and $x\oplus y>m$.} In this case, $(x-1)\oplus y=(x\oplus y)-1=x\oplus (y-1)$, whence
\begin{align}\nonumber
\partial f[x\mid y]&=\sum\limits_{i=0}^{s(x,y)\text{-}1} ((x\oplus y)\text{-}m)\oplus iq)_*\partial v_1=
\sum\limits_{i=0}^{s(x,y)\text{-}1} (m+q)\big(((x\oplus y)\text{-}m)\oplus ((iq\oplus (m+q-1))_*w_0\big)
\\ \nonumber &\hspace{0.4cm} -\sum\limits_{i=0}^{s(x,y)\text{-}1}m\big(((x\oplus y)\text{-}m)\oplus(iq\oplus (m-1))_*w_0)
=  \sum\limits_{i=0}^{s(x,y)\text{-}1} (m+q)((x\oplus y)-1)_*w_0) \\ \nonumber &\hspace{0.4cm} -\sum\limits_{i=0}^{s(x,y)\text{-}1}m((x\oplus y)-1))_*w_0)
=qs(x,y)((x\oplus y)-1)_*w_0)\\ \nonumber &
=(y-(x\oplus y)+x)((x\oplus y)-1)_*w_0)=f\partial[x\mid y].
\end{align}

\underline{For a 3-cell} $[x\mid y\mid z]$,  we have to prove that $f\partial [x\mid y\mid z]=0$ or, equivalently, that
\begin{equation}\label{f3-cocy}
x_*f[ y\mid z]+f[x\mid y\oplus z]=z_*f[x\mid y]+f[x\oplus y\mid z].
\end{equation}

Since $x+(y\oplus z)=x\oplus y\oplus z+s(x,y\oplus z)q$, it follows that
$$x\oplus((y\oplus z)\text{-}m)=((x\oplus y\oplus z)\text{-}m)\oplus \wp(s(x,y\oplus z)q),$$
whenever $y\oplus z\geq m$. Then, we can write
\begin{align}\nonumber
x_*f[ y\mid z]&=\left\{\begin{array}{ll}0,&\text{if } s(y,z)=0,\\
(x\oplus((y\oplus z)\text{-}m))_*\Big(\sum\limits_{i=0}^{s(y,z)\text{-}1} \wp(iq)_* v_1\Big),&\text{if } s(y,z)\geq 1,
 \end{array} \right.\\ \nonumber
 &=\left\{\begin{array}{ll}0,&\text{if } s(y,z)=0,\\
((x\oplus y\oplus z)\text{-}m)_*\Big(\sum\limits_{i=0}^{s(y,z)\text{-}1} \wp\big(s(x,y\oplus z)q+iq\big)_* v_1\Big),&\text{if } s(y,z)\geq 1.
 \end{array} \right.
\end{align}
As
$$f[x\mid y\oplus z]=\left\{\begin{array}{ll}0,&\text{if } s(x,y\oplus z)=0,\\
((x\oplus y\oplus z)\text{-}m)_*\Big(\sum\limits_{i=0}^{s(x,y\oplus z)\text{-}1} \wp(iq)_* v_1\Big),&\text{if } s(x,y\oplus z)\geq 1,
 \end{array} \right.
$$
one concludes the formula
$$x_*f[ y\mid z]+f[x\mid y\oplus z]=\left\{\begin{array}{ll}0,&\hspace{-1cm}\text{if } s(y,z)=0=s(x,y\oplus z),\\
((x\oplus y\oplus z)\text{-}m)_*\Big(\sum\limits_{i=0}^{s(y,z)+s(x,y\oplus  z)\text{-}1}\wp (iq)_* v_1\Big),&\text{otherwise}.
 \end{array} \right.
$$

Similarly, one sees that
$$z_*f[x\mid y]+f[x\oplus y\mid z]=\left\{\begin{array}{ll}0,&\hspace{-1cm}\text{if } s(x,y)=0=s(x\oplus y,z),\\
((x\oplus y\oplus z)\text{-}m)_*\Big(\sum\limits_{i=0}^{s(x,y)+s(x\oplus y, z)\text{-}1}\wp (iq)_* v_1\Big),&\text{otherwise},
 \end{array} \right.
$$
and the equality in \eqref{f3-cocy} follows by comparison using \eqref{sisco}.

Finally,  \underline{for a cell}
$[x\mid y\mid z\mid t\mid \cdots]=[x\mid y\mid z\mid \tau]$
 \underline{of dimension higher than 3} we use the formulas
\begin{equation}\label{fpch}
\partial[a\mid x\mid b]=[\partial[a\mid x]\mid b]+[a\mid \partial[x\mid b]]
\end{equation}
which holds for any even chain $a$ and any other chain $b$ of $\B(\ZZ C)$, and
\begin{equation}\label{forpoint}
\partial(c\bullet d)=c\bullet \partial d,
\end{equation}
which holds for any chains $c,d\in \mathcal{R}$. Thus, as we know that $f\partial[x\mid y\mid z]=0$, induction gives
\begin{equation*}
\begin{split}
 f\partial[x\mid y\mid z\mid \tau]&\overset{\eqref{fpch}}=  f[\partial[x\mid y\mid z]\mid \tau]+f[x\mid y\mid \partial[z\mid \tau]]\\ &=
 f\partial[x\mid y\mid  z]\bullet f[\tau]+f[x\mid y]\bullet f\partial[z\mid \tau]
 =
 f[x\mid y]\bullet \partial f[z\mid \tau]\\&\overset{\eqref{forpoint}}=\partial(f[x\mid y]\bullet f[z\mid \tau])=\partial f[x\mid y\mid z\mid \tau]
\end{split}
 \end{equation*}

{\em Part 3.} Here we show that $f$ preserves products.
It is enough to prove that $f(\sigma \circ \tau)=f(\sigma)\circ f(\tau)$ for  cells $\sigma=[x_1\mid \cdots\mid x_n]$ and $\tau=[y_1\mid \cdots\mid y_{n'}]$ of $\B(\ZZ C)$.

As in \cite[page 99]{E-M-II}, a term $T=\pm[t_1\mid \cdots\mid t_{n+n'}]$ in the shuffle product \eqref{sp1} of $\sigma$ and $\tau$ is called {\em mixed} whenever there exists an index $i$ such that $t_{2i-1}$ is an $x$ of $\sigma$  and $t_{2i}$ an $y$ of $\tau$, or vice versa. Choose the first index $i$ for each
mixed $T$, and let $T' $ be the term obtained from $T$ by interchanging $t_{2i-1}$ with $t_{2i}$.
Since $f[x,y]$ is symmetric,
\begin{align}\nonumber
f(T)&=f[t_1\mid t_2]\bullet \cdots \bullet f[t_{2i-1}\mid t_{2i}]\bullet f[t_{2i+1}\mid \cdots ] \\ \nonumber
&=f[t_1\mid t_2]\bullet \cdots \bullet f[t_{2i}\mid t_{2i-1}]\bullet f[t_{2i+1}\mid \cdots ]=f(T').
\end{align}
Since $T$ and $T'$ have opposite signs, the
results cancel and $f(\sigma\circ \tau)=\sum f(T)$, with summation taken only over the unmixed
terms, and where the sign of each term due the shuffle is always plus. If $n=2r+1$ and $n'=2r'+1$ are both odd,
there are no unmixed terms, so $f(\sigma\circ \tau) =0$ in agreement with the fact that $f(\sigma)\circ f(\tau)=0$ (since $w_k\circ w_l=0$). If $n=2r$ and $n'=2r'$ are both even, the unmixed
terms $T$ are obtained by taking all shuffles of the $r$ pairs $(x_1,x_2),...,(x_{2r-1},x_{2r})$ through
the pairs $(y_1,y_2),...,(y_{2r'-1},y_{2r'})$. For any such a shuffle
$$
f(T)=f[x_1\mid x_2]\bullet \cdots\bullet f[x_{2r-1}\mid x_{2r}]\bullet f[y_1\mid y_2]\bullet \cdots \bullet f[y_{2r'-1},y_{2r'}]=f(\sigma)\bullet f(\tau)
$$
and the number of such shuffles is $\binom{r+r'}{r}$, hence
$$
f(\sigma\circ \tau)={\scriptsize \begin{pmatrix}r+r'\\r  \end{pmatrix}}f(\sigma)\bullet f(\tau)=f(\sigma)\circ f(\tau),
$$
as desired. For $n=2r$ and  $n'=2r'+1$, the unmixed terms $T$ are as above but with the last argument $y_{2r'+1}$
always at the end. Hence, for each of them
$$
f(T)=f[x_1\mid x_2]\bullet \cdots\bullet f[x_{2r-1}\mid x_{2r}]\bullet f[y_1\mid y_2]\bullet \cdots \bullet f[y_{2r'-1},y_{2r'}]\bullet f[y_{2r'+1}]=f(\sigma)\bullet f(\tau),
$$
and therefore $f(\sigma\circ \tau)={\scriptsize \begin{pmatrix}r+r'\\r \end{pmatrix}}f(\sigma)\bullet f(\tau)=f(\sigma)\circ f(\tau)$. The remaining case $n=2r+1$ and $n'=2r'$ is treated similarly.
\end{proof}

 \begin{proposition}\label{gep}   A  morphism $g:\mathcal{R}\to \B(\ZZ C)$, of DGA-algebras over $\HH C $,
 may be defined by the recursive formulas
\begin{equation}\label{defg}\left\{\begin{array}{ccl}gv_0&=&[\ ],\\[4pt] gw_k&=&[gv_k\mid 1],  \\[4pt]
 gv_{k+1}&=&\sum\limits_{t<m+q}(m+q-t-1)_*[gw_k\mid t]-\sum\limits_{s<m}(m-s-1)_*[gw_k\mid s].
 \end{array}\right.
\end{equation}
\end{proposition}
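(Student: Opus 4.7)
The plan is to verify, in order: (a) the formulas well-define graded $\HH C$-module morphisms $g_n: \mathcal{R}_n\to \B(\ZZ C)_n$; (b) $g$ commutes with differentials; (c) $g$ respects multiplication. Unit preservation is built into $gv_0=[\,]$, and augmentation compatibility follows since both $\mathcal{R}$ and $\B(\ZZ C)$ are augmented by the canonical isomorphism in degree zero. For (a), Proposition \ref{adfu} reduces the task to checking each recursively-defined chain sits at the required base point, which is a straightforward induction on $k$: $\pi[gv_k|1]=\wp(km)\oplus 1=\wp(km+1)=\pi w_k$, while the identities $(m+q-t-1)+(km+1)+t\equiv(k+1)m\equiv(m-s-1)+(km+1)+s\pmod q$ give the common base $\pi v_{k+1}$ for every summand of $gv_{k+1}$. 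Normalization of bar cells kills the $t=0$ and $s=0$ contributions.

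For (b), I would run a simultaneous induction on $k$ establishing $\partial gw_k=0$ (so that $g\partial w_k=g(0)=0$ matches) and $\partial gv_{k+1}=g\partial v_{k+1}$. The key computational tool is a Leibniz-type formula for the right-appending operation $[\alpha|t]$: for a chain $\alpha$ of degree $n$ and $t\in C$, $\partial[\alpha|t]$ equals $[\partial\alpha|t]$ plus $(-1)^{n+1}t_*[\alpha]$, plus a correction arising from merging the last entry of each cell of $\alpha$ with $t$. Applied to $gw_k=[gv_k|1]$, the inductive formula for $\partial gv_k$ expresses it as a signed $\HH C$-translate combination of $gw_{k-1}=[gv_{k-1}|1]$; the extra boundary contributions from appending $|1$ cancel all such terms. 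For $\partial gv_{k+1}$, expanding across both defining sums and invoking $\partial gw_k=0$, the merging corrections telescope in $t$ (and separately in $s$), leaving only endpoint contributions that combine into $(m+q)((m+q-1)_*gw_k)-m((m-1)_*gw_k)$, which is exactly $g\partial v_{k+1}$.

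For (c), I would induct on $k+l$ to verify $g(v_k\circ v_l)=gv_k\circ gv_l$, $g(v_k\circ w_l)=gv_k\circ gw_l$, and separately $gw_k\circ gw_l=0$. The recursive shuffle formula \eqref{forshufpro} decomposes $gv_k\circ gv_l$ into shuffles of smaller chains, and the binomial coefficient $\binom{k+l}{k}$ appearing in $v_k\circ v_l=\binom{k+l}{k}v_{k+l}$ emerges on the bar side as the number of $(k,l)$-shuffles interleaving the paired blocks that constitute $gv_k$ and $gv_l$. The vanishing of $gw_k\circ gw_l$ follows from a sign-symmetry argument: both chains end in $|1$, so swapping the two trailing $1$'s in every shuffle term is a sign-reversing involution that forces all contributions to cancel.

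The main obstacle will be the bookkeeping in step (b): the telescoping, sign tracking, and case analysis on wrap/no-wrap behavior of $\oplus$-addition must be handled uniformly using the projection $\wp$. The plan parallels Eilenberg and Mac Lane's \cite[\S 14]{E-M-II} treatment of cyclic groups, but the nonzero index $m$ of $C_{m,q}$ introduces genuinely new boundary cases, notably the second defining sum $\sum_{s<m}(m-s-1)_*[gw_k|s]$ for $gv_{k+1}$ and the corresponding $-m((m-1)_*w_k)$ term in $\partial v_{k+1}$, which are absent in the group setting where $m=0$.
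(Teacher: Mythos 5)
Your steps (a), the telescoping computation of $\partial gv_{k+1}$, and the Pascal-identity induction for $g(v_k\circ v_l)=gv_k\circ gv_l$ all run parallel to the paper's Parts 1, 3 and 4. But there is a genuine gap in your treatment of $gw_k\circ gw_l=0$. The proposed ``sign-reversing involution swapping the two trailing $1$'s'' is not an involution on the set of shuffles: if $\sigma$ places the trailing $1$ of the first block before some earlier entry of the second block, the swapped permutation violates the order constraint within a block and is not a shuffle. Concretely, the mechanism you describe would equally prove $[x\mid 1]\circ[y\mid 1]=0$ for arbitrary $x,y\in C$, yet a direct expansion (e.g.\ via \eqref{forshufpro}) gives
\begin{equation*}
[x\mid 1]\circ[y\mid 1]=[x\mid 1\mid y\mid 1]+[y\mid 1\mid x\mid 1]\neq 0;
\end{equation*}
only the terms in which the two $1$'s end up adjacent cancel in pairs. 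So the vanishing of $gw_k\circ gw_l$ is not a formal consequence of ``both chains end in $\mid 1$''; it depends on the specific structure of $gw_k$.

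What your plan is missing is the paper's Part 2, the auxiliary identities
\begin{equation*}
gv_k\circ[1]=gw_k,\qquad gw_k\circ[1]=0,
\end{equation*}
proved by a short induction starting from $[1]\circ[1]=[1\mid1]-[1\mid1]=0$. These are the engine of the whole argument: they give $gw_k\circ gw_l=gv_k\circ[1]\circ gw_l=\pm\, gv_k\circ gv_l\circ[1]\circ[1]=0$ legitimately, they reduce $gw_k\circ gv_l$ to $gv_k\circ gv_l$, and they also dispatch the case $\partial gw_{k+1}=0$ via $\partial gw_{k+1}=\partial gv_{k+1}\circ[1]=\big((m+q)((m+q-1)_*gw_k)-m((m-1)_*gw_k)\big)\circ[1]=0$ --- a step your sketch for (b) handles only by an unverified claim that ``the extra boundary contributions from appending $\mid 1$ cancel all such terms.'' Without establishing these identities (or an equivalent substitute) first, both your step (b) for the odd generators and your step (c) for the $w\circ w$ and $w\circ v$ products are unsupported.
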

\begin{proof}
\noindent {\em Part 1.} We show here that the assignment in \eqref{defg} extends to a morphism of complexes of
$\HH C $-modules.
By Proposition \ref{adfu}, we have to verify that
$g v_k\in \B(\ZZ C)_{2k}(\wp(km))$  and   $g w_k\in \B(\ZZ C)_{2k+1}(\wp(km+1))$. Clearly $gv_0=[\ ] \in \B(\ZZ C)_{0}(0)$.
Assume that $g v_k\in \B(\ZZ C)_{2k}(\wp(km))$. Then, we have
$$
gw_k=[gv_k\mid 1]\in \B(\ZZ C)_{2k+1}(\wp(km)\oplus 1)=\B(\ZZ C)_{2k+1}(\wp(km+ 1)),
$$
as required. Moreover, for any $t<m+q$ and $s<m$,
$$
(m+q-t-1)_*[gw_k\mid t], (m-s-1)_*[gw_k\mid s]\in  \B(\ZZ C)_{2k+2}(\wp((k+1)m)),
$$
since
$$
(m+q-t-1)\oplus \wp(km+1)\oplus t=\wp((k+1)m)= (m-s-1)\oplus \wp(km+1)\oplus s.
$$
Whence $gv_{k+1}\in \B(\ZZ C)_{2k+2}(\wp((k+1)m))$.

\vspace{0.2cm}

\noindent {\em Part 2.} Here we  shall prove, as an auxiliary result, that
\begin{equation}\label{auxg}
gv_k\circ [1]=gw_k,\ \ gw_k\circ [1]=0,
\end{equation}
where $\circ=\circ_1$ is the shuffle product \eqref{sp1} of $\B(\ZZ C)$.
Clearly $gv_0\circ [1]=[\ ]\circ [1]=[1]=[gv_0\mid 1]=gw_0$. Assuming the result for $gv_k$, we have
$$
gw_k\circ [1]=gv_k\circ [1]\circ [1]= gv_k\circ ([1\mid 1]-[1\mid 1])=0,
$$
from where, in addition, it follows that, for any $t\in C$,
$$
[gw_k\mid t]\circ [1]=[gw_k\mid t\mid 1]-[gw_k\circ [1]\mid t]=[gw_k\mid t\mid 1],
$$
whence
\begin{align}\nonumber
gv_{k+1}\circ [1]&=\sum\limits_{t<m+q}(m+q-t-1)_*[gw_k\mid t\mid 1]-\sum\limits_{s<m}(m-s-1)_*[gw_k\mid s\mid 1]
\\ \nonumber
&=[gv_{k+1}\mid 1]=gw_{k+1}.
\end{align}

\noindent {\em Part 3.} We now prove recursively that $\partial g=g\partial$.

For argument $w_0$ is immediate: $\partial gw_0=\partial [1]=0$.
For argument $v_{k+1}$, first observe that  $\partial gw_k=0$  gives, for ant $t\in C$,
\begin{align}\nonumber
\partial[gw_k\mid t]&=\partial[gv_k\mid 1 \mid t]\overset{\eqref{fpch}}=[\partial [gv_k\mid 1]\mid t]+[gv_k\mid \partial[1\mid t]]
\\ \nonumber &=[\partial gw_k\mid t]+[gv_k\mid \partial[1\mid t]]= [gv_k\mid \partial[1\mid t]]\\ \nonumber &=
1_*[gv_k\mid t]-[gv_k\mid 1\oplus t]+t_*[gv_k\mid 1]\\ \nonumber &=
1_*[gv_k\mid t]-[gv_k\mid 1\oplus t]+t_*gw_k.
\end{align}
Then,
\begin{align}\nonumber \partial gv_{k+1}&=\sum\limits_{t<m+q}(m+q-t-1)_*\partial[gw_k\mid t]-\sum\limits_{t<m}(m-t-1)_*\partial[gw_k\mid t]
 \\
\nonumber
&=\sum\limits_{t<m+q-1}(m+q-t)_*[gv_k\mid t]-(m+q-t-1)_*[gv_k\mid 1+t]+ (m+q-1)_*gw_k\\ \nonumber
&\hspace{0.4cm}+1_*[gv_k\mid m+q-1]-[gv_k\mid m]+(m+q-1)_*gw_k\\ \nonumber
&\hspace{0.4cm} -\sum\limits_{t<m}(m-t)_*[gv_k\mid t]-(m-t-1)_*[gv_k\mid 1+ t]+(m-1)_*gw_k \\ \nonumber
&= -1_*[gv_k\mid m+q-1] +(m+q-1)\big((m+q-1)_*gw_k\big)\\ \nonumber &\hspace{0.4cm}+1_*[gv_k\mid m+q-1]-[gv_k\mid m]+(m+q-1)_*gw_k+[gv_k\mid m]-m(m-1)_*gw_k\\ \nonumber
&= (m+q)\big((m+q-1)_*gw_k\big)-m\big((m-1)_*gw_k\big) =g\partial v_{k+1}.
\end{align}

And for argument $w_{k+1}$,

\begin{align}\nonumber \partial g w_{k+1}&\overset{\eqref{auxg},\eqref{dga3}}=
\partial gv_{k+1}\circ [1]
=\Big((m+q)\big((m+q-1)_*gw_k\big)-m\big((m-1)_*(gw_k)\big)\Big)\circ [1]\overset{\eqref{auxg}}=0.
\end{align}
\noindent {\em Part 4.} Here we show that $g$ preserves products by proving
that $g(a\circ b)=ga\circ gb$ for  $a,b\in \{v_k,w_l\}$. For the case when $a=w_k$
 and $b=w_l$, we have
$$gw_k\circ gw_l\overset{\eqref{auxg}}=gv_k\circ [1]\circ gw_l\overset{\eqref{auxg}}=0=g(w_k\circ w_l).$$
To prove the remaining cases, first observe that if $gv_k\circ gv_l=g(v_k\circ v_l)$ for some $k$ and $l$, then
\begin{align*}\nonumber
gw_k\circ gv_l&=gv_k\circ [1]\circ gv_l =gv_k\circ gv_l\circ [1] =g(v_k\circ v_l)\circ [1]=
\\&=
{\scriptsize \begin{pmatrix} k+l\\ k \end{pmatrix}}gv_{k+l}\circ [1]= {\scriptsize \begin{pmatrix} k+l\\ k
\end{pmatrix}} gw_{k+l}=g(w_k\circ v_l).
\end{align*}
Next, we show that $gv_k\circ gv_l=g(v_k\circ v_l)$ by induction. The case when $k=0$ or $l=0$ is immediate, since $gv_0=[\ ]$. Now, using that, for any $t,s\in C$,
$$
[gw_k\mid t]\circ [gw_l\mid s]\overset{\eqref{forshufpro}}=[[gw_k\mid t]\circ gw_l\mid s]+
 [gw_k\circ [gw_l\mid s],t],
$$
  we have
\begin{align}\nonumber
gv_{k+1}\circ gv_{l+1}&=\sum\limits_{s<m+q}(m+q-s-1)_*\Big[\sum\limits_{t<m+q}(m+q-t-1)_*[gw_k\mid t]\circ gw_l\mid s\Big]
\\ \nonumber &\hspace{0.4cm}-\sum\limits_{s<m+q}(m+q-s-1)_*\Big[\sum\limits_{t<m}(m-t-1)_*[gw_k\mid t]\circ gw_l\mid s\Big]\\ \nonumber &\hspace{0.4cm}
+ \sum\limits_{t<m+q}(m+q-t-1)_*\Big[gw_k\circ\sum\limits_{s<m+q}(m+q-s-1)_*[gw_l\mid s]\mid t\Big]
\\  \nonumber &\hspace{0.4cm}
- \sum\limits_{t<m+q}(m+q-t-1)_*\Big[gw_k\circ\sum\limits_{s<m}(m-s-1)_*[gw_l\mid s]\mid t\Big]\\ \nonumber &\hspace{0.4cm}
- \sum\limits_{t<m}(m-t-1)_*\Big[gw_k\circ\sum\limits_{s<m+q}(m+q-s-1)_*[gw_l\mid s]\mid t\Big]\\ \nonumber &\hspace{0.4cm}
 +\sum\limits_{t<m}(m-t-1)_*\Big[gw_k\circ\sum\limits_{s<m}(m-s-1)_*[gw_l\mid s]\mid t\Big]\\
 \nonumber &\hspace{0.4cm}-\sum\limits_{s<m}(m-s-1)_*\Big[\sum\limits_{t<m+q}(m+q-t-1)_*[gw_k\mid t]\circ gw_l\mid s\Big]
\\
 \nonumber &\hspace{0.4cm}+\sum\limits_{s<m}(m-s-1)_*\Big[\sum\limits_{t<m}(m-t-1)_*[gw_k\mid t]\circ gw_l\mid s\Big],
 \end{align}
 and then, by induction,

 \begin{align}\nonumber
  gv_{k+1}&\circ gv_{l+1}=\\ \nonumber
&= \sum\limits_{s<m+q}(m+q-s-1)_*[gv_{k+1}\circ gw_l\mid s] + \sum\limits_{t<m+q}(m+q-t-1)_*[gw_k\circ gw_{l+1}\mid t]
  \\ \nonumber &\hspace{0.4cm}
 -\sum\limits_{s<m}(m-s-1)_*[gv_{k+1}\circ gw_l\mid s] - \sum\limits_{t<m}(m-t-1)_*[gw_k\circ gw_{l+1}\mid t]
  \\ \nonumber &=\hspace{0.4cm}
 {\scriptsize \begin{pmatrix}k+l+1\\k+1\end{pmatrix}}\Big(\sum\limits_{s<m+q}(m+q-s-1)_*[gw_{k+l+1}\mid s]-\sum\limits_{s<m}(m-s-1)_*[gw_{k+l+1}\mid s]\Big) \\ \nonumber &\hspace{0.4cm}+
 {\scriptsize \begin{pmatrix}k+l+1\\k\end{pmatrix}}\Big( \sum\limits_{t<m+q}(m+q-t-1)_*[gw_{k+l+1}\mid t]
 - \sum\limits_{t<m}(m-t-1)_*[gw_{k+l+1}\mid t]\Big)
  \\ \nonumber &=\hspace{0.4cm}
{\scriptsize   \begin{pmatrix}k+l+1\\k+1\end{pmatrix} } gv_{k+l+2}+{\scriptsize \begin{pmatrix}k+l+1\\k\end{pmatrix}}
  gv_{k+l+2}= {\scriptsize \begin{pmatrix}k+l+2\\k+1\end{pmatrix}} gv_{k+l+2}\\[5pt] \nonumber
  &=\hspace{0.4cm} g(v_{k+1}\circ v_{l+1}).
\end{align}
\end{proof}
Now, we are ready to establish the following key result.
\begin{theorem}\label{amtheo1}
 The morphisms $f:\B(\ZZ C)\to \mathcal{R}$ and $g:\mathcal{R}\to\B(\ZZ C)$, as defined above, form a contraction.
\end{theorem}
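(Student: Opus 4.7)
The plan is to verify the three defining conditions for a contraction: $fg=\mathrm{id}_\mathcal{R}$, the existence of a chain homotopy $\Phi:gf\Rightarrow \mathrm{id}_{\B(\ZZ C)}$, and the side relations $\Phi g=0$, $f\Phi=0$, $\Phi\Phi=0$. The first identity $fg=\mathrm{id}_\mathcal{R}$ is checked by induction on the generators of $\mathcal{R}$, using the multiplicativity of $f$ and $g$ and the recursive formulas \eqref{deff}--\eqref{defg}. The base cases are immediate: $fgv_0 = f[\ ] = v_0$ and $fgw_0 = f[1] = 1\cdot(0_* w_0) = w_0$. For the step, assuming $fgv_k=v_k$, one has $fgw_k = f[gv_k\mid 1] = fgv_k \bullet f[1] = v_k \bullet w_0 = w_k$ by the product rule for $f$, and then $fgv_{k+1}$ is computed by applying $f$ termwise to the right-hand side of the defining formula for $gv_{k+1}$; the two sums collapse to $v_{k+1}$ because, at each summand, the formula for $f$ on a 2-cell $[a\mid b]$ with $a+b\geq m+q$ produces exactly one copy of $v_1$ (multiplied by the appropriate $\HH C$-action) and these add up to recover the coefficient $1$ on $v_{k+1}$, while the summands with $a+b<m+q$ vanish.

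The second and main part is the construction of the homotopy $\Phi$. Following the strategy of Eilenberg--Mac Lane in \cite[\S 14]{E-M-II}, I would define $\Phi$ as a morphism of $\HH C$-modules of degree $+1$ cell-by-cell on the free generators of $\B(\ZZ C)$, so that by Proposition \ref{adfu} it extends uniquely. The definition proceeds recursively on the number of bars $|$: first on 0-cells and 1-cells in such a way that $\partial\Phi + \Phi\partial = \mathrm{id} - gf$ and $\Phi$ kills all elements of the form $gv_k$, $gw_k$; then on higher cells using a formula of the form $\Phi[a\mid b] = [\Phi a\mid b] + (-1)^{\deg a}[gfa\mid \Phi b]$ combined with correction terms that ensure normalization. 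This mirrors Mac Lane's ``shuffle-splitting'' formula and guarantees by construction that $\Phi g=0$ (since the image of $g$ is built from the preferred generators on which $\Phi$ vanishes) and $\Phi\Phi=0$ (since $\Phi$ of any cell already in the image of $\Phi$ is zero by the recursion).

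The condition $f\Phi=0$ will then follow from the fact that $f$, applied to any cell in the recursive image of $\Phi$, lands in a summand of $\mathcal{R}$ already accounted for; verifying this is the technical heart of the argument and reduces to a finite collection of identities involving the index map $\wp$ and the cocycle $s(x,y)$, in particular the relation \eqref{sisco}. The homotopy identity $\partial\Phi+\Phi\partial=\mathrm{id}-gf$ is then checked by induction on the number of bars, with base cases handled by direct computation using \eqref{difcep} and the explicit value of $gf$ on short cells.

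The main obstacle will be organizing the recursive formula for $\Phi$ so that all three side conditions hold simultaneously, particularly on cells of the form $[x_1\mid x_2]$ with $x_1+x_2 \geq m+q$, where the formula for $gf$ genuinely deforms the cell and $\Phi$ must produce an explicit primitive. Once these initial cases are settled, the passage to higher cells is formal, via the shuffle-product structure and the fact that $\B(\ZZ C)$ is generated as an algebra by 1- and 2-cells. As the author notes, the generalisation from cyclic groups to cyclic monoids introduces bookkeeping (the index $m$ produces extra terms involving $(m-1)_* w_k$), but no new conceptual difficulty beyond tracking these index-dependent signs and coefficients.
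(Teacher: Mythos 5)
Your Part 1 (that $fg=\mathrm{id}_{\mathcal{R}}$) matches the paper's argument and is essentially complete: the induction on $k$ via multiplicativity is exactly what the paper does, the only point worth making precise being that in the sum defining $gv_{k+1}$ a \emph{single} term survives under $f$ (namely $t=m+q-1$, since $f[1\mid t]=0$ for $1+t<m+q$ and every $s<m$ satisfies $1+s<m+q$), yielding $v_k\bullet v_1=v_{k+1}$. The problem is with the homotopy. You never actually construct $\Phi$: you propose to ``define $\Phi$ \ldots{} in such a way that $\partial\Phi+\Phi\partial=\mathrm{id}-gf$,'' which is circular, since producing a $\Phi$ with that property is precisely the content of the theorem. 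The two concrete ingredients that make the paper's proof work are missing from your plan. First, the explicit value on $1$-cells, $\Phi[x]=\sum_{t<x}(x-t-1)_*[1\mid t]$ (formula \eqref{defphi}), whose boundary must be shown to equal $-[x]+gf[x]$. Second, a closed-form expression for $gf[x\mid y]$ when $x+y\geq m+q$ (the paper's \eqref{forgf}), obtained by the re-indexing manipulations with $\wp$ and $s(x,y)$ using \eqref{sisco}; without it the identity $(\partial\Phi+\Phi\partial)[x\mid y]=-[x\mid y]+gf[x\mid y]$, which you correctly identify as the hard case, cannot be checked. You flag this as ``the main obstacle'' but do not overcome it.

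There are also two structural inaccuracies in the plan. The recursion you propose, $\Phi[a\mid b]=[\Phi a\mid b]+(-1)^{\deg a}[gfa\mid \Phi b]$ with $a$ a single letter, is not the right shape: the paper's recursion is $\Phi[x\mid y\mid\sigma]=[\Phi[x]\mid y\mid\sigma]+[gf[x\mid y]\mid\Phi[\sigma]]$, which splits off the first \emph{two} letters in the second term because $gf$ is only well-behaved on pairs (it satisfies $gf[x\mid y\mid\sigma]=[gf[x\mid y]\mid gf[\sigma]]$, formula \eqref{forgf2}, whereas $gf[x]$ is just a multiple of $[1]$ and does not feed a usable induction). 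Relatedly, $\B(\ZZ C)$ is not generated as an algebra by $1$- and $2$-cells --- shuffle products of cells are alternating sums, so individual cells are only \emph{module} generators --- and $\Phi$ is extended by Proposition \ref{adfu} as a morphism of $\HH C$-modules cell by cell, not through the algebra structure. Finally, you locate the ``technical heart'' in verifying $f\Phi=0$; in fact all three side conditions \eqref{contr2} are routine once \eqref{defphi} is written down (e.g.\ $f\Phi[x]=0$ because every $t<x\leq m+q-1$ gives $f[1\mid t]=0$), and the genuine work is the $2$-cell homotopy identity. So the architecture of your proof is the same as the paper's, but the proof itself is not there: the explicit $\Phi$, the explicit $gf$ on $2$-cells, and the correctly organized recursion all still need to be supplied.
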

\begin{proof}
{\em Part 1.} We start by showing that the  composite $fg$ is the identity. Clearly $fgv_0=f[\ ]=v_0$. Then, induction gives
\begin{equation*}
 \begin{split}
 fg w_k&\overset{\eqref{auxg}}=f(gv_k\circ [1])=fgv_k\circ f[1]=v_k\circ w_0=w_k,\\
  fgv_{k+1}&= \sum_{t<m+q}(m+q-t-1)_*f[gv_k|1\mid t]-\sum_{s<m}(m-s-1)_*f[gv_k|1|s]
   \\
  &=
  \sum_{t<m+q}(m+q-t-1)_*(f[gv_k]\bullet f[1\mid t])-\sum_{s<m}(m-s-1)_*(f[gv_k]\bullet f[1|s])
  \\
  &=
  v_k\bullet f[1|m+q-1]=v_k\bullet v_1=v_{k+1}.
 \end{split}
\end{equation*}

\noindent {\em Part 2.} Here, we describe the composite $gf$. Clearly $gf[\ ]=[\ ]$ and
  $gf[x]=x((x-1)_*[1])$. For those 2-cells $[x\mid y]$ such that $x+y<m+q$ we have  $gf[x\mid y]=0$, and, as we prove below,
  the effect of $gf$ on the 2-cells $[x\mid y]$ with $x+y\geq m+q$ is described by the formula
  \begin{align} \label{forgf}
gf[x\mid y]&=
 \sum_{t=x+y-m-q}^{m+q-1}(x+y-t-1)_*[1\mid t]+
\sum_{t=0}^{r-1}(m+r-t-1)_*[1\mid t]\\ \nonumber &\hspace{0.4cm}
-\sum_{t=0}^{m-1}(m+r-t-1))_*[1\mid t]+ \sum_{i=1}^{s(x,y)-1}
\sum_{t=(i-1)q+r}^{iq+r-1} (m+iq+r-t-1)_*[1\mid t]\\ \nonumber&\hspace{0.4cm}
+
\sum_{i=1}^{s(x,y)-1}\sum_{t=m}^{m+q-1} (m+iq+r-t-1)_*[1\mid t],
\end{align}
where we write $x+y=m+s(x,y)q+r$ with $0\leq r<q$ (so that $x\oplus y=m+r$). Concerning the two last terms, note that
$(s(x,y)-1)q+r<m+q$ whenever $s(x,y)\geq 2$, since  $m+s(x,y)q+r=x+y<2m+2q$.

In effect, by definition of $f$ and $g$,  we have
$$
gf[x\mid y]=\sum_{i=0}^{s(x,y)-1}\Big(\sum_{t=0}^{m+q-1} \wp(m+(i+1)q+r-t-1)_*[1\mid t] -
\sum_{t=0}^{m-1} \wp(m+iq+r-t-1)_*[1\mid t] \Big).
$$
Then, since for any $i\geq 1$ and $t<r$ is $\wp\big(m+(i+1)q+r-t-1\big)=\wp(m+iq+r-t-1)$, we see that
\begin{align*}
gf[x\mid y]&=\sum_{t=r}^{m+q-1}(m+q+r-t-1)_*[1\mid t]+\sum_{t=0}^{r-1}(m+r-t-1)_*[1\mid t]-\sum_{t=0}^{m-1}(m+r-t-1)_*[1\mid t]\\ &+
\sum_{i=1}^{s(x,y)-1}\Big(\sum_{t=r}^{m+q-1} \wp(m+(i+1)q+r-t-1)_*[1\mid t] -
\sum_{t=r}^{m-1} \wp(m+iq+r-t-1)_*[1\mid t] \Big)\\ &=
\sum_{t=0}^{r-1}(m+r-t-1)_*[1\mid t]-\sum_{t=0}^{m-1}(m+r-t-1)_*[1\mid t]
\\ &+
\sum_{i=0}^{s(x,y)-1}\sum_{t=r}^{m+q-1}\wp(m+(i+1)q+r-t-1)_*[1\mid t] -
\sum_{i=1}^{s(x,y)-1}\sum_{t=r}^{m-1} \wp(m+iq+r-t-1)_*[1\mid t],
\end{align*}
from where \eqref{forgf} follows thanks to the equalities
\begin{align*}
\sum_{t=r}^{m+q-1}\wp(m+(i+1)q+r-t-1)_*[1\mid t]=&
\sum_{l=0}^{i-1}\sum_{t=lp+r}^{(l+1)q+r-1}(m+(l+1)q+r-t-1)_*[1\mid t]\\&+\sum_{t=iq+r}^{m+q-1}(m+(i+1)q+r-t-1)_*[1\mid t],
\\
\sum_{t=r}^{m-1}\wp(m+iq+r-t-1)_*[1\mid t]=&
\sum_{l=1}^{i-1}\sum_{t=(l-1)q+r}^{lq+r-1}(m+lq+r-t-1)_*[1\mid t]\\&+\sum_{t=(i-1)q+r}^{m-1}(m+iq+r-t-1)_*[1\mid t].
\end{align*}

Finally,  to complete the description of the composite $gf$, for generic cells $[x\mid y\mid \sigma]$ of dimensions greater than 2 we have the formula
 \begin{equation}\label{forgf2}
 gf[x\mid y\mid \sigma]=[gf[x,y]\mid gf[\sigma]].
 \end{equation}
 In effect, as $gf[x\mid y\mid \sigma]=g(f[x,y]\bullet f[\sigma])$, by linearity, it suffices to observe that, for any $k\geq 1$,
$$
\begin{array}{cc}g(v_1\bullet w_k)=[gv_1\mid gw_k],& g(v_{1}\bullet v_k)=[gv_1\mid gv_k],\end{array}
$$
or, equivalently, that  $gw_{k+1}=[gv_1\mid gw_k]$ and $gv_{k+1}=[gv_1\mid gv_k]$.
But these last equations are immediate for $k=1$, and for higher $k$ by a straightforward induction.
\begin{comment}
\begin{equation*}
 \begin{split}
 gv_{k+1}&=\sum_{t<m+q}(m+q-1)_*[gw_k\mid t]-\sum_{s<m} (m-s-1)_*[gw_k\mid s]
  \\
  &=
  \sum_{t<m+q}(m+q-1)_*[gv_1\mid gw_{k-1}\mid t]-\sum_{s<m} (m-s-1)_*[gv_1\mid gw_{k-1}\mid s]=[gv_1\mid gv_k],\\
   gw_{k+1}&=[gv_{k+1}\mid 1]=[gv_1\mid gv_k\mid 1]=[gv_1\mid gw_k].
  \end{split}
\end{equation*}
\end{comment}

\noindent {\em Part 3.} We establish here a homotopy $\Phi$ from $gf$ to the identity, which is determined by the recursive formulas
\begin{equation}\label{defphi}\left\{
 \begin{array}{l}
 \Phi[\ ]=0,\\[4pt]
  \Phi[x]=\sum\limits_{t<x}(x-t-1)_*[1\mid t],
   \\
  \Phi[x\mid y\mid \sigma]=[\Phi[x]\mid y\mid \sigma]+[gf[x\mid y]\mid \Phi[\sigma]].
 \end{array}\right.
\end{equation}
Since, for any $t<x$ in $C$, $(x-t-1)\oplus 1\oplus t=x$, we see that $\pi\Phi[x]=x$ and then, by recursion, that $\pi\Phi[x\mid y\mid \sigma]=x\oplus y\oplus \pi[\sigma]$. Hence, by Proposition \ref{adfu}, the formulas above determine an endomorphism of the complex of $\HH C$-modules $\B(\ZZ C)$, which is of differential degree $+1$.

Next, we prove that $\Phi:gf\Rightarrow id$ is actually a homotopy:

\vspace{0.2cm}
\underline{For a $1$-cell} $[x]$ is $\Phi\partial[x]=0$, and
\begin{align}\nonumber
\partial \Phi[x]&= \sum_{t<x}(x-t)_*[t]-(x-t-1)_*[1+t]+(x-1)_*[1]
=-[x]+x((x-1)_*[1])=-[x]+gf[x],
\end{align}
as required.

\vspace{0.2cm}
\underline{For a $2-$cell} $[x\mid y]$ we have
\begin{equation*}
  \begin{split}
   &(\partial\Phi+\Phi \partial)[x\mid y]=
   \sum_{t<x} (x-t-1)_*(1_*[t\mid y]-[1+ t,y] +[1\mid t\oplus y]-y_*[1\mid t])
   \\
  &\hspace{0.8cm}
  +\sum_{t<y}(x\oplus(y-t-1))_*[1\mid t]-\sum_{t<x\oplus y}((x\oplus y)-t-1)_*[1\mid t]
  +\sum_{t<x}((x-t-1)\oplus y)_*[1\mid t]
  \\
  &
  = \sum_{t<x}(x-t)_*[t\mid y]-(x-t-1)_*[1+t\mid y] +\sum_{t<x}(x-t-1)_*[1\mid t\oplus y]
  -\sum_{t<x} ((x-t-1)\oplus y)_*[1\mid t]
  \\
  &\hspace{0.8cm}
   +\sum_{t<y}(x\oplus(y-t-1))_*[1\mid t]-\sum_{t<x\oplus y}((x\oplus y)-t-1)_*[1\mid t]
  +\sum_{t<x}((x-t-1)\oplus y)_*[1\mid t]
  \\
  &=
  -[x,y] + \sum_{t<x}(x-t-1)_*[1\mid t\oplus y] +
   \sum_{t<y}(x\oplus(y-t-1))_*[1\mid t]
    -\sum_{t<x\oplus y}((x\oplus y)-t-1)_*[1\mid t].
  \end{split}
 \end{equation*}

\underline{If $s(x,y)=0$} then, for any $t<x$,  $t\oplus y=t+y$ and $x\oplus(y-t-1)=x+y-t-1=(x\oplus y)-t-1$. Therefore
$$\sum_{t<x}(x-t-1)_*[1\mid t+ y] +
   \sum_{t<y}(x+y-t-1)_*[1\mid t]
    -\sum_{t<x+ y}(x+ y-t-1)_*[1\mid t]=0,
$$
and, since $gf[x\mid y]=0$, it follows that $(\partial\Phi+\Phi \partial)[x\mid y]=-[x\mid y]+gf[x\mid y]$, as required.

\vspace{0.2cm}
\underline{If $s(x,y)>0$},
 the composite $gf[x\mid y]$ has been computed in \eqref{forgf} and,
 writing as there $x+y=m+s(x,y)q+r$ with $0\leq r<q$, we have
\begin{align*}
\sum\limits_{t<x}(x-t-1)_*&[1\mid t\oplus y]=\sum_{l=1}^{s(x,y)-1}\underset{m+lq\leq t+y<m+(l+1)q}{\sum_{t<x}}\hspace{-0.8cm}(x-t-1)_*[1\mid t+y-lq]\\
&\hspace{0.4cm} + \underset{t+y<m+q}{\sum_{t<x}}(x-t-1)_*[1\mid t+y]
+\underset{m+s(x,y)q\leq t+y}{\sum_{t<x}}\hspace{-0.8cm}(x-t-1)_*[1\mid t+y-s(x,y)q].
\end{align*}
Now, making the changes $u=t+y-lq$, $u=t+y$, and $u=t+y-s(x,y)q$ in the respective terms, and then renaming the $u$ again by $t$,   we obtain
\begin{equation*}
 \begin{split}
\sum\limits_{t<x}(x-t-1)_*[1\mid t\oplus y]&=
\sum_{i=1}^{s(x,y)-1}\sum_{t=m}^{m+q-1}(m+iq+r-t-1)_*[1\mid t]
\\
&\hspace{0.4cm}+ \sum_{t=y}^{m+q-1} (x+y-t-1)_*[1\mid t]+
\sum_{t=m}^{m+r-1}(m+r-t-1)_*[1\mid t].
 \end{split}
\end{equation*}

Similarly, we have
\begin{equation*}
 \begin{split}
  \sum_{t<y}(x\oplus(y-t-1))_*[1\mid t]= &
   \sum_{i=1}^{s(x,y)-1} \sum\limits_{t=(i-1)q+r}^{iq+r-1}(m+iq+r-t-1)_*[1\mid t]
  \\
  & +\sum\limits_{t=x+y-m-q}^{y-1} (x+y-t-1)_*[1\mid t]
 +\sum_{t=0}^{r-1} (m+r-t-1)_*[1\mid t],
 \end{split}
\end{equation*}
and $$
\sum\limits_{t<x\oplus y}((x\oplus y)-t-1)_*[1\mid t]=\sum\limits_{t=0}^{m+r-1}(m+r-t-1)_*[1\mid t].$$ Hence, a direct comparison with \eqref{forgf} gives that $(\partial\Phi+\Phi \partial)[x\mid y]=-[x\mid y]+gf[x\mid y]$, as required.

\vspace{0.2cm} Finally, we prove that $(\partial\Phi+\Phi\partial)(\tau)=-\tau+gf(\tau)$ if \underline{$\tau$ is a cell of dimension $3$ or greater}.  To do so, previously observe  that, for any generic cell $\gamma$ of $\B(\ZZ C)$, we have
\begin{equation}\label{aufpargf}
\partial[gf[x\mid y]\mid \Phi(\gamma)]=[gf[x\mid y]\mid \partial \Phi(\gamma)].
\end{equation}
 To prove it, by linearity,  it suffices to check that $ \partial[gv_1\mid 1\mid  \beta]=[gv_1\mid \partial[1\mid \beta ]]$, for any generic cell $\beta$:
\begin{equation*}
 \begin{split}
  \partial[gv_1\mid 1\mid  \beta ]& \overset{\eqref{fpch}}=[\partial[gv_1\mid 1]\mid  \beta ] +[gv_1\mid \partial[1\mid \beta ]]
  \\ &
  \overset{\eqref{defg}}= [\partial gw_1\mid \beta]  +[gv_1\mid \partial[1\mid \beta ]] =[g\partial w_1\mid \beta ]  +[gv_1\mid \partial[1\mid \beta ]]
  = [gv_1\mid \partial[1\mid \beta ]].
 \end{split}
\end{equation*}

Now, according to the definition in \eqref{defphi}, on chains $c$ of $\B(\ZZ C)$ of dimensions 2 or greater, we can write $\Phi(c)=\Phi_1(c)+\Phi_2(c)$, where $\Phi_1$ and $\Phi_2$ are the morphisms of $\HH C $-modules given on generic cells by $\Phi_1[x\mid y\mid \sigma]=[\Phi[x]\mid y\mid \sigma]$ and $\Phi_2[x\mid y\mid \sigma]=[gf[x\mid y]\mid \Phi(\sigma)]$. Then, for the generic cell $\tau=[x\mid y\mid z\mid \rho]$, as
$$\partial\tau=[\partial[x\mid y]\mid z\mid \rho]-[x\mid \partial[y\mid z\mid \rho]]=
[\partial[x\mid y\mid z]\mid \rho]+[x\mid y\mid \partial[z\mid \rho]],$$
we have
\begin{align*}
\Phi\partial(\tau)&=\Phi_1[\partial[x\mid y]\mid z\mid \rho]-\Phi_1[x\mid \partial[y\mid z\mid \rho]]+ \Phi_2[\partial[x\mid y\mid z]\mid \rho]+\Phi_2[x\mid y\mid \partial[z\mid \rho]]\\ &= [\Phi\partial[x\mid y]\mid z\mid \rho]-[\Phi[x]\mid \partial[y\mid z\mid \rho]]
+[gf\partial[x\mid y\mid z]\mid \Phi[\rho]]+[gf[x\mid y]\mid \Phi\partial[z\mid \rho]]\\&=
[\Phi\partial[x\mid y]\mid z\mid \rho]-[\Phi[x]\mid \partial[y\mid z\mid \rho]]
+[gf[x\mid y]\mid \Phi\partial[z\mid \rho]],
\end{align*}
since $f\partial[x\mid y\mid z]=0$ by \eqref{f3-cocy}. Furthermore, by using \eqref{fpch} and \eqref{aufpargf}, we have
\begin{align*}
\partial\Phi(\tau)&= \partial[\Phi[x]\mid y\mid z\mid \rho]+\partial[gf[x\mid y]\mid \Phi[z\mid \rho]]\\&= [\partial\Phi[x\mid y]\mid z\mid \rho]+[\Phi[x]\mid \partial[y\mid z\mid \rho]]+[gf[x\mid y]\mid \partial\Phi[z\mid \rho]],
\end{align*}
whence, by the already proven above and induction on the dimension of $\rho$, we get
\begin{align*}
(\partial\Phi+\Phi\partial)(\tau)&=[\partial\Phi[x\mid y]\mid z\mid \rho]+[gf[x\mid y]\mid \partial\Phi[z\mid \rho]]+[\Phi\partial[x\mid y]\mid z\mid \rho]+[gf[x\mid y]\mid \Phi\partial[z\mid \rho]]\\&=[(\partial\Phi+\Phi\partial)[x\mid y]\mid z\mid \rho]+
[gf[x\mid y]\mid (\partial\Phi+\Phi\partial)[z\mid \rho]]&\\&=
[-[x\mid y]+gf[x\mid y]\mid z\mid \rho]+[gf[x\mid y]\mid -[z\mid \rho]+gf[z\mid \rho]]\\&=-[x\mid y\mid z\mid \rho]+[gf[x\mid y]\mid gf[z\mid \rho]]\overset{\eqref{forgf2}}=-\tau+gf(\tau),
\end{align*}
as required.

This completes the proof of Theorem \ref{amtheo1}, since the conditions in \eqref{contr2} are
easily verified.
\end{proof}

If $\A$ is any $\HH C $-module, by Proposition \ref{h1l},
the first level cohomology groups
$H^n(C,1;\A)$ are precisely Leech cohomology groups $H^n_{^{_\mathrm{L}}}(C,\A)$. Hence, by Theorem \ref{amtheo1}, these can be computed as $H^n_{^{_\mathrm{L}}}(C,\A)= H^n\text{Hom}_{\HH C}(\mathcal{R},\A)$. Since, by Proposition \ref{adfu}, there are natural isomorphisms
$$
\text{Hom}_{\HH C}(\mathcal{R}_{2k},\A)\cong \A(\wp(km)), \hspace{0.3cm}
\text{Hom}_{\HH C}(\mathcal{R}_{2k+1},\A)\cong \A(\wp(km+1)).
$$
we obtain the following already known result (see \cite[Theorem 5.1]{c-c-2} for a general result computing Leech cohomology groups for finite cyclic monoids).
\begin{proposition}[\cite{c-c-2}, Corollary 5.6] \label{nl11}
Let $C=C_{m,q}$ be the cyclic monoid of index $m$ and period $q$. Then, for any $\HH C$-module $\A$ and any integer $k\geq 0$, there is a natural exact sequence of abelian groups
\begin{equation*}
0\to H^{2k+1}_{^{_\mathrm{L}}}(C,\A)\longrightarrow \A(\wp(km+1))\overset{\partial}\longrightarrow \A(\wp(km+m))
\longrightarrow H^{2k+2}_{^{_\mathrm{L}}}(C,\A)\to 0,
\end{equation*}
where $\partial$ is given by
$
\partial(a) = (m+q)\big((m+q-1)_*a\big)-m\big((m-1)_*a\big)
$.
\end{proposition}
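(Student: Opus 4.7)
The plan is to transport the Leech cohomology computation from the Bar construction to the small complex $\mathcal{R}$. First, Proposition~\ref{h1l} identifies
$$H^n_{^{_\mathrm{L}}}(C,\A)\cong H^n(C,1;\A)=H^n\bigl(\mathrm{Hom}_{\HH C}(\B(\ZZ C),\A)\bigr).$$
The contraction of commutative DGA-algebras $(f,g,\Phi)$ between $\B(\ZZ C)$ and $\mathcal{R}$ established in Theorem~\ref{amtheo1} is preserved by the additive functor $\mathrm{Hom}_{\HH C}(-,\A)$: the induced cochain maps $f^*$ and $g^*$ satisfy $g^*f^*=\mathrm{id}$ and $\Phi^*$ provides a cochain homotopy $f^*g^*\simeq \mathrm{id}$, as a direct consequence of the identities in \eqref{contr2}. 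Hence one obtains natural isomorphisms $H^n_{^{_\mathrm{L}}}(C,\A)\cong H^n\bigl(\mathrm{Hom}_{\HH C}(\mathcal{R},\A)\bigr)$.

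Next I will describe the cochain complex $\mathrm{Hom}_{\HH C}(\mathcal{R},\A)$ explicitly. Since, by \eqref{a'2}, $\mathcal{R}_{2k}$ and $\mathcal{R}_{2k+1}$ are free $\HH C$-modules on the singleton sets over $C$ given by $\{v_k\}$ (with $\pi v_k=\wp(km)$) and $\{w_k\}$ (with $\pi w_k=\wp(km+1)$), Proposition~\ref{adfu} yields natural isomorphisms
$$\mathrm{Hom}_{\HH C}(\mathcal{R}_{2k},\A)\cong \A(\wp(km)),\qquad \mathrm{Hom}_{\HH C}(\mathcal{R}_{2k+1},\A)\cong \A(\wp(km+1)),$$
sending a morphism $\varphi$ to $\varphi(v_k)$ or $\varphi(w_k)$ respectively.

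Under these identifications, the coboundary maps of $\mathrm{Hom}_{\HH C}(\mathcal{R},\A)$ are dictated by \eqref{difcep}. The relation $\partial w_k=0$ forces the coboundary $\A(\wp(km))\to\A(\wp(km+1))$ to vanish for every $k\geq 0$, while $\partial v_{k+1}=(m+q)\bigl((m+q-1)_*w_k\bigr)-m\bigl((m-1)_*w_k\bigr)$ translates, under the duality, into the homomorphism $\partial\colon\A(\wp(km+1))\to\A(\wp(km+m))$ defined by $a\mapsto (m+q)\bigl((m+q-1)_*a\bigr)-m\bigl((m-1)_*a\bigr)$.

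Consequently, the cochain complex $\mathrm{Hom}_{\HH C}(\mathcal{R},\A)$ consists of consecutive two-term pieces $\A(\wp(km+1))\xrightarrow{\partial}\A(\wp(km+m))$ linked by zero maps (with $\A(e)$ sitting in degree zero). Hence $H^{2k+1}_{^{_\mathrm{L}}}(C,\A)$ is the kernel of $\partial$ and $H^{2k+2}_{^{_\mathrm{L}}}(C,\A)$ is its cokernel, which is precisely the four-term exact sequence asserted. I foresee no substantive obstacle; the only step deserving attention is the transport of the chain contraction to a cochain-homotopy equivalence after applying $\mathrm{Hom}_{\HH C}(-,\A)$, but this is routine since that functor is additive.
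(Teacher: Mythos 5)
Your proposal is correct and follows the same route as the paper: identify $H^n_{^{_\mathrm{L}}}(C,\A)$ with $H^n\bigl(\mathrm{Hom}_{\HH C}(\B(\ZZ C),\A)\bigr)$ via Proposition~\ref{h1l}, transport along the contraction of Theorem~\ref{amtheo1}, and then read off the two-term pieces of $\mathrm{Hom}_{\HH C}(\mathcal{R},\A)$ using the freeness of $\mathcal{R}_{2k}$ and $\mathcal{R}_{2k+1}$ and Proposition~\ref{adfu}. Your dualization of the differentials (zero maps in odd-to-even degrees, and $a\mapsto (m+q)\bigl((m+q-1)_*a\bigr)-m\bigl((m-1)_*a\bigr)$ in the other degrees) matches the paper exactly.
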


Thus, for instance, if $A$ is any abelian group, regarded as a constant $\HH C$-module, then the homomorphism $\partial:A\to A$ is multiplication by $q$, that is,  $\partial(a)=q\,a$. Therefore, for all $k\geq 0$,
$$
\begin{array}{clc}
H^{2k+1}_{^{_\mathrm{L}}}(C,A)&\cong&\mathrm{Ker}(q:A\to A),\\[5pt]
H^{2k+2}_{^{_\mathrm{L}}}(C,A)&\cong&\mathrm{Coker}(q:A\to A).
\end{array}
$$

We consider now the $r$th level cohomology groups of $C=C_{m,q}$  with $r\geq 2$.
By Theorem \ref{amtheo1} and an iterated use of Lemma \ref{iterate} we conclude that  the complexes of $\HH C$-modules $\B^r(\ZZ C)$ and $\B^{r-1}(\mathcal{R})$ are homotopy equivalent.  Therefore, for any $\HH C$-module $\A$, there are natural isomorphisms
$$
H^n(C,r,\A)\cong H^n\big(\text{Hom}_{\HH C }(\B^{r-1}(\mathcal{R}),\A)\big).
$$
An analysis of the complexes $\B^{r-1}(\mathcal{R})$ tell us that  $\B^{r-1}(\mathcal{R})_n=0$ for $0<n<r$, and that we have the diagram of  suspensions
\begin{equation}\nonumber
 \xymatrix@C=16pt{  &\mathcal{R}_4\ar[r]\ar@{^(->}[d]_{\s}& \mathcal{R}_3
 \ar[r]\ar@{^(->}[d]_{\s}&\mathcal{R}_2\ar[r]\ar@{=}[d]_{\s}& \mathcal{R}_1
 \ar@{=}[d]_{\s}\ar[r]& 0\\ &
 \B(\mathcal{R})_5\ar[r]\ar@{^(->}[d]_{\s}& \B(\mathcal{R})_4\ar[r]\ar@{=}[d]_{\s}&
 \B(\mathcal{R})_3\ar[r]\ar@{=}[d]_{\s}&\B(\mathcal{R})_2\ar[r]\ar@{=}[d]_{\s}&0 \\ &
  \B^2(\mathcal{R})_6\ar[r]& \B^2(\mathcal{R})_5\ar[r]&
  \B^2(\mathcal{R})_4\ar[r]&\B^2(\mathcal{R})_3\ar[r]&0 }
\end{equation}
where

\vspace{0.2cm}\begin{quote}
 $\bullet$ {\em $\B(\mathcal{R})_4$ is the free $\HH C $-module on the binary set consisting of the suspension of the $3$-cell $w_1$ of $\mathcal{R}$ and the $4$-cell $$ [w_0|w_0]$$ with $\pi[w_0\mid w_0]=\wp(2)$,  whose differential is
$  \partial([w_0\mid w_0])=w_0\circ w_0=0$,}
\end{quote}

\vspace{0.2cm}\begin{quote}
$\bullet$ {\em $\B(\mathcal{R})_5$ is  the free $\HH C $-module on the set consisting of the suspension of the $4$-cell
 $v_2$  of $\mathcal{R}$ together the $5$-cells
$$[w_0\mid v_1], \  [v_1\mid w_0]$$
 with $\pi[w_0\mid v_1]=m\oplus 1 =\pi[v_1|w_0]$, and
whose differential is}
\begin{align*}
 \partial[w_0\mid v_1]&=w_1-(m+q)\big((m+q-1)_*[w_0\mid w_0]\big)+m\big((m-1)_*[w_0\mid w_0]\big),\\
 \partial[v_1\mid w_0]&=-w_1-(m+q)\big((m+q-1)_*[w_0\mid w_0]\big)+m\big((m-1)_*[w_0\mid w_0]\big).
\end{align*}
\end{quote}

\vspace{0.2cm}\begin{quote}
$\bullet$ {\em $\B^2(\mathcal{R})_6$ is the free $\HH C $-module on the set consisting of the double suspension of the $4$-cell
 $v_2$  of $\mathcal{R}$, the suspension
 of the $5$-cells $[w_0\mid v_1]$ and the
 $[v_1\mid w_0]$ of $\B(\mathcal{R})_5$,  and the
 $6$-cell
$$[w_0\mid\! \mid w_0]$$
with $\pi[w_0\mid \!\mid w_0]=\wp(2)$, whose differential is
$$\partial[w_0\mid \!\mid w_0]=0.
$$
}
\end{quote}

Then, by Proposition \ref{adfu}, there are natural isomorphisms
$$\begin{array}{l}
\text{Hom}_{\HH C }(\B(\mathcal{R})_{2},\A)\cong \A(1),\
\text{Hom}_{\HH C }(\B(\mathcal{R})_{4},\A)\cong \A(m\oplus1)\times \A(\wp(2)),
 \\[6pt]
 \text{Hom}_{\HH C }(\B(\mathcal{R})_{3},\A)\cong \A(m),
 \
\text{Hom}_{\HH C }(\B(\mathcal{R})_{5},\A)\cong \A(\wp(2m))\times \A(m\oplus 1)\times \A(m\oplus 1),\\[6pt]
\text{Hom}_{\HH C }(\B^2(\mathcal{R})_{6},\A)\cong \A(\wp(2m))\times \A(m\oplus 1)\times \A(m\oplus 1)\times \A(\wp (2)).
\end{array}
$$
In these terms the truncated complex $\text{Hom}_{\HH C }(\B(\mathcal{R}),\A)$ is written as
\begin{equation}\label{hombc}
0\to \A(1) \overset{\partial^1}\to \A(m)\overset{\partial^2}\to \A(m\oplus 1)\times\A(\wp(2))
\overset{\partial^3} \to \A(\wp(2m))\times \A(m\oplus 1)\times \A(m\oplus 1),
\end{equation}
where the coboundaries are given by
$$
\partial^1(a) = -(m+q)\big((m+q-1)_*a\big)+m\big((m-1)_*a\big),
$$
$\partial^2=0$ is the morphism zero, and
\begin{align*}
\partial^3(a,b)=  \Big(-& (m+q)\big((m+q-1)_*a)+m\big((m-1)_*a),\
\\
  a& -(m+q)\big((m+q-1)_*b\big)+m\big((m-1)_*b),
\\
 - & a-(m+q)\big((m+q-1)_*b)+m\big((m-1)_*b\big)\Big),
\end{align*}
while the truncated complex $\text{Hom}_{\HH C }(\B^2(\mathcal{R}),\A)$ is written as
\begin{equation}\label{hombc2}
0\to \A(1) \overset{\partial^1}\to \A(m)\overset{\partial^2}\to \A(m\oplus 1)\times\A(\wp(2))
\overset{\partial^3} \to \A(\wp(2m))\times \A(m\oplus 1)\times \A(m\oplus 1)\times \A(\wp (2)),
\end{equation}
where $\partial^1$ and $\partial^2$ are the same as above whereas
$\partial^3$ acts by
\begin{align*}
\partial^3(a,b)=  \Big(& (m+q)\big((m+q-1)_*a)-m\big((m-1)_*a),\
\\
  &-a  +(m+q)\big((m+q-1)_*b\big)-m\big((m-1)_*b),
\\
 & a+(m+q)\big((m+q-1)_*b)-m\big((m-1)_*b\big),\ 0\Big).
\end{align*}

Then, as an immediate consequence of \eqref{hombc} and  \eqref{hombc2}, we have

\begin{theorem}\label{h23}Let $C=C_{m,q}$ be the cyclic monoid of index $m$ and period $q$. Then, for any $\HH C$-module $\A$, there is a natural exact sequence of abelian groups
$$
0\to H^{2}(C,2;\A)\longrightarrow \A(1)\overset{\partial}\longrightarrow \A(m)
\longrightarrow H^{3}(C,2;\A)\to 0
$$
where $\partial(a) = (m+q)\big((m+q-1)_*a\big)-m\big((m-1)_*a\big)$, and natural isomorphisms
\begin{equation*}
H^4(C,2,\A)\cong H^{5}(C,3;\A)\cong \left\{ b\in \A(\wp(2)) \left|\begin{array}{l} (m+q)^2\wp(2m+q-2)_*b=m^2\wp(2m-2)_*b,\\[4pt]
2(m+q)(m+q-1)_*b=2m(m-1)_*b, \end{array} \right.\right\}.
\end{equation*}

\end{theorem}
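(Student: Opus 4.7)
The plan is to bootstrap from Theorem \ref{amtheo1}. Iterating Lemma \ref{iterate} on the contraction $\B(\ZZ C)\simeq \mathcal{R}$ produces, for each $r\geq 1$, a contraction $\B^{r}(\ZZ C)\simeq \B^{r-1}(\mathcal{R})$ of commutative DGA-algebras over $\HH C$. Since every contraction is a chain homotopy equivalence, applying $\mathrm{Hom}_{\HH C}(-,\A)$ yields natural isomorphisms
$$
H^n(C,r;\A)\;\cong\; H^n\!\big(\mathrm{Hom}_{\HH C}(\B^{r-1}(\mathcal{R}),\A)\big).
$$
For $r=2,3$ the paper has already enumerated the generic cells of $\B(\mathcal{R})$ and $\B^{2}(\mathcal{R})$ through the relevant dimensions together with their differentials; combined with Proposition \ref{adfu}, this produces the explicit truncated cochain complexes \eqref{hombc} and \eqref{hombc2}. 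Everything that follows is a direct reading of cohomology off these complexes.

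The exact sequence for $H^2(C,2;\A)$ and $H^3(C,2;\A)$ is immediate from \eqref{hombc} in degrees $2$ and $3$: since $\partial^2=0$ there, one has $H^2(C,2;\A)=\ker\partial^1$ and $H^3(C,2;\A)=\A(m)/\mathrm{Im}\,\partial^1$. Rewriting $\partial^1$ with the opposite overall sign (which affects neither kernel nor image) gives exactly the coboundary $\partial$ displayed in the statement.

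For $H^4(C,2;\A)\cong H^5(C,3;\A)$ I would compute $\ker\partial^3$, which is identical in both \eqref{hombc} and \eqref{hombc2}: the extra fourth component appearing in \eqref{hombc2} is identically zero since $\partial[w_0|\!|w_0]=0$, and $\partial^2$ vanishes in both complexes, so no coboundaries need to be quotiented out. Setting the three nontrivial components of $\partial^3(a,b)$ equal to zero, the second and third equations jointly force $a=(m+q)((m+q-1)_*b)-m((m-1)_*b)$ subject to $2a=0$; feeding the first expression into the constraint $2a=0$ yields precisely the displayed relation $2(m+q)((m+q-1)_*b)=2m((m-1)_*b)$. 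Substituting this value of $a$ into the first equation, and using the cyclic-monoid simplifications $\wp(2m+2q-2)=\wp(2m+q-2)$ and $\wp(2m+q)=\wp(2m)$ (both instances of $\wp(x+q)=\wp(x)$ valid for $x\geq m$), the first equation collapses to a relation inside $\A(\wp(2m))$ between $\wp(2m+q-2)_*b$ and $\wp(2m-2)_*b$, equivalent modulo the second condition to $(m+q)^2\wp(2m+q-2)_*b=m^2\wp(2m-2)_*b$.

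The main obstacle is bookkeeping rather than anything conceptual: tracking the $\wp$-reductions is what causes iterated transports such as $(m+q-1)_*(m+q-1)_*$ and $(m+q-1)_*(m-1)_*$ to collapse onto the single operator $\wp(2m+q-2)_*$, and one must recognize that the cleaned-up first condition differs from the naive one $(m^2-q^2)\wp(2m+q-2)_*b=m^2\wp(2m-2)_*b$ by exactly $2q(m+q)\wp(2m+q-2)_*b$, a quantity which vanishes as soon as one applies $(m+q-1)_*$ to the second condition.
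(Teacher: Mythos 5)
Your proposal is correct and follows exactly the paper's route: the contraction of Theorem \ref{amtheo1} together with Lemma \ref{iterate} reduces everything to the truncated complexes \eqref{hombc} and \eqref{hombc2}, from which the cohomology is read off degree by degree. The paper leaves the final cocycle computation as "immediate"; your explicit verification — including the observation that the naive first condition $(m^2-q^2)\wp(2m+q-2)_*b=m^2\wp(2m-2)_*b$ agrees with the displayed one modulo $2q(m+q)\wp(2m+q-2)_*b$, which vanishes by transporting the second condition — fills in precisely the bookkeeping the authors omit.
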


Note that in the case when the cyclic monoid is of index $m=1$, the above description of $H^{4}(C,2;\A)$ adopts the simpler form
$$
H^{4}(C,2;\A)\cong \left\{ b\in \A(\wp(2)) \left|\begin{array}{l} (q+1)^2q_*b=b,\\[4pt]
2(q+1)q_*b=2b, \end{array} \right.\right\},
$$
while when $m\geq 2$,
$$
H^{4}(C,2;\A)\cong \left\{ b\in \A(\wp(2)) \left|\begin{array}{l} (2mq+q^2)\wp(2m-2)_*b=b,\\[4pt]
2(m+q)(m+q-1)_*b=2m(m-1)_*b, \end{array} \right.\right\}.
$$

\begin{corollary} For any finite cyclic monoid $C$, any integer $r\geq 1$,  and any $\HH C$-module $\A$, there are natural isomorphisms $$H^{r+1}(C,r;\A)\cong H^2_{^{_\mathrm{L}}}(C,\A)\cong H^2_{^{_\mathrm{G}}}(C,\A).$$
\end{corollary}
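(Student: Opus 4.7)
The plan is to combine several of the results already established in the paper with a direct comparison of two exact sequences which happen to have identical connecting maps. Since $C$ is finite cyclic, everything boils down to identifying the cokernel of one specific homomorphism.

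First I would dispose of the dependence on $r$. For $r\geq 2$, Corollary \ref{c58} gives $H^{r+1}(C,r;\A)\cong H^3(C,2;\A)$, so it suffices to prove
$$H^3(C,2;\A)\cong H^2_{^{_\mathrm{L}}}(C,\A)\cong H^2_{^{_\mathrm{G}}}(C,\A).$$
For the boundary case $r=1$, we have $H^{r+1}(C,r;\A)=H^2(C,1;\A)$, which is isomorphic to $H^2_{^{_\mathrm{L}}}(C,\A)$ by Proposition \ref{h1l}; the remaining identification with $H^2_{^{_\mathrm{G}}}(C,\A)$ then follows from the same chain of isomorphisms we establish for $r\geq 2$.

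Second, the isomorphism $H^3(C,2;\A)\cong H^2_{^{_\mathrm{G}}}(C,\A)$ is already available in full generality: it is exactly the middle statement of Proposition \ref{comgc}. So the only genuine content left to verify is
$$H^3(C,2;\A)\cong H^2_{^{_\mathrm{L}}}(C,\A).$$
To establish this, I would place the two known four-term exact sequences side by side. On the one hand, Theorem \ref{h23} provides the exact sequence
$$0\to H^{2}(C,2;\A)\to \A(1)\overset{\partial}\to \A(m)\to H^{3}(C,2;\A)\to 0,$$
with $\partial(a)=(m+q)\bigl((m+q-1)_*a\bigr)-m\bigl((m-1)_*a\bigr)$. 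On the other hand, Proposition \ref{nl11} specialised to $k=0$ yields
$$0\to H^{1}_{^{_\mathrm{L}}}(C,\A)\to \A(1)\overset{\partial}\to \A(m)\to H^{2}_{^{_\mathrm{L}}}(C,\A)\to 0,$$
with \emph{literally the same} homomorphism $\partial$. Identifying the two sequences term by term gives both $H^{2}(C,2;\A)\cong H^{1}_{^{_\mathrm{L}}}(C,\A)$ and the desired $H^{3}(C,2;\A)\cong H^{2}_{^{_\mathrm{L}}}(C,\A)$.

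The only point requiring any care is the naturality and the compatibility of the identifications with the previously constructed comparison maps: one should check that the abstract isomorphism extracted from the two cokernel descriptions coincides with the natural monomorphism $H^3(C,2;\A)\hookrightarrow H^2(C,1;\A)\cong H^2_{^{_\mathrm{L}}}(C,\A)$ of Corollary \ref{c58}. This is essentially formal, since both identifications are induced by the contraction $f,g$ of Theorem \ref{amtheo1} between $\B(\ZZ C)$ and $\mathcal{R}$ and so come from the same underlying chain-level construction. Granting this bookkeeping, the chain of natural isomorphisms
$$H^{r+1}(C,r;\A)\cong H^3(C,2;\A)\cong H^2_{^{_\mathrm{L}}}(C,\A)\cong H^2_{^{_\mathrm{G}}}(C,\A)$$
follows, and I expect the verification of this naturality to be the main (but minor) obstacle in writing out the proof in full.
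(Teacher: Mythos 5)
Your proposal is correct and follows essentially the same route as the paper: the paper's proof likewise compares the exact sequence of Theorem \ref{h23} with that of Proposition \ref{nl11} at $k=0$ (noting the identical coboundary $\partial$) to get $H^3(C,2;\A)\cong H^2_{^{_\mathrm{L}}}(C,\A)$, and then invokes Proposition \ref{comgc} and Corollary \ref{c58}. Your extra remarks on the $r=1$ case and on checking compatibility of the identifications are reasonable refinements but do not change the argument.
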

\begin{proof}
A direct comparison of the exact sequence in Theorem \ref{h23} with the sequence in Proposition \ref{nl11}, for the case when $k=0$, gives $H^3(C,2;\A)\cong H^2_{^{_\mathrm{L}}}(C,\A)$. Then, the result follows since $H^3(C,2;\A)\cong H^2_{^{_\mathrm{G}}}(C,\A)$ by Proposition \ref{comgc}, and $H^{r+1}(C,r;\A)\cong H^3(C,2;\A)$ by Corollary \ref{c58}.
\end{proof}

\begin{corollary}
For any finite cyclic monoid $C$, any integer $r\geq 2$,  and any $\HH C$-module $\A$, there are natural isomorphisms
$$
H^{r+2}(C,r;\A)\cong H^3_{^{_\mathrm{C}}}(C,\A).
$$
\end{corollary}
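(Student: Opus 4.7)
The proof is essentially a chaining of isomorphisms already established in the paper, with Theorem~\ref{h23} doing the essential computational work. My plan is as follows.

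First I would split into the cases $r=2$ and $r\geq 3$. For $r=2$, the claim $H^{4}(C,2;\A)\cong H^{3}_{^{_\mathrm{C}}}(C,\A)$ is an immediate instance of the general isomorphism in Proposition~\ref{p512}, which holds for any commutative monoid. For $r\geq 3$, I would invoke Corollary~\ref{c59} to get the natural isomorphism $H^{r+2}(C,r;\A)\cong H^{5}(C,3;\A)$, so the task reduces to identifying $H^{5}(C,3;\A)$ with $H^{3}_{^{_\mathrm{C}}}(C,\A)$ when $C$ is finite cyclic.

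The key step is supplied by Theorem~\ref{h23}, whose explicit computation of $H^{4}(C,2;\A)$ and $H^{5}(C,3;\A)$ via the contracted complex $\mathcal{R}$ produces the isomorphism $H^{4}(C,2;\A)\cong H^{5}(C,3;\A)$ with both sides described by the same subgroup of $\A(\wp(2))$. Chaining this with Proposition~\ref{p512} gives
\begin{equation*}
H^{5}(C,3;\A)\;\cong\; H^{4}(C,2;\A)\;\cong\; H^{3}_{^{_\mathrm{C}}}(C,\A),
\end{equation*}
which in turn combines with the isomorphism from Corollary~\ref{c59} to yield the required natural isomorphism for all $r\geq 3$. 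Naturality follows because every intermediate isomorphism is natural in $\A$: those of Corollaries~\ref{c58} and \ref{c59} come from naturally defined cochain identifications between the truncated complexes $C(M,r;\A)$, the isomorphism in Theorem~\ref{h23} comes from applying $\mathrm{Hom}_{\HH C}(-,\A)$ to the contraction of Theorem~\ref{amtheo1}, and Proposition~\ref{p512} is natural by construction.

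The main obstacle has already been overcome in Theorem~\ref{amtheo1} and its corollary Theorem~\ref{h23}: the contraction $f\colon \B(\ZZ C)\to\mathcal{R}$, $g\colon\mathcal{R}\to\B(\ZZ C)$ is what collapses the generically infinite-dimensional calculation for $H^{5}(C,3;\A)$ to the finite description matching $H^{4}(C,2;\A)$. So once Theorem~\ref{h23} is in hand, the present corollary is a two-line formal consequence and requires no further computation.
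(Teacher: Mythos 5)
Your proposal is correct and follows essentially the same route as the paper: Corollary \ref{c59} reduces the case $r\geq 3$ to $H^5(C,3;\A)$, Theorem \ref{h23} identifies this with $H^4(C,2;\A)$, and Proposition \ref{p512} finishes; the case $r=2$ is the direct instance of Proposition \ref{p512}, which the paper leaves implicit but you make explicit. No gaps.
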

\begin{proof}
By Corollary \ref{c59}, $H^{r+2}(C,r;\A)\cong H^5(C,3;\A)$, for any $r\geq 3$. Since, by Theorem \ref{h23}, $H^5(C,3;\A)\cong H^4(C,2,\A)$, the result follows by Proposition \ref{p512}.
\end{proof}

For instance, if $A$ is any abelian group viewed as a constant $\HH C $-module, then
$H^{4}(C,2;A)$ is isomorphic to the subgroup of $A$ consisting of those elements $b$ such that
$$
\left|\begin{array}{l}(m+q)^2b=m^2b,\\ 2qb=0, \end{array}  \right. \Leftrightarrow
\left|\begin{array}{l}(2mq+q^2)b=0,\\ 2qb=0, \end{array}  \right.
\Leftrightarrow
\left|\begin{array}{l}q^2b=0,\\ 2qb=0, \end{array}  \right. \Leftrightarrow (2q,q^2)\,b=0,
$$
where $(2q,q^2)=q(2,q)$ is the greatest common divisor of $2$ and $q$.
This leads to the following isomorphism,
which is analogous to the proven by Eilenberg- Mac Lane for the third abelian cohomology group
of the cyclic group $C_q$ with coefficients in $A$ \cite[\S 21]{E-M-II}.

\begin{corollary} For any finite cyclic monoid $C$, any integer $r\geq 2$,  and any abelian group $A$, there is a natural isomorphism
$$
H^{r+2}(C,r;A)\cong \text{\em Hom}_{\mathbf{Ab}}\big(\mathbb{Z}/(2q,q^2)\mathbb{Z},A\big).
$$
\end{corollary}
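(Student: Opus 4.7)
The plan is to assemble the chain of natural isomorphisms already established in the preceding computations with the elementary adjunction between torsion subgroups and homomorphisms out of cyclic groups. Nothing new about the bar construction is needed; everything has been done.

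First, I would invoke the previous corollary to reduce to the case $r=2$: it asserts the natural isomorphism $H^{r+2}(C,r;A)\cong H^3_{^{_\mathrm{C}}}(C,A)$ for every $r\geq 2$. Composing with the isomorphism $H^3_{^{_\mathrm{C}}}(C,A)\cong H^4(C,2;A)$ of Proposition \ref{p512}, it suffices to identify $H^4(C,2;A)$ with $\mathrm{Hom}_{\mathbf{Ab}}(\mathbb{Z}/(2q,q^2)\mathbb{Z},A)$.

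Second, the paragraph immediately preceding the corollary already carries out the specialization of Theorem \ref{h23} to a constant $\HH C$-module $A$ (where each translation $x_*$ acts as $\mathrm{id}_A$) and checks that the two defining conditions $(m+q)^2 b = m^2 b$ and $2(m+q)b = 2mb$ collapse to $q^2 b = 0$ and $2qb = 0$, i.e.\ to $(2q,q^2)\,b=0$, where $(2q,q^2)=q(2,q)$ is the gcd. Hence $H^4(C,2;A)$ is naturally isomorphic to the $(2q,q^2)$-torsion subgroup $A[(2q,q^2)]=\{b\in A\mid (2q,q^2)\,b=0\}$.

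Finally, one applies the standard natural isomorphism $A[n]\cong \mathrm{Hom}_{\mathbf{Ab}}(\mathbb{Z}/n\mathbb{Z},A)$, sending a torsion element $b$ with $nb=0$ to the unique homomorphism $\mathbb{Z}/n\mathbb{Z}\to A$ carrying $1+n\mathbb{Z}$ to $b$, with $n=(2q,q^2)$. Composing all of the above yields the asserted natural isomorphism. No step presents a genuine obstacle; the only verification that requires care is the collapse of the two torsion conditions to the single generator $(2q,q^2)$, and this is essentially the elementary number-theoretic observation that the ideal of $\mathbb{Z}$ generated by $\{2q,q^2\}$ equals $((2q,q^2))=(q(2,q))$.
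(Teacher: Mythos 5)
Your proposal is correct and follows essentially the same route as the paper: the paper obtains the result by combining the preceding corollary (which reduces $H^{r+2}(C,r;-)$ to $H^4(C,2;-)$ via Corollary \ref{c59}, Theorem \ref{h23} and Proposition \ref{p512}) with the computation in the paragraph just before the statement, where the two torsion conditions of Theorem \ref{h23} for a constant module are shown to collapse to $(2q,q^2)\,b=0$, and then identifies the $(2q,q^2)$-torsion subgroup with $\mathrm{Hom}_{\mathbf{Ab}}(\mathbb{Z}/(2q,q^2)\mathbb{Z},A)$. No discrepancies to report.
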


\subsection{Cohomology of the infinite cyclic monoid}$~$

In this subsection we focus on the additive monoid of natural numbers $C_\infty=\mathbb{N}$.  As before, we start by introducing a commutative DGA-algebra over $\HH C_\infty$,
$\mathcal{R}$, simpler than $\B(\ZZ C_\infty)$.

For each integer $k=0,1,\dots$, let us choose unitary sets over $ C_\infty$, $\{ w_0\}$ and $\{ v_k\}$, with $\pi w_0=1$ and $\pi v_k=k$. Then,
\begin{equation*}\left\{
\begin{array}{lcl}
\mathcal{R}_{0}&=& \text{the free $\HH C_\infty$-module on  $\{v_0\}$,}\\[4pt]
\mathcal{R}_{1}&=& \text{the free $\HH C_\infty$-module on $\{w_0\}$,} \\ [4pt]
\mathcal{R}_n  &=& 0, \ n\geq 2
\end{array}\right.
\end{equation*}
The differential $\partial=0$ is zero. The augmentation is the canonical isomorphism $\mathcal{R}_0\cong \Z$, and the multiplication
on $\mathcal{R}$ is by determined by the rules $v_0\circ v_0=v_0$, $v_0\circ w_0= w_0$ and $w_0\circ w_0=0$.

 \begin{theorem}\label{t01}
 There are DGA-algebra morphisms $f:\B(\ZZ C_\infty)\to \mathcal{R}$ and $g:\mathcal{R}\to \B(\ZZ C_\infty)$, determined by the formulas
 \begin{equation*}
\left\{\begin{array}{ccl}f[\ ]&=&v_0,\\
f[x]&=&x((x-1)_*w_0)  \end{array}\right.
  \end{equation*}

\begin{equation*}
\left\{\begin{array}{ccl}
        gv_0&=&[\ ],\\[4pt]
gw_0&=&[1],
\end{array}\right.
\end{equation*}
 which form a contraction.
\end{theorem}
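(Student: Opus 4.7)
The plan is to adapt the proof of Theorem \ref{amtheo1}, noting that the present setting is much simpler because $\mathcal{R}$ is concentrated in degrees $0$ and $1$ with zero differential; hence $f$ is forced to vanish on every generic cell of dimension $\geq 2$. The first step is to apply Proposition \ref{adfu} to check that the prescribed values extend uniquely to morphisms of $\HH C_\infty$-modules of the correct degree, which amounts to noting that $\pi(x((x-1)_*w_0)) = x$ and $\pi[1] = 1$. Compatibility with differentials is immediate: $\partial f[x] = 0 = f\partial[x]$, and on a $2$-cell, $f\partial[x|y] = (x - (x+y) + y)\bigl((x+y-1)_*w_0\bigr) = 0$, while $\partial g$ trivially agrees with $g\partial$.

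Multiplicativity of $f$ reduces, thanks to $\mathcal{R}_n = 0$ for $n \geq 2$, to the single nontrivial instance $f([x]\circ[y]) = 0 = f[x]\circ f[y]$, the right-hand side vanishing because $w_0 \circ w_0 = 0$. For $g$, the analogous checks are $g(v_0\circ w_0) = [1] = [\,]\circ[1] = gv_0\circ gw_0$ and $g(w_0\circ w_0) = 0 = [1]\circ[1] = [1|1]-[1|1]$. Finally, $fg = \mathrm{id}_{\mathcal{R}}$ follows from $fgv_0 = v_0$ and $fgw_0 = f[1] = 1\cdot(0_*w_0) = w_0$.

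The heart of the argument is the construction of a homotopy $\Phi$ of degree $+1$ on $\B(\ZZ C_\infty)$ satisfying $\partial\Phi + \Phi\partial = gf - \mathrm{id}$. I would mimic \eqref{defphi}:
\begin{equation*}
\Phi[\,] = 0,\qquad
\Phi[x] = \sum_{t=0}^{x-1}(x-t-1)_*[1\mid t],\qquad
\Phi[x\mid y\mid \sigma] = [\Phi[x]\mid y\mid \sigma],
\end{equation*}
where the second summand of the finite-case recursion vanishes automatically since $gf[x|y] = 0$ (it factors through $\mathcal{R}_2 = 0$). On $[x]$, a telescoping based on $\partial[1|t] = 1_*[t] - [1+t] + t_*[1]$ together with the normalization $[0]=0$ collapses two sums into $-[x]$ and leaves the residue $x\cdot(x-1)_*[1] = gf[x]$. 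On a $2$-cell one telescopes again in the slot containing $y$, and the three terms arising from $\Phi\partial[x|y]$ reindex to cancel against the leftover terms of $\partial\Phi[x|y]$. For cells of dimension $\geq 3$, the inductive step follows the final display in the proof of Theorem \ref{amtheo1}, much simplified by the vanishing of $gf$ in dimensions $\geq 2$.

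The conditions in \eqref{contr2} are then essentially free: $\Phi g = 0$ because $\Phi[\,] = 0$ and $\Phi[1] = 0_*[1|0] = 0$ by normalization; $f\Phi = 0$ because every value of $\Phi$ is a combination of cells of dimension $\geq 2$, on which $f$ vanishes; and $\Phi\Phi = 0$ because any $\Phi$-image has leading entry $1$, so applying $\Phi$ a second time produces a factor $[\Phi[1]|\cdots] = 0$. The main technical obstacle will be the $2$-cell case of the homotopy identity, where three index-shifted sums must be combined carefully; however this is a bookkeeping exercise without conceptual difficulty.
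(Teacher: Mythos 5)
Your proposal is correct and takes essentially the same route as the paper: the paper's proof merely exhibits the identical homotopy $\Phi$ (with $\Phi[x\mid\sigma]=[\Phi[x]\mid\sigma]$, the $[gf[x\mid y]\mid\Phi[\sigma]]$ term of the finite-case recursion \eqref{defphi} dropping out exactly as you observe) and declares all remaining verifications parallel to, and much simpler than, those of Theorem \ref{amtheo1}. Your write-up simply supplies the telescoping computations and the checks of \eqref{contr2} that the paper leaves implicit, and they are all sound.
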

\begin{proof}
It is plain to see that above assignments determine well defined morphisms of DGA-algebras over $\HH C_\infty$.
To prove that they form a contraction, we limit ourselves to describe the homotopy $\Phi:gf\Rightarrow id$, by the formula below, because the details are parallel and much more simpler than those in the proof of Theorem \ref{amtheo1}.
 \begin{equation*}\left\{
 \begin{array}{l}
 \Phi[\ ]=0,\\[4pt]
  \Phi[x]=\sum\limits_{0\leq t<x}(x-t-1)_*[1\mid t],
   \\
  \Phi[x\mid \sigma]=[\Phi[x]\mid \sigma],
 \end{array}\right.
\end{equation*}
with $\sigma$ any cell of dimension greater than 1.
\end{proof}

 By Proposition \ref{h1l},  there are isomorphisms $H^n(C_\infty,1;\A)\cong H^n_{^{_\mathrm{L}}}(C_\infty,\A)$, for any $\HH C_\infty$-module $\A$. Then, as consequence of Theorem \ref{t01},
we recover the computation by Leech of the cohomology groups of the monoid $C_\infty$ \cite[Theorem 6.8]{leech}.
\begin{proposition}
For any $\HH C_\infty$-module $\A$,  there are natural isomorphisms
 \begin{equation*}
  H^0_{^{_\mathrm{L}}}(C_\infty,\A)\cong \A(0), \ H^1_{^{_\mathrm{L}}}(C_\infty,\A)\cong \A(1),
 \end{equation*}
and for every $n\geq 2$,
$
H^n_{^{_\mathrm{L}}}(C_\infty,\A)=0.
$
\end{proposition}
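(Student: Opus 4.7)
The plan is to reduce the computation to the very simple complex $\mathcal{R}$ by combining three earlier ingredients: Proposition \ref{h1l} (which identifies Leech cohomology with first-level cohomology), Theorem \ref{t01} (which supplies a contraction of $\B(\ZZ C_\infty)$ onto $\mathcal{R}$), and Proposition \ref{adfu} (which identifies the Hom groups out of a free $\HH C_\infty$-module).

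First, by Proposition \ref{h1l}, there are natural isomorphisms
$$H^n_{^{_\mathrm{L}}}(C_\infty,\A)\cong H^n(C_\infty,1;\A)=H^n\bigl(\mathrm{Hom}_{\HH C_\infty}(\B(\ZZ C_\infty),\A)\bigr),$$
so the problem is to compute the cohomology of the cochain complex $\mathrm{Hom}_{\HH C_\infty}(\B(\ZZ C_\infty),\A)$.

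Next I would invoke Theorem \ref{t01}. The morphisms $f:\B(\ZZ C_\infty)\to\mathcal{R}$ and $g:\mathcal{R}\to\B(\ZZ C_\infty)$ form a contraction, meaning $fg=\mathrm{id}_{\mathcal{R}}$ and there is a chain homotopy $\Phi:gf\Rightarrow \mathrm{id}_{\B(\ZZ C_\infty)}$. Applying the functor $\mathrm{Hom}_{\HH C_\infty}(-,\A)$ yields cochain maps $f^*$ and $g^*$ with $f^*g^*=\mathrm{id}$ and $g^*f^*$ cochain homotopic to the identity via the dualized $\Phi^*$. Hence $f^*$ and $g^*$ induce mutually inverse isomorphisms
$$H^n\bigl(\mathrm{Hom}_{\HH C_\infty}(\B(\ZZ C_\infty),\A)\bigr)\cong H^n\bigl(\mathrm{Hom}_{\HH C_\infty}(\mathcal{R},\A)\bigr).$$

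Finally I compute the right-hand side directly. By construction, $\mathcal{R}_0$ is the free $\HH C_\infty$-module on the one-element set $\{v_0\}$ with $\pi v_0=0$, $\mathcal{R}_1$ is the free $\HH C_\infty$-module on $\{w_0\}$ with $\pi w_0=1$, and $\mathcal{R}_n=0$ for $n\geq 2$. By Proposition \ref{adfu}, there are natural isomorphisms $\mathrm{Hom}_{\HH C_\infty}(\mathcal{R}_0,\A)\cong\A(0)$ and $\mathrm{Hom}_{\HH C_\infty}(\mathcal{R}_1,\A)\cong\A(1)$, and the higher cochain groups vanish. Since the differential of $\mathcal{R}$ is identically zero, the coboundary of the Hom complex is also zero, and the cohomology equals the cochain groups themselves: $H^0\cong\A(0)$, $H^1\cong\A(1)$, and $H^n=0$ for $n\geq 2$. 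Combining with the previous identifications yields the claimed isomorphisms.

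There is no serious obstacle: the only substantive point is the standard observation that a contraction of complexes dualizes to a cochain contraction and therefore induces isomorphisms on cohomology. All genuine combinatorial work has been absorbed into Theorem \ref{t01}.
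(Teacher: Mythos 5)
Your proposal is correct and follows exactly the route the paper takes: identify $H^n_{^{_\mathrm{L}}}(C_\infty,\A)$ with $H^n(C_\infty,1;\A)$ via Proposition \ref{h1l}, use the contraction of Theorem \ref{t01} (dualized along $\mathrm{Hom}_{\HH C_\infty}(-,\A)$) to pass to $\mathcal{R}$, and read off the cohomology of the zero-differential complex $\A(0)\to\A(1)\to 0\to\cdots$ using Proposition \ref{adfu}. No gaps; this is the paper's argument.
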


 We now pay attention to the second level cohomology groups  of $C_\infty$. By Theorem \ref{t01} and Lemma \ref{iterate},
$
H^n(C_\infty,2;\A)\cong H^n\big(\mathrm{Hom}_{\HH C_\infty}(\B(\mathcal{R}),\A)\big)
$.
An analysis of $\B(\mathcal{R})$ tell us that
\begin{equation*}\left\{
\begin{array}{lcl}
\B(\mathcal{R})_{2k}&=& \text{the free $\HH C_\infty$-module on } \{v_k\},\\[4pt]
\B(\mathcal{R})_{2k+1}&=&0,
\end{array}\right.
\end{equation*}
where, recall,  $\pi v_k=k$; the augmentation is
 the canonical isomorphism $\B(\mathcal{R})_0\cong \Z$ and the product is given by
$$v_k\circ
v_l=\binom{k+l}{k}v_{k+l}.$$

 Hence,
 \begin{proposition}
 For any $\HH C_\infty$-module $\A$, and any integer $k\geq 0$,
 $$
 H^{2k}(C_\infty,2;\A)\cong \A(k), \  \  H^{2k+1}(C_\infty,2;\A)= 0.
 $$
\end{proposition}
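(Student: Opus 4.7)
The plan is to combine the contraction of Theorem \ref{t01} with Lemma \ref{iterate}: applying $\B$ to $f:\B(\ZZ C_\infty)\to \mathcal{R}$ and $g:\mathcal{R}\to \B(\ZZ C_\infty)$ produces a contraction between $\B^2(\ZZ C_\infty)$ and $\B(\mathcal{R})$ as complexes of $\HH C_\infty$-modules. Consequently $\mathrm{Hom}_{\HH C_\infty}(\B^2(\ZZ C_\infty),\A)$ and $\mathrm{Hom}_{\HH C_\infty}(\B(\mathcal{R}),\A)$ have naturally isomorphic cohomology, so
$$
H^n(C_\infty,2;\A)\cong H^n\bigl(\mathrm{Hom}_{\HH C_\infty}(\B(\mathcal{R}),\A)\bigr).
$$

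Next I would observe that the differential of $\B(\mathcal{R})$ is identically zero. On one hand, because $\mathcal{R}$ has zero differential, the tensor differential $\partial^{\otimes}$ on each $\T^p\s\bar{\mathcal{R}}$ vanishes. On the other hand, since $\bar{\mathcal{R}}_0=0$ (the generator $v_0$ is precisely the image of the unit, so is killed in the reduced complex), the only generic cells of $\B(\mathcal{R})$ are of the form $[w_0|w_0|\cdots|w_0]$, and the $\partial^\circ$-boundary of such a cell is a sum of terms involving either products $w_0\circ w_0$, which vanish by the definition of $\mathcal{R}$, or coefficients $\tilde{\epsilon}(w_0)$, which also vanish since $\epsilon$ is trivial in positive degree. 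Hence $\partial^\circ=0$ as well, confirming the description stated in the excerpt: $\B(\mathcal{R})_{2k}$ is the free $\HH C_\infty$-module on $\{v_k\}$ (with $\pi v_k=k$), $\B(\mathcal{R})_{2k+1}=0$, and the differential is trivial.

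Since the differential is zero, so is every coboundary in the cochain complex $\mathrm{Hom}_{\HH C_\infty}(\B(\mathcal{R}),\A)$, and the cohomology agrees with the cochain groups themselves. Invoking the adjunction of Proposition \ref{adfu} applied to the free $\HH C_\infty$-module on the single generator $v_k$ over $k\in C_\infty$, we get
$$
\mathrm{Hom}_{\HH C_\infty}(\B(\mathcal{R})_{2k},\A)\cong \A(\pi v_k)=\A(k),
\qquad \mathrm{Hom}_{\HH C_\infty}(\B(\mathcal{R})_{2k+1},\A)=0,
$$
which yields the two stated isomorphisms. There is no real obstacle here; the only point requiring a little care is verifying that both summands $\partial^{\otimes}$ and $\partial^\circ$ of the bar differential vanish on $\B(\mathcal{R})$, and for that the crucial inputs are $\bar{\mathcal{R}}_0=0$ and the multiplication rule $w_0\circ w_0=0$, both of which are built into the construction of $\mathcal{R}$.
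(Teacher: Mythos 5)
Your proposal is correct and follows the paper's own route: invoke Lemma \ref{iterate} on the contraction of Theorem \ref{t01} to replace $\B^2(\ZZ C_\infty)$ by $\B(\mathcal{R})$, then observe that $\bar{\mathcal{R}}_0=0$, $w_0\circ w_0=0$, and $\epsilon$ vanishing in positive degree force $\B(\mathcal{R})$ to be concentrated in even degrees with zero differential, so the cohomology groups are read off from Proposition \ref{adfu}. The paper compresses this into the phrase ``an analysis of $\B(\mathcal{R})$ tells us,'' and your write-up simply supplies the details of that analysis.
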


From Corollary \ref{c58}, it follows that

\begin{corollary}  For any $\HH C_\infty$-module $\A$, and any integer $r\geq 2$,
$$H^{r+1}(C_\infty,r;\A)=0.$$
\end{corollary}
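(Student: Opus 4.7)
The plan is to invoke two results already established in the paper and chain them together, so the proof should be essentially a one-line observation.

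First, I would apply Corollary \ref{c58}, which asserts that for any commutative monoid $M$, any $\HH M$-module $\A$, and any $r\geq 2$, there is a natural isomorphism
\[
H^{r+1}(M,r;\A)\cong H^{3}(M,2;\A).
\]
Specializing $M=C_\infty$, this reduces the problem to showing $H^{3}(C_\infty,2;\A)=0$.

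Second, I would invoke the Proposition immediately preceding the statement, which computes the second level cohomology of $C_\infty$ as $H^{2k}(C_\infty,2;\A)\cong \A(k)$ and $H^{2k+1}(C_\infty,2;\A)=0$ for every $k\geq 0$. Taking $k=1$ gives $H^{3}(C_\infty,2;\A)=0$, and combining with the isomorphism above yields the desired vanishing $H^{r+1}(C_\infty,r;\A)=0$ for all $r\geq 2$.

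There is no real obstacle here: the content is entirely packaged in the two previous results, with the vanishing of the odd-degree second level cohomology of $C_\infty$ coming from the fact that the simplified DGA-algebra $\B(\mathcal{R})$ is concentrated in even degrees (since the underlying $\mathcal{R}$ has trivial differential and its reduced part is concentrated in degree $1$, so all tensor powers entering $\B(\mathcal{R})_n$ have degree $2p$ for some $p$). The only thing worth stressing in the write-up is the reference chain: Corollary \ref{c58} transports the question from level $r$ down to level $2$, where the preceding direct computation applies.
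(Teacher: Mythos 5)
Your proof is correct and is exactly the argument the paper uses: the corollary is stated there as an immediate consequence of Corollary \ref{c58} together with the preceding proposition, which gives $H^{3}(C_\infty,2;\A)=H^{2\cdot 1+1}(C_\infty,2;\A)=0$. Nothing further is needed.
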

 We finish by specifying the 3rd level 5-cohomology group of $C_\infty$.

 \begin{proposition} For any $\HH C_\infty$-module $\A$, and any integer $r\geq 3$, there is a natural isomorphism
 $$
 H^{r+2}(C_\infty,r;\A)\cong \{a\in \A(2)\ | \ 2a=0\}.
 $$
 \end{proposition}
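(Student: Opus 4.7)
The plan is to reduce the computation to the cohomology of a very small chain complex via the contractions already constructed. First, by Corollary \ref{c59}, $H^{r+2}(C_\infty,r;\A) \cong H^5(C_\infty,3;\A)$ for every $r \geq 3$, so only the case $r=3$ needs to be computed. Applying Lemma \ref{iterate} twice to the DGA-algebra contraction between $\B(\ZZ C_\infty)$ and $\mathcal{R}$ produced in Theorem \ref{t01}, one obtains a contraction between $\B^3(\ZZ C_\infty)$ and $\B^2(\mathcal{R})$. Hence $H^5(C_\infty,3;\A)$ is the fifth cohomology group of $\mathrm{Hom}_{\HH C_\infty}(\B^2(\mathcal{R}),\A)$.

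Next I would compute $\B^2(\mathcal{R})$ in degrees $4$, $5$, and $6$. The key observation is that $\B(\mathcal{R})$ has \emph{zero} differential: indeed $\mathcal{R}$ has zero differential and $w_0\circ w_0 = 0$ in $\mathcal{R}$, so both $\partial^\otimes$ and $\partial^\circ$ vanish on the first bar. Thus (as the paper already records) $\B(\mathcal{R})$ is the DGA-algebra concentrated in even degrees, free on the generators $v_k$ ($k\geq 0$) with $\pi v_k = k$ and multiplication $v_k \circ v_l = \binom{k+l}{k}v_{k+l}$. Consequently the generic $n$-cells of $\B^2(\mathcal{R})$ are strings $[v_{k_1}|_{_2}\cdots|_{_2}v_{k_p}]$ with each $k_i\geq 1$ and $p + 2\sum k_i = n$. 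Enumerating these gives $\B^2(\mathcal{R})_4 = 0$, $\B^2(\mathcal{R})_5$ free on the single cell $[v_2]$ (with $\pi = 2$), and $\B^2(\mathcal{R})_6$ free on the single cell $[v_1|_{_2}v_1]$ (with $\pi = 2$).

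The differential on $\B^2(\mathcal{R})$ reduces to $\partial^\circ$, and on $[v_1|_{_2}v_1]$ only the shuffle-product middle term is nonzero, since $\tilde{\epsilon}(v_1)=0$. Using $v_1\circ v_1 = 2v_2$ together with the sign $(-1)^{e_1}=-1$ gives $\partial[v_1|_{_2}v_1] = -2[v_2]$. By Proposition \ref{adfu}, $\mathrm{Hom}_{\HH C_\infty}(\B^2(\mathcal{R})_n,\A)\cong \A(\pi)$ for each of the three generic cells above, so the relevant piece of the cochain complex is $0 \to \A(2)\xrightarrow{\,-2\,}\A(2)$. Since $\B^2(\mathcal{R})_4 = 0$ there is no image to quotient out, and the fifth cohomology equals $\ker(2\colon \A(2)\to\A(2)) = \{a\in \A(2)\mid 2a = 0\}$, as required.

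The only subtlety is the combinatorial bookkeeping: one must verify that the listed three generic cells really exhaust $\B^2(\mathcal{R})$ in degrees $\leq 6$, and that the binomial coefficient $\binom{2}{1}=2$ coming from the shuffle of two copies of $v_1$ in $\B(\mathcal{R})$ is precisely what produces the resulting $2$-torsion condition. Everything else is formal.
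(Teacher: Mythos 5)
Your proposal is correct and follows essentially the same route as the paper: reduce to $H^5(C_\infty,3;\A)$ via Corollary \ref{c59}, iterate the contraction of Theorem \ref{t01} through Lemma \ref{iterate}, and read off $\B^2(\mathcal{R})$ in degrees $4$--$6$ as $0$, $\Z\{[v_2]\}$, $\Z\{[v_1|_{_2}v_1]\}$ with $\partial[v_1|_{_2}v_1]=-2v_2$. You in fact supply the combinatorial bookkeeping (the enumeration of generic cells and the shuffle coefficient $\binom{2}{1}=2$) that the paper leaves as ``an analysis of $\B^2(\mathcal{R})$ tells us.''
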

\begin{proof}
 By Corollary \ref{c59}, $H^{r+2}(C_\infty,r;\A)\cong H^{5}(C_\infty,3;\A)$.
 An analysis of $\B^2(\mathcal{R})$ tell us that  $\B^2(\mathcal{R})_4=\B(\mathcal{R})_3=0$,
 $\B^2(\mathcal{R})_5=\B(\mathcal{R})_4$ is the free $\HH C_\infty$-module on $\{v_2\}$, where $\pi v_2=2$,  $\B^2(\mathcal{R})_6$ is the free $\HH C_\infty$-module on $\{[v_1\mid\!\mid v_1]\}$, with $\pi[v_1\mid\!\mid v_1]=2$,
 and the differential is
 $$ \partial[v_1\mid\!\mid v_1]=-2v_2.
 $$
Whence, for any $\HH C_\infty$-module $\A$, $H^{5}(C_\infty,3;\A)\cong \{a\in \A(2)\ | \ 2a=0\}$.
 \end{proof}


\begin{thebibliography}{99}

\bibitem{c-c-h} M. Calvo-Cervera, A.M.  Cegarra, B. A. Heredia, Structure and classification of monoidal groupoids, Semigroup Forum 87 (2013) 35-79.


\bibitem{c-c-1} M. Calvo-Cervera, A.M.  Cegarra,  A Cohomology Theory for Commutative Monoids, Mathematics 3 (2015) 1001-1031.

\bibitem{c-c-h2} M. Calvo-Cervera, A.M. Cegarra, B.A. Heredia, On the third cohomology group of a commutative monoid,  Semigroup Forum,  forthcomming paper. Published first on line doi:10.1007/s00233-015-9696-2.

\bibitem{c-c-2} M. Calvo-Cervera, A.M. Cegarra, Computability of the (co)homology of cyclic monoids, preprint available at http://arxiv.org/abs/1602.01272


\bibitem{c-c-3} M. Calvo-Cervera, A.M. Cegarra, Higher cohomologies of commutative monoids: Simplicial representability, preprint.


\bibitem{cegarra2} A.M. Cegarra, E. Khadmaladze, Homotopy classification of braided
graded categorical groups, J. Pure Appl. Algebra 209 (2007) 411-437.

\bibitem{E-M-I} S. Eilenberg, S. MacLane, On the groups $H(\pi,n)$ I, Ann. of Math.  58 (1953) 55-106.

\bibitem{E-M-II} S. Eilenberg, S. MacLane, On the groups $H(\pi,n)$ II. Ann. of Math. 70 (1954) 49-137.

\bibitem{Frobenius} F.G. Frobenius,  \"{U}ber endliche Gruppen, Sitzungsber. Preuss. Akad. Wiss. Berlin (1985) 163-194.

\bibitem{G-Z} P. Gabriel, M. Zisman, Calculus of Fractions and Homotopy Theory,
Springer, Berlin, 1967.

\bibitem{grillet91} P.A. Grillet, Commutative semigroup cohomology. Semigroup Forum 43 (1991) 247–252.

\bibitem{grillet} P.A. Grillet, Commutative Semigroups (Advances in Mathematics), Kluwer Academic Publisher, Philip Drive Norwell, 2001.

\bibitem{hig} P.J. Higgings, Categories and groupoids, Reprints in Theory Appl. Categ.  7, 2005.

\bibitem{j-s} A. Joyal, R. Street, Braided tensor categories, Adv. Math. 82 (1991) 20-78.

\bibitem{kur-Pir} R. Kurdiani, T. Pirashvili, Functor homology and homology of commutative monoids,  Semigroup Forum 92 (2016) 102-120.

\bibitem{leech} J. Leech, Two papers: $\mathcal{H}$-coextensions of monoids and the structure
of a band of groups. Memoirs A.M.S. 157, 1975.

\bibitem{Mac2} S. Mac Lane, Natural associativity and commutativity,
Rice University Studuies 49 (1963) 28-46.

\bibitem{Mac3} S. Mac Lane, Categories for the Working Mathematician, 2nd edition, Graduate Texts in Mathematics 5, Springer, New York, 1998.

\bibitem{maclane} S. Mac Lane,  Homology, Springer Science \& Business Media, 2012.

\bibitem{Saa} N. Saavedra, Cat\'egories tannakiennes, Lect. Notes in Math. 265, Springer-Verlag, 1972.

\bibitem{sinh} H.X. Sinh, Gr-cat\'egories. Th\'ese de Doctorat, Universit\'e Paris VII, 1975.

\bibitem{wells} C. Wells, Extension theories for monoids, Semigroup Forum 16 (1978)
13-35.
\end{thebibliography}
\end{document}